\documentclass[11pt]{amsart}
\addtolength{\oddsidemargin}{-.5in}
\addtolength{\evensidemargin}{-.5in}
\addtolength{\textwidth}{1.0in} 
\usepackage{url}
\usepackage[cmtip, all]{xy}


\usepackage[export]{adjustbox}
\usepackage{graphicx}

\usepackage{xcolor}

\theoremstyle{plain}
\newtheorem{thm}{Theorem}[section]
\newtheorem{theorem}[thm]{Theorem}

\newtheorem{lemma}[thm]{Lemma}
\newtheorem{corollary}[thm]{Corollary}
\newtheorem{proposition}[thm]{Proposition}
\newtheorem{lemma-definition}[thm]{Lemma-Definition}
\theoremstyle{definition}
\newtheorem{remark}[thm]{Remark}

\newtheorem{definition}[thm]{Definition}
\newtheorem{claim}[thm]{Claim}

\newtheorem{example}[thm]{Example}

\newtheorem{conjecture}[thm]{Conjecture}

\newtheorem{question}[thm]{Question}

\numberwithin{equation}{section}


\newcommand{\ml}[2]{\begin{multline}\label{#1}#2 \end{multline}}
\newcommand{\ga}[2]{\begin{gather}\label{#1}#2 \end{gather}}
\newcommand{\gr}{{\rm gr}}

\newcommand{\Pic}{{\rm Pic}}

\newcommand{\Spec}{{\rm Spec \,}}




\newcommand{\sC}{{\mathcal C}}
\newcommand{\sD}{{\mathcal D}}
\newcommand{\sE}{{\mathcal E}}
\newcommand{\sF}{{\mathcal F}}

\newcommand{\sL}{{\mathcal L}}

\newcommand{\sO}{{\mathcal O}}

\newcommand{\sS}{{\mathcal S}}

\newcommand{\sU}{{\mathcal U}}
\newcommand{\sV}{{\mathcal V}}

\newcommand{\A}{{\mathbb A}}

\newcommand{\C}{{\mathbb C}}

\newcommand{\F}{{\mathbb F}}
\newcommand{\G}{{\mathbb G}}

\newcommand{\N}{{\mathbb N}}

\newcommand{\Q}{{\mathbb Q}}

\newcommand{\Z}{{\mathbb Z}}

\newcommand{\id}{{\rm id\hspace{.1ex}}}


\DeclareMathOperator{\Ab}{\mathbb{A}}

\DeclareMathOperator{\Sym}{\mathsf{Sym}}

\DeclareMathOperator{\Eend}{\underline{\mathsf{End}}}

\DeclareMathOperator{\Oo}{\mathcal{O}}

\DeclareMathOperator{\Higgs}{\mathsf{Higgs}}
\DeclareMathOperator{\MIC}{\mathsf{MIC}}

\DeclareMathOperator{\MdR}{\mathcal{M}_{dR}}
\DeclareMathOperator{\MdRrig}{\mathcal{M}_{dR}^{rig}}

\DeclareMathOperator{\MDol}{\mathcal{M}_{Dol}}
\DeclareMathOperator{\MDolrig}{\mathcal{M}_{Dol}^{rig}}

\DeclareMathOperator{\MB}{\mathcal{M}_B}
\DeclareMathOperator{\MBrig}{\mathcal{M}_{B}^{rig}}

\DeclareMathOperator{\MHod}{\mathcal{M}_{Hod}}
\DeclareMathOperator{\MHodrig}{\mathcal{M}_{Hod}^{rig}}

\DeclareMathOperator{\Cb}{\mathbb{C}}
\DeclareMathOperator{\Zb}{\mathbb{Z}}
\DeclareMathOperator{\Qb}{\mathbb{Q}}
\DeclareMathOperator{\Pb}{\mathbb{P}}
\DeclareMathOperator{\Fb}{\mathbb{F}}

\DeclareMathOperator{\Ee}{\mathcal{E}}

\DeclareMathOperator{\Isoc}{\mathsf{Isoc}}

\title [Rigid connections and $F$-isocrystals]{Rigid connections and $F$-isocrystals}
\author{H\'el\`ene Esnault \and Michael Groechenig} 
\address{Freie Universit\"at Berlin, Arnimallee 3, 14195, Berlin,  Germany}
\email{esnault@math.fu-berlin.de}
\address{University of Toronto, 40 St. George Street, Toronto Ontario M5S 2E4, Canada}
\email{michael.groechenig@utoronto.ca}
\thanks{The first  author is supported by  the Einstein program and the ERC
Advanced
Grant 226257, the second author was supported by a Marie Sk\l odowska-Curie fellowship: This project has received funding from the European Union's Horizon 2020 research and innovation programme under the Marie Sk\l odowska-Curie Grant Agreement No. 701679.  \\ \includegraphics[height = 1cm,right]{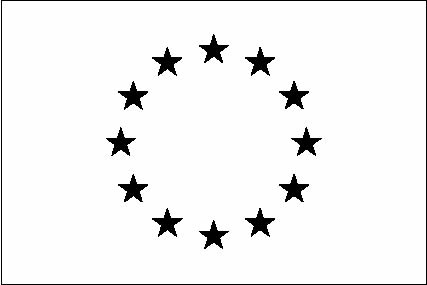}
}

\date{}
\begin{document}
\begin{abstract}
{An irreducible integrable connection $(E,\nabla)$ on a smooth projective complex variety $X$ is called rigid if it gives rise to an isolated point of the corresponding moduli space $\MdR(X)$.} According to Simpson's motivicity conjecture, {irreducible} rigid flat connections are of geometric origin, that is, arise as subquotients of a Gau\ss-Manin connection of a family of smooth projective varieties defined on an open dense subvariety of $X$. In this article we study mod $p$ reductions of {irreducible} rigid connections and establish results which confirm Simpson's prediction. In particular, for large $p$, we prove that $p$-curvatures of mod $p$ reductions of {irreducible} rigid flat connections are nilpotent, and building on this result, we construct an $F$-isocrystalline realization for {irreducible} rigid flat connections. More precisely, we prove that there exist smooth models $X_R$ and $(E_R,\nabla_R)$ of $X$ and $(E,\nabla)$, over a finite type ring $R$, such that for every Witt ring $W(k)$ of a finite field $k$ and every homomorphism $R \to W(k)$, the $p$-adic completion of the base change $(\widehat{E}_{W(k)},\widehat{\nabla}_{W(k)})$ on $\widehat{X}_{W(k)}$ represents an $F$-isocrystal. Subsequently we show that {irreducible} rigid flat connections with vanishing $p$-curvatures are unitary. This allows us to prove new cases of the Grothendieck--Katz $p$-curvature conjecture. We also prove the existence of a complete companion correspondence
for $F$-isocrystals stemming from {irreducible} cohomologically rigid connections.
\end{abstract}
\maketitle

\tableofcontents

\section{Introduction}\label{intro}

This article is concerned with a conjecture of Simpson about irreducible flat connections $(E,\nabla)$ on a smooth projective variety $X/\Cb$. In \cite[Theorem 4.7]{Sim94}, Simpson constructed a quasi-projective moduli space $\MdR(X,r)$ of irreducible flat connections $(E,\nabla)$ of rank $r$ on $X$. It follows that for a {rank one flat connection  $L=(\sL, \nabla_{\sL})$} one has a moduli space $\MdR(X,L,r)$ {of}  irreducible flat connections $(E,\nabla)$ of rank $r$ on $X$, together with an isomorphism  {${\rm det} (E,\nabla)\simeq L$.} 

\begin{definition} \label{def:rigid}
An irreducible rank $r$ flat connection $(E,\nabla)$ with determinant line bundle $L$ is \emph{rigid} if the corresponding point of the moduli space $[(E,\nabla)] \in \MdR(X,L,r)$ is isolated.  
\end{definition}

\begin{remark}\label{rmk:standing}
Henceforth the term \emph{rigid connection} refers to a {stable} flat connection which satisfies the assumptions of Definition \ref{def:rigid}. {For the field of complex numbers, stability is equivalent to irreducibility.} Furthermore, we shall assume that $L$ {is \emph{torsion}} on $X$.
\end{remark}

Amongst the irreducible flat connections on $X$ there are specimens which stand out with particularly interesting properties: flat connections of \emph{geometric origin}. The latter are precisely subquotients of Gau\ss-Manin connections, {that is, with underlying local system a summand of} $R^if_*\Cb$, where $f\colon Y \to U$ is a smooth projective morphism, with a dense open subvariety $U \subset X$ as target. According to a conjecture by Simpson (see \cite[p. 9]{Sim92}) rigid flat connections are expected to possess this property.

\begin{conjecture}[Simpson's Motivicity Conjecture] \label{conj:mot}
A rigid flat connection $(E,\nabla)$ on $X$ with torsion determinant line bundle is of \emph{geometric origin}.
\end{conjecture}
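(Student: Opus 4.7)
Since the Motivicity Conjecture is a major open problem, what follows is a strategy rather than a complete proof; I will explain where I expect any honest attempt to break down. The plan is to bootstrap from the arithmetic structure that the body of the paper equips $(E,\nabla)$ with --- $F$-isocrystal structure on mod $p$ reductions, complete companion system, and Simpson's $\C$-VHS at the archimedean place --- toward an actual family of varieties whose cohomology realizes $(E,\nabla)$.

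\emph{Assembly of realizations.} First spread out $(E,\nabla)$ over a finitely generated $\Z$-subalgebra $R\subset\C$, carrying with it the $\C$-VHS produced by Simpson's theorem, so that the Hodge filtration and the Griffiths transversal grading extend arithmetically. At each closed point $s\in\Spec R$, the paper equips the reduction with the structure of an $F$-isocrystal, and, for cohomologically rigid $(E,\nabla)$, the companion correspondence yields for every auxiliary prime $\ell$ a compatible lisse $\overline{\Q}_\ell$-sheaf $L_{s,\ell}$ on $X_s$. Assembling these as $s$ varies produces a coherent system of realizations which is, by construction, tightly constrained by rigidity and whose Hodge-theoretic shadow at the archimedean place matches the VHS.

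\emph{From realizations to geometry.} One would then try to realize this system by an actual smooth projective family $f:Y\to X$ (or over a dense open of $X$) whose relative cohomology contains $(E,\nabla)$ as a direct summand. A first route is to exploit the $\C$-VHS to produce Hodge classes from the weight grading and to use the companions to control their specializations, hoping to cut out a component of a period-domain parameter space carrying the required family. A second route, more amenable in low rank or on curves, is to restrict to a Lefschetz pencil $C\subset X$, invoke the function-field Langlands correspondence (Drinfeld, L.~Lafforgue, Abe) to realize $L_{s,\ell}|_{C_s}$ by automorphic forms, and then try to lift the associated Shimura- or modular-variety construction back to $X$.

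\emph{Main obstacle.} Both routes crash into the same conceptual difficulty: there is at present no general procedure for converting a compatible system of realizations, however rich, into an algebraic family of varieties; this is the essential content of the Fontaine--Mazur philosophy, and even the best-understood cases (two-dimensional motivic Galois representations of number fields) require ideas far beyond bookkeeping with $p$-adic and $\ell$-adic data. I would accordingly expect the geometrization step to be where any attempt stalls, and a realistic programme would first tackle rank-two rigid connections on curves, where the automorphic side is best controlled and the relevant Shimura varieties might plausibly supply the desired family.
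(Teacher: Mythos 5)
This statement is Simpson's Motivicity Conjecture; the paper does not prove it, and explicitly states that it ``seems still out of reach by the present state of knowledge.'' The paper's actual content consists of establishing consequences that the conjecture predicts ($F$-isocrystal structure with Frobenius on mod $p$ reductions, companions for cohomologically rigid connections, unitarity under vanishing $p$-curvature), not the conjecture itself. Your proposal is therefore correctly framed as a programme rather than a proof, and you rightly identify the geometrization step --- passing from a compatible system of realizations to an actual family of varieties --- as the point where any such attempt currently stalls. It is worth noting that your ``second route'' is essentially what the authors do carry out in partial form: Corollary~\ref{cor:mot} restricts to a smooth complete intersection curve $C\subset X$, uses irreducibility via Lefschetz, and invokes Abe's Langlands correspondence for isocrystals to realize the $F$-isocrystal as a subquotient of a Gau{\ss}-Manin $F$-overconvergent isocrystal over a dense open of $C_s$; Section~\ref{ss:abe2} then poses the extension from $C_s$ to $X_s$ as an open problem, and Section~\ref{ss:LS} records the one case (cohomologically rigid $SL(3)$-connections, via Langer--Simpson plus integrality) where full geometric origin is known. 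So your sketch is consistent with the paper's strategy and with the state of the art, but, as you acknowledge, it does not constitute a proof of the conjecture, and neither does the paper claim one.
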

 
For now this remains out of reach, yet there is a lot of supporting evidence:
\begin{enumerate}
\item Non-abelian Hodge theory implies that rigid flat connections give rise to { \emph{complex variations of Hodge structure} }on $X$. This was observed by Simpson in \cite[Section 4]{Sim92}. We refer the reader to Section \ref{sec:proofthmnilp}, where we give a short summary of Simpson's argument and also explain the connection with the present work. 
\item Non-abelian Hodge theory methods were used by Corlette--Simpson \cite{CS08} and more recently Simpson--Langer \cite{LS16}, to establish the Motivicity Conjecture for rigid flat connections with topological monodromy defined over the ring of algebraic integers $\bar{\Zb}$ of rank $2$, respectively $3$.
\item For the case of rigid flat connections on $\Pb^1$ minus finitely many points, there is a complete classification due to Katz \cite{Kat96} which implies Simpson's conjecture in this case. 
\end{enumerate}

It was shown by Katz (see \cite[Theorem 10.0]{Kat70} and \cite[3.1]{Kat72}) that Gau\ss-Manin connections in characteristic $p$ have nilpotent $p$-curvatures. This implies that their subquotients,   {which are by definition}  flat connections of geometric origin, also have nilpotent $p$-curvatures. We will recall the basics on $p$-curvature in Subsection \ref{sub:p-curvature}, and in particular explain Katz's theorem in the discussion above Theorem \ref{thm:katz}. 

Simpson's conjecture therefore predicts that mod $p$ reductions of rigid flat connections have nilpotent $p$-curvatures, at least for $p$ sufficiently large. Our first main result confirms this expectation.
 
\begin{thm}[Nilpotent $p$-curvature] \label{thm:nilp}
Let $X$ be a smooth connected projective complex variety and $(E,\nabla)$ be a rigid flat connection with torsion determinant $L$. Then there is a scheme $S$ of finite type over $\Z$ over which $(X, (E,\nabla))$ has a model $(X_S, (E_S,\nabla_S))$ such that for all closed points $s\in S$,  $(E_s,\nabla_s)$ has nilpotent $p$-curvature. 
\end{thm}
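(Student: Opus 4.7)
The plan is to argue by contradiction: if the $p$-curvature of $(E_s,\nabla_s)$ were not nilpotent at some closed point $s$, the characteristic coefficients of the $p$-curvature would place $(E_s,\nabla_s)$ on a positive-dimensional fibre of a Hitchin-type map for flat connections in characteristic $p$, contradicting the fact that the reduction is still rigid in its moduli scheme.

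First I would spread out $(X,E,\nabla,L)$ to a model $(X_{S_0}, E_{S_0}, \nabla_{S_0}, L_{S_0})$ over a finite-type $\Z$-algebra with $L_{S_0}$ still torsion. The relative moduli scheme $M_{dR}(X_{S_0}/S_0, L_{S_0})(r)$ is of finite type over $S_0$, and the tautological section determined by the model lands at an isolated point of the generic fibre. Since the locus of $S_0$ where the local fibre dimension of $M_{dR}(X_{S_0}/S_0, L_{S_0})(r) \to S_0$ at the section vanishes is constructible, after shrinking to a dense open $S \subset S_0$ I may assume $[(E_s,\nabla_s)]$ is isolated in $M_{dR}(X_s/k(s), L_s)(r)$ for every closed point $s \in S$.

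Now fix $s\in S$ with residue characteristic $p$, and let $F\colon X_s \to X_s^{(p)}$ be the relative Frobenius. The $p$-curvature $\psi_s$ is an $\Oo_{X_s}$-linear section of $F^\ast \Omega^1_{X_s^{(p)}/k(s)} \otimes \End(E_s)$, and its characteristic coefficients yield a morphism
\[
h\colon M_{dR}(X_s/k(s), L_s)(r) \longrightarrow \mc{A}_s := \bigoplus_{i=1}^{r} H^0\!\left(X_s^{(p)}, \Sym^i \Omega^1_{X_s^{(p)}/k(s)}\right).
\]
If $a := h([(E_s,\nabla_s)])$ were nonzero, the spectral subscheme $Y_a \subset T^\ast X_s^{(p)}$ cut out by $a$ would not be contained in the zero section. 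Exploiting that $\sD_{X_s/k(s)}$ is an Azumaya algebra over its centre $\Oo_{T^\ast X_s^{(p)}}$, one identifies $h^{-1}(a)$, up to a $\G_m$-gerbe coming from the Brauer class, with a moduli of torsion-free rank-one sheaves on $Y_a$ with fixed determinant; this is a torsor over a Prym-type variety of $Y_a \to X_s^{(p)}$, which is positive-dimensional because $Y_a$ is nontrivial. The resulting nonconstant family of deformations of $(E_s,\nabla_s)$ preserving $L_s$ contradicts isolation, so $\psi_s$ must be nilpotent.

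The delicate step, and the main obstacle, is the extraction of honest deformations from the Hitchin picture inside the determinant-$L_s$ moduli, especially when the spectral scheme $Y_a$ is singular, reducible, or of higher dimension: one must work with compactified Picard schemes (or moduli of torsion-free rank-one sheaves) on $Y_a$ and verify that the fixed-determinant condition does not collapse the fibre to a finite set. A secondary point is ensuring uniform representability and fibre-dimension behaviour of the relative moduli at all closed points of $S$, handled by restricting during the spread-out to an appropriate stable slice of $M_{dR}$, stability being inherited from the irreducibility of $(E,\nabla)$ in characteristic zero.
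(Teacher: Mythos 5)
Your overall strategy is genuinely different from the paper's, and it contains a gap that I do not see how to repair. The paper does \emph{not} argue by producing a deformation of $(E_s,\nabla_s)$ inside a positive-dimensional Hitchin fibre; it proves (Proposition~\ref{Crigid}) that for $p$ large the inverse Cartier transform $C^{-1}$ carries rigid stable Higgs bundles to rigid stable flat connections, and then runs a counting argument: by the complex Simpson correspondence and spreading out, $n_{dR}(X_s,L_s,r)=n_{Dol}(X_s',L_s',r)$, and since every $C^{-1}$ of a rigid Higgs bundle is a rigid connection with nilpotent $p$-curvature, the count forces \emph{every} rigid connection on $X_s$ to arise this way. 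The input from characteristic $0$ is essential, and indeed the authors leave the purely characteristic-$p$ statement (``is every rigid flat connection on a smooth projective variety over a perfect field of characteristic $p$ of nilpotent $p$-curvature?'') as an open question at the end of the appendix. Your argument, if it worked, would answer that question affirmatively, which should already make you suspicious.

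The concrete failure is the claim that the fibre $h^{-1}(a)$ over a nonzero $a$ is positive-dimensional ``because $Y_a$ is nontrivial.'' This is a statement about curves. For $\dim X>1$ the BNR/Azumaya picture identifies $h^{-1}(a)$ (after splitting the Brauer class, or as a twisted moduli problem) with sheaves on the finite cover $Y_a\to X_s^{(p)}$ of a $d$-dimensional projective variety, and the relevant Prym-type variety sits inside $\mathrm{Pic}^0(Y_a)$; for a regular variety (say $H^1(X,\Oo_X)=0$, which is the typical situation where rigid connections exist) this can perfectly well be $0$-dimensional even when $Y_a$ is not contained in the zero section. So no contradiction with isolatedness is produced. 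A second, independent problem: even where the fibre is positive-dimensional, you need the connected component of $h^{-1}(a)$ \emph{through the point} $[(E_s,\nabla_s)]$ to be positive-dimensional, and the family must stay stable with determinant $L_s$; you flag this as ``delicate,'' but without homogeneity of the fibre it is not merely delicate, it is the whole content. The closest rigorous relative of your idea is the paper's appendix: there the $\G_m$-scaling on $T^*X'$ plus the Ogus--Vologodsky splitting on the $(p-1)$-st infinitesimal neighbourhood of the zero section produces a deformation over $\Spec(k[t]/(t-1)^p)$ moving the Hitchin invariant whenever $a\neq 0$ --- but an Artinian deformation only contradicts \emph{cohomological} rigidity (vanishing of $H^1(X,\sE nd^0(E,\nabla))$), not isolatedness of a possibly non-reduced moduli point, which is exactly why the main theorem needs the transcendental counting argument.
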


After replacing $S$ by an open dense subscheme, we may assume that $X_S \to S$ is smooth. For every Witt ring $W(k)$ of a finite field $k$, and every morphism $\Spec W(k) \to S$ one obtains a formal flat connection $(\widehat{E}_{W(k)},\widehat{\nabla}_{W(k)})$ on the $p$-adic completion 
$$\widehat{X}_{W(k)} = \left((X \times_S \Spec W(k)\right)^{\widehat{\;}}.$$ 
Furthermore, by the theorem above, the $p$-curvature of the restriction of this formal connection to $X_k = X \times_S \Spec k$ is nilpotent. A formal connection with this property gives rise to a \emph{crystal} on {$X_k$}. We refer the reader to Subsection \ref{sub:criss} for more details and references.
 
\begin{corollary}\label{cor:isoc}
Let $(E,\nabla)$ and $(E_S,\nabla_S)$ be as in Theorem \ref{thm:nilp}, $k$ and let $\Spec W(k) \to S$ be a morphism which factors through the smooth locus of $S$, and where $k$ is a finite field. Then the pullback to the formal scheme (obtained by $p$-adic completion) $$(\hat E_{W(k)},  \hat \nabla_{W(k)})$$ defines a crystal on {$X_k/W(k)$. }
\end{corollary}

The result above is a direct corollary of Theorem \ref{thm:nilp} (nilpotency of $p$-curvature). Our second main result generalizes a result of Crew for rigid flat connections on $\Pb^1$ minus finitely many points \cite[Theorem 3]{Cre17}. We show that the crystals associated to rigid flat connections in Corollary \ref{cor:isoc} do in fact give rise to $F$-isocrystals. We recall that for $\Spec k \to S$ as above, the category of isocrystals $\Isoc(X_k)$ on $X_k$ is defined as the $\Qb$-linearization of the category of crystals on {$X_k/W(k)$.} The Frobenius $F\colon X_k \to X_k$ allows one to define an endofunctor $F^*\colon \Isoc(X_k) \to \Isoc(X_k)$ (see \cite[Corollaire 1.2.4]{Ber74} for details). We say that a crystal $\Ee$ has a \emph{Frobenius structure}, if there exists a positive integer $f$ such that $(F^*)^f(\Ee) \simeq \Ee$.

\begin{thm}[$F$-isocrystals] \label{thm:Fstr}
 Let $X$ and $(E,\nabla)$ be as in Corollary \ref{cor:isoc}. Then there is a scheme $S$ of finite type over $\Z$ over which $(X, (E,\nabla))$ has a model $(X_S, (E_S,\nabla_S))$ such that for all  {$W(k)$-points of $S$},  the isocrystal $(\widehat{E}_{W(k)}, \widehat{\nabla}_{W(k)})\otimes \Qb$ has a Frobenius structure {after base change to a finite field extension $k'/k$}. 
\end{thm}

The theorem above formally implies Theorem \ref{thm:nilp} and Corollary \ref{cor:isoc}, but we do not know how to prove it directly without passing through the aforementioned results.
\begin{remark}
The terms isocrystal with Frobenius structure and $F$-isocrystal will be used interchangeably. Furthermore, we remark that the notion considered here differs slightly from the one found in some of the standard texts. What we call an $F$-isocrystal other authors would refer to as $F^f$-isocrystal, where $f > 0$ is a positive integer. The category of $F$-isocrystals considered here is the union of the categories of $F^f$-isocrystals for all positive integers $f$. Our usage of the term is consistent with recent advances on companions (see for example \cite{Abe18}) putting these more general $F$-isocrystals in relation with $\bar{\mathbb{Q}}_{\ell}$-adic sheaves.
\end{remark}

In fact, our statement is slightly stronger than Theorem \ref{thm:Fstr}. The isocrystal above is induced by a filtered Frobenius crystal (with a $W(\mathbb{F}_{p^f})$-endomorphism structure).
This shows that the isocrystals with Frobenius structure stemming from Theorem \ref{thm:Fstr} are associated to a crystalline representation  $\pi_1(X_{K_v})\to GL(W(\F_{p^f}))$ for some positive integer $f\ge 1$ called the \emph{period}. These statements are consequences of the theory of Lan--Sheng--Zuo, see Remark \ref{rmk:flf}.  By comparing their construction with Faltings's $p$-adic Simpson correspondence, one shows that this representation is rigid over $\pi_1(X_{\bar K_v})$, see  Theorem \ref{thm:monodromy}.  This enables one to prove that the induced projective connection is defined over $\Fb_p$ {for infinitely many primes $p$} (see also Corollary \ref{cor:f_p_one}). The proof of the third main theorem is based on this observation. This is the content of Section \ref{sec:unitary}.

\begin{thm} \label{thm:unitary}
Let $X$ be a smooth connected  projective complex variety, and let $(E,\nabla)$ be a rigid flat connection on $X$.  Assume that we have a scheme $S$ as in Theorem \ref{thm:Fstr} such that the $p$-curvature for all closed points $s$ of $S$ is $0$. Then $(E,\nabla)$ has unitary monodromy.
\end{thm}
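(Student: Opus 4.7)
The plan is to combine Simpson's theorem identifying rigid connections with polarized complex variations of Hodge structure (VHS) with the Ogus--Vologodsky / Lan--Sheng--Zuo dictionary between $p$-curvature and Higgs field that underlies the proof of Theorem~\ref{thm:Fstr}. By Simpson \cite{Sim92}, the rigid irreducible connection $(E,\nabla)$ underlies a polarized complex VHS; write $F^\bullet$ for its Hodge filtration and $(H,\theta):=\gr_{F^\bullet}(E,\nabla)$ for the associated graded Higgs bundle, whose Higgs field $\theta\colon H\to H\otimes \Omega^1_{X/\C}$ is induced by $\nabla$ via Griffiths transversality. For an irreducible object the complex non-abelian Hodge correspondence implies that the monodromy of $(E,\nabla)$ is unitary if and only if $\theta=0$, so the theorem reduces to establishing this vanishing.

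Next I would spread $(E,\nabla,F^\bullet)$, and hence $(H,\theta)$, out over the scheme $S$ of Theorem~\ref{thm:Fstr}, shrinking $S$ if necessary so that the Ogus--Vologodsky Cartier transform $C$ is available on every fibre $X_s$. The Frobenius structure of Theorem~\ref{thm:Fstr}, constructed via Lan--Sheng--Zuo (cf.~Remark~\ref{rmk:flf}) together with a rigidity-based matching, provides a canonical identification of Higgs bundles on the Frobenius twist $X_s^{(1)}$
\[
C(E_s,\nabla_s) \;\cong\; (H_s,\theta_s)
\]
for every closed point $s\in S$.

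Invoking the hypothesis, the $p$-curvature of $(E_s,\nabla_s)$ vanishes, and by the fundamental property of the Ogus--Vologodsky Cartier transform this is equivalent to the vanishing of the Higgs field of $C(E_s,\nabla_s)$. Through the identification displayed above it follows that $\theta_s=0$ at every closed point $s\in S$. Since $\theta_S$ is a global section of a coherent sheaf on $X_S$ and the closed points of $S$ are Zariski dense, this forces $\theta_S=0$; base changing back to $\C$ yields $\theta=0$, which by the first paragraph gives unitary monodromy for $(E,\nabla)$.

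The main obstacle is the identification in the displayed formula above. The Lan--Sheng--Zuo construction directly produces only a correspondence between $(E_s,\nabla_s)$ and an iterate of $C^{-1}(H_s,\theta_s)$ according to the period $f$ of the Frobenius structure, so to ensure that the Higgs field appearing on the right is really the initial one coming from Simpson's VHS, rather than a Frobenius-twisted or higher-iterate version, one has to use the rigidity of $(E,\nabla)$ to pin down uniqueness of the compatible Hodge filtration. Once this compatibility is achieved, the Ogus--Vologodsky characterization of vanishing $p$-curvature gives the conclusion immediately.
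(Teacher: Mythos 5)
Your overall strategy is the paper's: reduce to vanishing of the Kodaira--Spencer class $\theta$ of the VHS underlying $(E,\nabla)$, use the Cartier transform to translate vanishing $p$-curvature into vanishing of a Higgs field, and conclude by density over $S$. However, the displayed identification $C(E_s,\nabla_s)\cong (H_s,\theta_s)$ --- which you yourself flag as ``the main obstacle'' --- is precisely the hard content of the theorem, and your proposed fix (``use the rigidity of $(E,\nabla)$ to pin down uniqueness of the compatible Hodge filtration'') does not close it. Rigidity only shows that the Higgs--de Rham flow permutes the finite set of rigid objects, hence is $f$-periodic for \emph{some} $f$ (Lemmas~\ref{lem:bij} and \ref{lem:per}); it does not force $f=1$, and nothing about uniqueness of the Hodge filtration rules out that $\gr_F(E_s,\nabla_s)$ is a nontrivial Frobenius-shift $\sigma(V_s,0)$ of the initial Higgs bundle rather than $(V_s,0)$ itself.

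What the paper actually does to bridge this gap occupies all of Section~4: it compares the Lan--Sheng--Zuo crystalline representation with Faltings's $p$-adic Simpson correspondence to show that $\rho_{i,s}$ is rigid over the geometric fundamental group (Theorem~\ref{thm:monodromy}), then uses Simpson's Galois-descent argument for rigid projective representations together with Cassels's embedding theorem to manufacture infinitely many ``good'' primes at which every rigid projective representation is defined over $\Z_p$ (Lemma~\ref{lem:good}), whence $f^{\mathrm{proj}}_{i,s}=1$ (Corollary~\ref{cor:f_p_one}). Even then the resulting identification is only an equivalence of \emph{projective} Higgs bundles, so one obtains $\gr(\nabla_s)=\omega\cdot\id$ for a $1$-form $\omega$, and a further step --- nilpotence of $\gr(\nabla_s)$ --- is needed to force $\omega=0$. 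Your proposal asserts an honest (non-projective) isomorphism of Higgs bundles at every closed point, which is stronger than what is proved; the correct conclusion is vanishing of $\theta_s$ only at the good points, which still suffices since they meet infinitely many residue characteristics. In short: the skeleton is right, but the step you isolate is a genuine gap whose filling is the bulk of the paper's argument, not a routine consequence of rigidity.
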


We consider this result to be a first step towards an understanding of the Grothendieck--Katz $p$-curvature conjecture for rigid flat connections.

\medskip

An irreducible flat connection $(E,\nabla)$ with torsion determinant $L$ is called \emph{cohomologically rigid}, if $[(E,\nabla)]$ is a reduced isolated point of $\MdR(X,L,r)$. This is equivalent to vanishing of
$$H^1_{dR}(X,(\Eend(E),\nabla)) = 0,$$
and hence explains the nomenclature.

{A remark is in order concerning the rationale behind the inclusion of Section \ref{sec:comp}}. In a preliminary version of this article circulated as a preprint, we used Theorem \ref{thm:Fstr}, in combination with the theory of $p$-to-$\ell$-companions, to prove Simpson's integrality conjecture \cite{Sim92} for cohomologically rigid flat connections. 
In the meantime we found a purely Betti to $\ell$-adic argument which can be found in the short companion note \cite{EG17}. Our original strategy was based on the observation that a cohomologically rigid flat connections brings forth an $F$-isocrystal (by means of Theorem \ref{thm:Fstr}) having a complete set of companions. Since it is unknown if this property holds for arbitrary $F$-isocrystals, we decided to record this result in Section \ref{sec:comp} as Theorem \ref{thm:comp}.

\medskip
{\bf Conventions.} For a scheme $S$, we denote by $|S|$ the underlying topological space or set of points. The terminology \emph{arithmetic scheme} refers to a scheme of finite type over $\Spec \Zb$. The term \emph{variety} refers to a separated and reduced scheme of finite type over a field.

\medskip
{\bf Acknowledgement.} The article rests on Simpson's theory,  his definition and study of complex  rigid integrable connections, his program. It is  a great pleasure to thank him for the discussions we had  when we formulated 
the first step of the \emph{Leitfaden} of this article, namely when we proposed to study the $p$-curvatures of rigid connections and prove that they are nilpotent (see Theorem \ref{thm:nilp}). 
We thank him for his warm encouragements. 

Kang Zuo's explanations of the theory of Higgs-de Rham flows which he developed jointly with collaborators, proved to be indispensable to our article. It is a pleasure to thank him for his comments, and to emphasize that
Theorem \ref{thm:Fstr} is an application of the methods of \cite{LSZ13}. 
Furthermore, we profited from numerous enlightening discussions with Tomoyuki Abe about isocrystals with Frobenius structure.

Carlos Simpson and Mark Kisin pointed out errors in a very early version of Proposition \ref{prop:pcurv}. It is a pleasure to thank them for bringing this to our attention, and also for many interesting conversations on the subject.
We thank Adrian Langer for his interest, for exchanges on our wish to prove nilpotency of the $p$-curvatures of rigid connections and for discussions around the topic presented here. His beautiful lecture in Berlin on \cite{LS16} awoke our interest.  In addition to the above, we greatly benefitted from conversations with Marco d'Addezio, Alexander Beilinson, Oliver Br\"aunling, Yohan Brunebarbe, Javier Fres\'an, Ph\`ung H\^o Hai,  Moritz Kerz, Bruno Klingler, Jo\~ao Pedro dos Santos, Simon P\'epin Lehalleur, Takeshi Saito during the preparation of this work. We thank Yun Hao for pointing out a mistake in the formulation of a previous version of Theorem \ref{thm:BNR}. The first author thanks the Vietnam Institute for Advanced Study in Mathematics for hospitality, and all the participants at the workshop in Tuan Chau where a tiny piece of the material of this article has been presented. 

The anonymous referees went out of their way and above and beyond the call of duty. We believe that their helpful remarks and suggestions greatly improved the presentation of the material.  

\section{Preliminaries}\label{sec:preliminaries}

We begin by giving an overview of the theory of Higgs bundles and non-abelian Hodge theory in Subsection \ref{sub:higgs_nilp}. Subsection \ref{sub:p-curvature} aims to give an introduction to the theory of flat connections over perfect fields of positive characteristic, with a particular focus on the concept of \emph{$p$-curvature}. 
A result of Ogus--Vologodsky relates flat connections with nilpotent $p$-curvature and nilpotent Higgs bundles. Subsection \ref{sub:OV} summarises this correspondence. A useful tool in the study of Higgs bundles and flat connections in positive characteristic is the BNR correspondence which we recall in Subsection \ref{sub:bnr}. We conclude this section with a brief overview of crystals.

\subsection{Recollection on Higgs bundles and non-abelian Hodge theory}\label{sub:higgs_nilp}

Let $Z/k$ be a smooth projective variety where $k$ is an algebraically closed field. A \emph{Higgs bundle} on $Z$ is a pair $(V,\theta)$ where $V$ is a vector bundle and $\theta\colon V \to V \otimes \Omega_X^1$ is an $\Oo$-linear map satisfying the integrality condition $\theta \wedge \theta = 0$. This definition is reminiscent of a flat connection, that is, a pair $(E,\nabla)$, where $E$ is a vector bundle with a connection $\nabla$ satisfying the integrality condition $\nabla^2 = 0$. A flat connection satisfies the Leibniz rule $\nabla(fs) = f \nabla s + s df$, while a Higgs field $\theta$ is $\Oo$-linear.

If $k$ is the field of complex numbers, non-abelian Hodge theory \cite{Sim97} relates the so-called \emph{Betti}, \emph{de Rham} and \emph{Dolbeault} moduli spaces by real-analytic isomorphisms:
\[
\xymatrix{
\MDol(Z,r) \ar@{<->}[rr]^{\mathbb{R}-analytic} \ar@{<->}[rd]_-[@!-27]{\scriptstyle \mathbb{R}-analytic} & & \MdR(Z,r) \ar@{<->}[ld]^-[@!31]{\scriptstyle \mathbb{C}-analytic} \\
& \MB(Z,r). & 
}
\]
The subscript $B$ refers to the \emph{Betti space}, the moduli space of irreducible representations of the topological fundamental group $\pi_1^{\rm{top}}(Z)$ of $Z$. The construction of this quasi-affine moduli space is an application of geometric invariant theory, relying on the fact that $\pi_1^{\rm top}$ is a finitely presented group. Details of the construction can be found in \cite[Section 6]{Sim94b}.

The existence of a quasi-projective moduli space of stable Higgs bundles $\MDol(Z,r)$, as well as a quasi-projective moduli space $\MdR(Z,r)$ of stable flat connections, are theorems: for $k$ of characteristic $0$ this is a consequence of Simpson's \cite[Theorem 4.7]{Sim94}, for positive characteristic fields we refer the reader to \cite[Theorem 1.1]{Lan14}. In fact the latter also applies to base schemes of mixed characteristic (which are of finite type over a universally Japanese ring). Later on (Subsection \ref{sub:model}) we will exploit this added generality when producing \emph{arithmetic models} for the moduli spaces $\MDol$ and $\MdR$.

The moduli space of Higgs bundles $\MDol$ is particularly rich in structure. It carries a $\G_m$-action, which on the level of the moduli problem corresponds to scaling the Higgs field: 
$$\lambda \cdot (V,\theta) = (V,\lambda \theta), \; \; \; \; (\lambda \in \G_m).$$

There is a natural morphism (the \emph{Hitchin morphism}) $$\MDol(X) \to \A$$ to an affine space $\A$ called the \emph{Hitchin base} (see \cite[p. 17]{Sim94b}). On the level of the moduli problem it is given by computing (the coefficients of) the characteristic polynomial of $\theta$ which are symmetric forms on $X$, that is, global sections of $\Sym^i\Omega_X^1$.

A \emph{rigid} stable Higgs bundle $(V,\theta)$ is a Higgs bundle with torsion determinant $\sL=\det(V)$ and ${\rm trace}(\theta)=0$, 
 which induces an isolated point of the moduli space {$\MDol(X, (\sL,0),r)$}. The following lemma is due to Simpson (see \cite[Section 5]{Sim97}) and can be seen as the first step in the proof of Simpson's result that rigid representations of the fundamental group give rise to complex variations of Hodge structure \cite[Lemma 4.5]{Sim97}.

\begin{lemma}\label{nilpotent} 
If $(V,\theta)$ is a rigid stable Higgs bundle, then $\theta$ is nilpotent.
\end{lemma}

\begin{proof}
Assume that $\theta$ is not nilpotent. Then the corresponding value of the Hitchin map, that is, the characteristic polynomial $a = \chi(\theta)$ of $\theta$, is non-zero. 

We consider the $\G_m$-family of stable Higgs bundles, given by $(V,\lambda \theta)$. Since the $\G_m$-action on the space of characteristic polynomials has positive weights, and $a=\chi(\theta)$ is non-zero, we obtain a non-trivial deformation of characteristic polynomials. Therefore, the $\G_m$-family $(V,\lambda \theta)$ is a non-trivial deformation. This contradicts rigidity of $(V,\theta)$.
\end{proof}

Recall that theorem \ref{thm:nilp} asserts that mod $p$ reductions of rigid flat connections have nilpotent $p$-curvature, for $p$ sufficiently large. A possible approach to proving this result is to apply similar ideas to flat connections. This is complicated by the fact that a multiple $\lambda \cdot{} \nabla$ of a flat connection does not satisfy the Leibniz rule. However, there is still a way to make sense of the Hitchin map, and to construct deformations (non-canonically, and only of finite order) above the $\G_m$-action on the Hitchin base $\Ab$ (which has positive weights). This approach is the content of the appendix to this paper. The proof given in Subsection \ref{proof:nilp} is based on a slightly different strategy.


\subsection{Flat connections in positive characteristic and $p$-curvature}\label{sub:p-curvature}

In the following we denote by $k$ a perfect field of characteristic $p > 0$ and by $Z/k$ a smooth $k$-scheme.
Recall that $\Omega^i_Z$ refers to the $i$-th exterior power of the sheaf of K\"ahler differentials $\Omega^1_{Z/k}$.
 
A \emph{connection} on a vector bundle $E/Z$ is given by a $k$-linear map of sheaves
$$\nabla\colon E \to E \otimes \Omega_Z^1,$$
satisfying the \emph{Leibniz rule}
$$\nabla(fs) = f\nabla(s) + s \otimes df,$$
for locally defined sections $s \in E(U)$, $f \in \Oo_Z(U)$, where $U \subset Z$ is an open subset. We say that $\nabla$ is \emph{flat} or \emph{integrable}, if 
$$\nabla^2 = 0\colon E \to E \otimes \Omega_Z^2.$$

The concept of $p$-curvature, which we will describe in the following, marks the crossroads where flat connections over fields of vanishing and positive characteristic diverge. At first we need to introduce some notation essential to the definition of $p$-curvature.

Let $U \subset Z$ be a open subscheme, and $\partial \in \Theta_Z(U)$ a section of $k$-derivations of $\Oo_X$ (that is, a tangent vector field on $Z$). We denote by $\partial^{[p]}$ {the $k$-derivation of $\Oo_U$ which sends a local section $f \in \Oo(V)$ for $V \subset U$ a Zariski open to}
$$\partial^{[p]}(f) = \partial^p(f) = (\partial \cdots \partial)(f).$$
{The proof of the lemma below is based on an elementary computation involving the general Leibniz rule. We omit the details.}

\begin{lemma}
The $k$-linear endomorphism $\partial^{[p]}$ of $\Oo_U$ is a derivation. That is, it gives rise to a tangent vector field $\partial^{[p]} \in \Theta_Z(U)$.
\end{lemma}

The operation $\partial \mapsto \partial^{[p]}$ defines on the Lie algebroid $\Theta_Z$ a so-called \emph{$p$-restricted structure}. Just like usual curvature measures the discrepancy of a connection $\nabla\colon \Theta_Z \to \Eend_k(E)$ to be a map of sheaves of Lie algebras, $p$-curvature captures the extent to which a flat connection is compatible with the $p$-restricted structure.

\begin{definition}\label{defi:p-curvature}
The \emph{$p$-curvature} of a flat connection $\nabla$ on a quasi-coherent sheaf $\mathcal{F}$ on $Z$ is defined to be the $k$-linear map of sheaves
$$\psi(\nabla)\colon \Theta_Z \to \Eend_k(\mathcal{F})$$
sending a local section $\nabla \in \Theta_Z(U)$ to the $k$-linear endomorphism of $\mathcal{F}$ given by 
$$(\nabla_{\partial})^p - \nabla_{\partial^{[p]}}.$$
\end{definition}


A priori the $p$-curvature $\psi(\nabla)(\partial)$ of a flat connection is a $k$-linear endomorphism of $E$. A result of Katz describes the dependence of this endomorphism on the tangent vector field $\partial$ (see \cite[Proposition 5.2]{Kat70}).

\begin{proposition}[Katz]\label{prop:Katz}
The map of sheaves $ \psi(\nabla): \Theta_Z \to \Eend_k(E)$ is $p$-linear. That is, for a Zariski open subset $U \subset X$ and local sections $\partial_1,\partial_2 \in \Theta_Z(U)$ and $f \in \Oo_Z(U)$ we have
$$\psi(\nabla)(\partial_1 + f \partial_2) = \psi(\nabla)(\partial_1) + f^p \psi(\nabla)(\partial_2).$$
Furthermore, for every local section $\partial \in \Theta_X(U)$, the induced map of sheaves $\psi(\nabla)(\partial)\colon E|_U \to E|_U$ is $\Oo_U$-linear.
\end{proposition}

We now recall the definition of Frobenius twists and the relative Frobenius morphism. Let us denote by $w\colon k \to k$ the \emph{{arithmetic} Frobenius}, that is, the field endomorphism given by the map $\lambda \mapsto \lambda^p$ (for $\lambda \in k$).
\begin{definition}\label{defi:Frobenius_twist}
The \emph{Frobenius twist} of a $k$-scheme $Z$ is defined to be the base change 
$$Z'= Z \times_{\Spec k, w} \Spec k.$$
We denote the projection $Z' = Z \times_{\Spec k, w} \Spec k \to Z$ by $w_Z$.
\end{definition}

For every scheme $Z$ of characteristic $p$ (that is, the natural map $Z \to \Spec \Zb$ factors through $\Spec \Fb_p$) one defines the \emph{absolute Frobenius} as the morphism
$f_Z\colon Z \to Z$
which is given by the identity map on the underlying topological space $|Z|$, and by the map of sheaves of rings 
$g \mapsto g^p\text{, where }g \in \Oo_Z(U),\text{ and }U \subset Z\text{ is open}.$

For every $k$-scheme $Z$ there exists a unique morphism of $k$-schemes $F_Z\colon Z \to Z'$ such that 
\begin{equation}\label{relativeF}
f_Z = w_Z \circ F_Z
\end{equation} 
We refer to it as the \emph{relative Frobenius of $Z$}. If there is no risk of confusion, we will denote $F_Z$ by $F$.

\begin{lemma-definition} \label{lem:defn}
The $p$-curvature of a flat connection $(E,\nabla)/Z$ gives rise to an $\Oo_{Z'}$-linear map 
$$\psi(\nabla)\colon F_{Z,*}E \to F_{Z,*}E \otimes_{\Oo_{Z'} } \Omega_{Z'}^1.$$
\end{lemma-definition}

\begin{proof}
By base change we have $w_Z^* \Theta_Z=\Theta_{Z'}$. Thus, Proposition~\ref{prop:Katz} implies that $\psi(\nabla)$ 
factors as $\Theta_{Z'} \to  \Eend_{k}(E)$ and further as 
$F_Z^* \Theta_{Z'} \to  \Eend_{\Oo_Z}(E)$.  The latter is rewritten as a $\Oo_Z$-linear map
$E\to E\otimes_{\Oo_Z} F_Z^*\Omega^1_{Z'}$ which by the projection formula is equivalent to Lemma-Definition \ref{lem:defn}. 
\end{proof}

The $p$-curvature of a flat connection $(E,\nabla)$ is said to be \emph{nilpotent}, if there exists a positive integer $N$ such that
$$\psi(\nabla)^N\colon F_*E \to F_*E \otimes (\Omega_{Z'}^1)^{\otimes N}$$
is the zero map. According to a theorem of Katz (see \cite[Theorem 5.10]{Kat70}), Gau\ss-Manin connections have nilpotent $p$-curvature. 
\begin{theorem}[Katz]\label{thm:katz}
Let $f\colon Y \to Z$ be a smooth projective morphism between smooth $k$-varieties. Then the flat connection given by the  Gau\ss-Manin connection on $R^i f_* (\Oo_Y,d)$ has nilpotent $p$-curvature.
\end{theorem}

Katz's result therefore turns $\psi(\nabla)$ into an invariant which can be used to disprove that a given connection $(E,\nabla)$ on $Z$ is of \emph{geometric origin} (that is, a subquotient of a Gau\ss-Manin connection). If $\psi(\nabla)$ is not nilpotent, then $(E,\nabla)$ does not stand a chance of being of geometric origin. 

\medskip

The constructions reviewed in this subsection are also defined in a relative set-up. We briefly summarise the main points. 

We denote by $T$ a $k$-scheme and let $Z \to T$ be a smooth morphism.
Recall that the absolute Frobenius morphism of $T$ is denoted by $f_T\colon T \to T$. One defines the \emph{relative Frobenius twist} to be the base change $Z' = Z \times_{T,f_T}T$. By virtue of definition it is a $T$-scheme. There is a unique morphism of $T$-schemes $F_{Z/T}\colon Z \to Z'$, called \emph{relative Frobenius morphism} which satisfies
$$f_Z = w_Z \circ F_{Z/T}.$$ 
If there is no risk of confusion, we will denote $F_{Z/T}$ by $F$.

A \emph{de Rham sheaf} on $Z/T$ is a pair $(E,\nabla)$ where $E$ is a quasi-coherent sheaf on $Z$ and $\nabla\colon E \to E \otimes \Omega_{Z/T}^1$ is  an integrable connection, that is, an $\Oo_T$-linear morphism which satisfies the Leibniz rule and the flatness condition $\nabla^2 = 0$. The $p$-curvature of a flat connection $\nabla$ as defined in \cite[5.0.4]{Kat70} will be referred to as $\psi(\nabla)\colon E \to   E\otimes F_{Z/T}^*\Omega_{Z'/T}^1 $. 


\subsection{PD differential operators and Azumaya algebras}
{
A flat connection $\nabla$ on a quasi-coherent sheaf $E$ on a smooth $k$-variety $Z$ induces the structure of a $D_Z$-module on $E$, where $D_Z$ denotes the sheaf of rings of PD differential operators defined below. This is analogous to the fact that a representation of a Lie algebra $\mathfrak{g}$ amounts to a $U\mathfrak{g}$-module, where $U\mathfrak{g}$ denotes the universal enveloping algebra. In positive characteristic, the sheaf of rings $D_Z$ has a large centre, over which it defines an Azumaya algebra. In the following we will describe this observation, which appeared first in Bezrukavnikov--Mirkovic--Rumynin \cite{BMR08}, in more detail.

For a quasi-coherent sheaf $M$ on $Z$ we use the notation $T^{\bullet}M$ to denote the sheaf of tensor algebras 
$$T^{\bullet} M = \bigoplus_{n \geq 0} M^{\otimes n},$$
we write $\Theta_{Z}$ for the sheaf of tangent vectors.}

\begin{definition}
The sheaf of algebras $D_{Z}$ is defined to be the sheafification of $T^{\bullet} \Theta_{Z}$ modulo the relations 
\begin{equation}\partial \cdot f- f\cdot \partial=\partial(f)\end{equation}
\begin{equation}\partial \otimes \partial' - \partial' \otimes \partial = [\partial,\partial']\end{equation}
for local sections $\partial$, $\partial'$ of $\Theta_{Z}$ and $f$ of $\Oo_Z$.
\end{definition}

{The same ideas underlying the $p$-curvature give rise to a map $\psi\colon \Theta_{Z'} \to F_*D_Z$, which sends $\partial$ to $\partial^p - \partial^{[p]}$. }

\begin{proposition}[Bezrukavnikov--Mirkovic--Rumynin]\label{prop:bmr}
{If $k$ is a perfect field of positive characteristic, then the map of sheaves of algebras $\psi\colon \Sym^{\bullet} \Theta_{Z'} \to F_* D_Z$ is an injection whose image agrees with the centre of $F_*D_{Z}$.}
\end{proposition}

Recall that the total space of a vector bundle $V$ on a scheme $Y$ is the scheme given by the relative spectrum
$$\mathrm{Tot}\; V = \Spec_Z \Sym^{\bullet} V^{\vee}.$$
The sheaf of symmetric algebras $\Sym^{\bullet} \Theta_{Z'}$, arising in the proposition above, is therefore isomorphic to $\pi_*\Oo_{T^*Z'}$, where $\pi$ denotes the canonical projection $T^*Z' \to Z'$. 

\begin{lemma-definition}\label{lemma-definition} There exists a quasi-coherent sheaf of algebras $\sD_{Z}$ on $T^*Z'$ such that 
$\pi_*\sD_{Z} \simeq F_*D_Z$.
\end{lemma-definition}

\begin{proof}
For every affine morphism of schemes $f\colon W \to Y$ one has an equivalence of categories 
$$f_*\colon \mathsf{QCoh}_W(\Oo_W) \cong \mathsf{QCoh}_Y(f_*\Oo_W)$$
between quasi-coherent sheaves of $\Oo_W$-modules and quasi-coherent sheaves of $f_*\Oo_W$-modules. We apply this observation to $\pi\colon T^*Z' \to Z'$ and the quasi-coherent sheaf of algebras $F_*D_Z$. {Since it is a $Z(F_*D_Z)$-module, and the centre $Z(F_*D_Z)$ can be identified with the quasi-coherent sheaf of algebras $\pi_*\Oo_{T^*Z'} = \Sym^{\bullet} \Theta_{Z'}$ by virtue of Proposition \ref{prop:bmr}, we conclude that there exists a quasi-coherent sheaf $\sD_{Z}$ on $T^*Z'$, such that }
\begin{equation}\label{eqn:piFDZ}
\pi_*\sD \simeq F_*D_Z.\end{equation} 
Furthermore, $F_*D_Z$ is a sheaf of $Z(F_*D_Z)$-algebras. We infer that $\sD_Z$ inherits a canonical structure of a sheaf of algebras such that \eqref{eqn:piFDZ} is in fact an isomorphism of quasi-coherent sheaves of algebras.
\end{proof}

The relative Frobenius morphism $F\colon Z \to Z'$ is also affine. For the same reasons as in the proof of Lemma-Definition \ref{lemma-definition}, we have an equivalence of categories
$$F_*\colon \mathsf{QCoh}(D_Z) \cong \mathsf{QCoh}(F_*D_Z).$$
Applying Lemma-Definition \ref{lemma-definition}, we see that the right hand side is equivalent to the category $\mathsf{QCoh}(\pi_*\sD_Z)$. Since $\pi$ is an affine morphism, we obtain the following 

\begin{lemma}\label{lemma:equivalence}
There is an equivalence of categories
$$\mathsf{QCoh}(D_Z) \cong \mathsf{QCoh}(\sD_Z).$$
\end{lemma}

This lemma allows us to describe quasi-coherent sheaves with flat connections on $Z$ in terms of quasi-coherent $\sD$-modules on $T^*Z'$. According to a result of \cite{BMR08}, the algebra $\sD$ is \emph{Azumaya}. 

\begin{theorem}[Bezrukavnikov--Mirkovic--Rumynin]
Assume that $Z$ is pure dimensional of dimension $d$. The sheaf of algebras $\sD$ of Lemma-Definition \ref{lemma-definition} is an Azumaya algebra of rank $p^{2d}$, that is, there exists an \'etale covering $\{U_i \xrightarrow{f_i} T^*Z'\}_{i \in I}$ such that we have $$f_i^*\sD \simeq \Eend(\Oo_{U_i}^{p^d}).$$
\end{theorem}

%
%
%

\subsection{The Ogus--Vologodsky correspondence}\label{sub:OV}

As before we denote by $k$ a perfect field of positive characteristic $p$ and by $Z/k$ a smooth $k$-scheme. The pullback of a quasi-coherent sheaf $V$ along the relative Frobenius $F\colon Z \to Z'$ is endowed with a \emph{canonical connection} $\nabla^{can}$. It is uniquely characterised by the property that a local section $s \in F^*V(U)$ is $\nabla^{can}$-horizontal (that is, satisfies $\nabla^{can}(s) = 0$) if and only if $s \in F^{-1}V(U)$. We refer the reader to \cite[Theorem 5.1]{Kat70} for a proof of the following

\begin{theorem}[Cartier descent]
The functor $V \mapsto (F^*V,\nabla^{can})$ embeds the category $\mathsf{QCoh}(Z')$ into $\MIC(Z)$. A de Rham sheaf $(E,\nabla)$ is isomorphic to $(F^*V,\nabla^{can})$ if and only if $\nabla$ has zero $p$-curvature.
\end{theorem}

An important result of Ogus--Vologodsky extends this embedding to certain nilpotent Higgs bundles on $Z'$. At first we need to introduce some notation.

\begin{definition}\label{defi:MICZ}
\begin{itemize}
\item[(a)] We say that $\theta$ is nilpotent of level $\leq N$ if the induced morphism $\theta^N\colon V \to V \otimes (\Omega_{Z}^1)^{\otimes N}$ is the zero morphism. We denote the resulting category by $\Higgs_N(Z)$.
\item[(b)] For a positive integer $N$ we denote by $\MIC_N(Z)$ the category of de Rham sheaves $(E,\nabla)$, where $\psi(\nabla)$ is nilpotent of level $\leq N$, that is, the map $\psi^N\colon E \to E \otimes (F_{Z}^*\Omega_{Z'}^1)^{\otimes N}$ is zero.
\end{itemize}
\end{definition}

We refer the reader to \cite[Theorem 2.8]{OV07} for a proof of the theorem below and a more detailed overview of this result. We remark that the Ogus--Vologodsky-correspondence is deduced by producing splittings of the Azumaya algebra $\sD$ over infinitesimal thickenings of the zero section $Z' \hookrightarrow T^*Z'$.

\begin{theorem}[Ogus--Vologodsky]\label{thm:OV} 
{We use the terminology introduced in Definition \ref{defi:MICZ}. A lifting $\mathcal{Z} \to \Spec W_2(k)$ of $Z \to \Spec k$ gives rise to an equivalence of categories}
$$C^{-1}_{\mathcal{Z}/W_2(k)}\colon \Higgs_{p-1}(Z') \cong \MIC_{p-1}(Z),$$ 
such that $C^{-1}_{\mathcal{Z}/W_2(k)}(V,0) \simeq (F^*V,\nabla^{can})$.
\end{theorem}

\subsection{The Beauville--Narasimhan--Ramanan correspondence}\label{sub:bnr}

{In the following we fix a line bundle $L'$ on $Z'$ of {finite order} invertible in $k$. Its pullback {$F_Z^*L'$}  along the relative Frobenius map is isomorphic to $L^p$. The notion of Gieseker stability (also known as $P$-stability) allows one to construct a quasi-projective coarse moduli space of $P$-stable integrable connections $\MdR(Z/k, L^p,r)$ with determinant $L^p$.
We refer the reader to \cite[Theorem 1.1]{Lan14} for the notion of $P$-stability and for details on the construction of the moduli space.  

The $p$-curvature  {$\psi(\nabla)\in H^0(Z, F_Z^*\Omega^1_{Z'} \otimes \Eend(E))$}  of any integrable connection $(E,\nabla)$ on $Z$ is flat under the tensor product of the canonical connection on $F^*\Omega^1_{Z'}$ and of $\Eend(\nabla)$ on $\Eend(E)$ (\cite[Proposition 5.2.3]{Kat70}). In particular its characteristic polynomial has coefficients in global symmetric forms on $Z'$:
\ml{1}{\chi(\psi(\nabla))={\rm det}(-\psi(\nabla) +\lambda {\rm Id})= \lambda ^r -a_{1} \lambda^{r-1} +  \ldots + (-1)^r a_r, \\ a_i \in H^0(Z', {\rm Sym}^{i}(\Omega^1_{Z'})).}

It was observed by Laszlo--Pauly \cite[Proposition 3.2]{LP01} that $\chi(\psi(\nabla))$ gives rise to a morphism,  called \emph{Hitchin map}
\ga{2}{ \chi_{dR}: \MdR(Z/k,r)\to \A_{Z',r},}
where the affine space $\A_{Z',r}$ is given by
\ga{3}{\A_{Z',r}(T)= \bigoplus_{i=2}^r H^0(Z', {\rm Sym}^{i}(\Omega^1_{Z'}))\otimes_{k} T}
for any $k$-algebra $T$. See also the work of Bezrukavnikov--Braverman \cite[Section 4]{BB07}, the second author \cite[Definitions 3.12 \& 3.16]{Gro16} and Chen--Zhu \cite[Section 2.1]{CZ15}. Properness of this map was established in \cite[Theorem 3.8]{Lan14}.

{Note that the definition above omits the $i=1$ term, since our Higgs fields are tracefree by assumption.}

It follows from the definitions that $\psi(\nabla)$ is nilpotent if and only if $\chi_{dR}([(E,\nabla)])=0$,  where  $[(E,\nabla)]$ is the moduli point of the connection $(E,\nabla)$. 

\medskip

Before recalling the Beauville--Narasimhan--Ramanan (BNR) correspondence for flat connections proven in \cite[Proposition 3.15]{Gro16} we need to define spectral covers. For any $k$-scheme $S$ and $a: S\to \A_{Z',r}$ one has a finite cover
{
\ga{}{ \xymatrix{
\ar[dr]_{\pi'} Z'_a \ar@{^(->}[r] & T^*Z' \times S\ar[d]^{\pi'} \\
 & Z} \notag}
}
defined by the equation $\eqref{1}=0$.  This cover will be referred to as the \emph{spectral cover}.

\begin{remark}  \label{rmk:spec}
Since it is central to our approach, {we give a more detailed description of the definition of the spectral cover.} Consider the quasi-coherent sheaf of algebras given by symmetric forms $\Sym^{\bullet} \Omega_{Z'}^1$ with its natural grading. We adjoin a formal variable $\lambda$ of degree $1$ and obtain the quasi-coherent sheaf of algebras $\Sym^{\bullet} \Omega_{Z'}^1[\lambda]$. Recall that the points of the Hitchin base $\A_{Z',r}$ 
are defined by \eqref{3}. The following more detailed description is useful.
\begin{itemize}
\item[(a)] The vector space of degree $r$  sections  in  $H^0(Z',\Sym^{\bullet} \Omega_{Z'}^1[\lambda])$ is isomorphic to $\A_{Z',r}(k)$. \end{itemize}
The tautological section $\theta \in H^0(Z', \pi^{'*}\Omega^1_{Z'})$ is defined by the  $\sO_{Z'}$-linear homomorphism {
$ \sO_{Z'}\to \pi'_* \pi'^*\Omega^1_{Z'} =\Sym^{\bullet}  T_{Z'}  \otimes _{\sO_{Z'}} \Omega^1_{Z'}  $ }  which is equal to  the identity on the factor {$T^*_{Z'}  \otimes _{\sO_{Z'}} \Omega^1_{Z'} = \mathcal{E}nd (\Omega^1_{Z'}) $}
 and equal to zero on the other factors. 
Pullback along $\pi'$ for $S= \Spec k$ postcomposed with the specialization $\lambda \mapsto \theta$ defines a morphism of algebras 
$$H^0(Z',\Sym^{\bullet} \Omega_{Z'}^1[\lambda]) \to H^0(T^*Z',\pi'^*\Sym^{\bullet} \Omega_{Z'}^1).$$
Similarly, one obtains for a $k$-scheme $S$ a morphism
$$H^0(Z' \times_k S,\Sym^{\bullet} \Omega_{Z' \times_k S/S}^1[\lambda]) \to H^0(T^*Z' \times_k S, \pi'^*\Sym^{\bullet} \Omega_{Z' \times_k S/S}^1).$$
\begin{itemize}
\item[(b)] For $a \in \A_{Z',r}(S)$ we define the spectral cover $Z'_a$ to be the closed subscheme of  $T^*Z' \times_k S$  defined by the sheaf of ideals generated by the section $ \theta^r  +a_2 \theta^{r-2} + \cdots + (-1)^r a_r$
in $(\pi' \times_k {\rm Id})_* \sO_{T^*{Z'} \times_k S}=(\Sym^\bullet T_{Z'} )\otimes_k \sO_S$. Then {$Z'_a\to Z'\times_k S$} is a finite morphism.
\end{itemize}
\end{remark}

It is called the {\it spectral cover} $Z'$ to $a$.  We can now state the BNR correspondence for flat connections. In the following denote by $S$ a $k$-scheme and let $a \in \A_{Z',r}(S)$. An $S$-family of integrable connections on $Z$ refers to a pair $(E,\nabla)$ where $E$ is vector bundle on $Z \times_k S$ and 
$$\nabla\colon E \to E \otimes {\rm pr}_Z^*\Omega^1_{Z}$$
is an $\Oo_S$-linear map satisfying the Leibniz rule and $\nabla^2 = 0$.
\begin{thm}[BNR correspondence] \label{thm:BNR}
The groupoid of $S$-families of rank $r$ integrable connections $(E,\nabla)$ ({with $E$ being a vector bundle}) on $Z\times_k S$ satisfying $\psi(\nabla) = a$, is equivalent to the groupoid of $\sD_{Z'}$-modules $M$ on $Z'_a\subset T^*Z' \times_k S$ such that $\pi'_*M$ is a locally free Higgs sheaf on $Z'$ of rank $p^{d}r$ and characteristic polynomial $a^{p^d}$. 
\end{thm}

\begin{remark}
Recall that we have a quasi-coherent sheaf of $\Oo_{T^*Z'}$-algebras $\sD_{Z'}$. We say that  on $T^*_{Z'}\times_k S$ a quasi-coherent $p_{T^*_{Z'}}^*\sD_{Z'}$-module $M$ is {\it scheme-theoretically supported on a closed subscheme} $i \colon X \hookrightarrow T^*Z' \times_k S $, if $M$ is annihilated by the sheaf of ideals $\mathcal{I}_X \subset \Oo_{T^*_{Z'}\times_k S}$ of $X$. The scheme-theoretic support of $M$ as a $p_{T^*_{Z'}}^*\sD_{Z'}$-module only depends on $M$ as a quasi-coherent sheaf on $T^*Z'\times_k S$. The main part of the proof below is devoted to improving an a priori bound for the scheme-theoretic support.
\end{remark}

In order for our article to remain as self-contained as possible, we recall the proof from \cite[Proposition 3.15]{Gro16}. 

\begin{proof}[Proof of Theorem \ref{thm:BNR}] 
The connection $\nabla$ defines the structure of a $p_{Z}^* D_Z$-module on $E$ over $Z\times_k S$. Therefore we obtain a $F_*p_{Z}^* D_Z$-module $F_*E$, which can be written as pushforward of a $ p_{T^*_{Z'}}^*\sD_{Z'}$-module $M$ on $T^*Z' \times_k S$ along the canonical projection $\pi'\colon T^*Z' \times_k S \to Z' \times_k S$ (see Lemma \ref{lemma:equivalence}). As remarked above, the scheme-theoretic support of $M$ depends only on its $\sO_{T^*Z' \times_k S}$-module structure, which is induced by the $p$-curvature $\psi(\nabla)\colon F_*E \to   p_{Z'}^*\Omega^1_{Z'} \otimes_{\sO_{Z' \times_k S}} F_*E $. 

The pair $F_*(E,\psi(\nabla))$  is a Higgs bundle of rank $rp^d$ on $Z' \times_k S \to S$, where $d={\rm dim}(Z)$. Let $b \in \A_{Z', rp^d}(S)$ be the characteristic polynomial of the Higgs field $\psi(\nabla)$. Then the Higgs bundle 
$F_*(E,\psi(\nabla))$ 
is scheme-theoretically supported on the closed subscheme 
$$Z'_b  \subset T^*Z' \times_k S.$$ However this ``upper bound" for the scheme-theoretic support is far from being optimal. We  construct a degree $r$ polynomial $a \in \A_{Z',r}(S)$ such that $b=a^{p^d}$ (for the multiplication as polynomials), and such that $M$ is scheme-theoretically supported on $Z'_a$.
 It suffices to show that $M$ is scheme-theoretically supported on $Z'_a \hookrightarrow Z'_b \hookrightarrow  T^*Z' \times_k S$.

It is clear that there is at most one such $a \in \A_{Z',r}(S)$ with this property since $a$ and $b$ are monic polynomials. By virtue of \'etale descent for symmetric forms on $Z'$, it suffices to construct $a$ \'etale locally. For the same reasons, one only has to prove \'etale locally that $M$ is scheme-theoretically supported on $Z'_a$.

Let $x \in Z' \times_k S(k^{\text{sep}})$ be a geometric point. We denote by $\Oo_x^h$ the corresponding henselian local ring. Pulling back the finite morphism $Z'_b \to Z' \times_k S$ along $\Spec \Oo_x^h \to Z' \times_k S$ we obtain the spectrum of a product of henselian local rings $\Spec R = \Spec \prod_{i= 1}^N R_i$. Since $\sD_{Z'}$ is an Azumaya algebra, the pullback $\sD_{Z'}|_{\Spec R}$ splits, that is, is isomorphic to the sheaf of matrix algebras $M_{p^d}(\Oo_{\Spec R})$. 

Classical Morita theory (see \cite[Theorems 18.11 \& 18.2]{Lam99}) implies that every quasi-coherent $M_{p^d}(\Oo_{\Spec R})$-module is isomorphic to a unique (up to a unique isomorphism) $M_{p^d}(\Oo_{\Spec R})$-module of the shape $F \otimes_{\Oo_{\Spec R}} \Oo_{\Spec R}^{p^d} = F^{\oplus p^d}$ with the canonical $M_{p^d}(\Oo_{\Spec R})$-action.

Applying the pushforward functor $\pi'_*$ we obtain that the Higgs bundle $$(F_*(E,\psi(\nabla))|_{\Spec \Oo_x^h}$$ splits as a direct sum $\pi'_*F^{\oplus p^d}$. Let $a$ be the characteristic polynomial of the Higgs bundle $\pi'_*F$ on $\Spec \Oo_x^h$. We have $b|_{\Spec \Oo_x^h} = a^{p^d}$ (as polynomials), and $F$ is scheme-theoretically supported on $Z'_a$. Since $F^{p^d} = M$ the same also holds for $M$.

The affine scheme $\Spec \Oo_{x}^h$ is an inverse limit of \'etale neighbourhoods of $x$. From a finite presentation argument we obtain an \'etale morphism $U \xrightarrow{h} Z' \times_k S$ such that the Azumaya algebra $\sD_{Z'}$ splits already when pulled back to $U \times_{Z' \times_k S} Z'_b$. Repeating the argument above we obtain a degree $n$ section of $h^*\Sym^{\bullet} \Omega_{Z' \times S/S}[\lambda]$ (where $\lambda$ is a formal variable of weight $1$)
$$a = \lambda^r   +a_2\lambda^{r-2} +\ldots +a_r$$
such that $a^{p^d} = h^*b$.

By uniqueness of solutions to the equation $a^{p^d} = b$ in $\Sym^{\bullet} \Omega^1_{Z' \times S/ S}[\lambda]$ we obtain $a \in \A_{Z',r}(S)$ with the required property. 
We conclude from the local strict henselian support property 
 that $M$ is scheme-theoretically supported on $Z'_a$.  This finishes the proof.
\end{proof}

}

\subsection{Crystals and $p$-curvature}\label{sub:criss}

In this short subsection we collect the necessary references and facts from the theory of crystals which we use in the core of our article. We do not claim any originality for the results presented here. The main purpose is only to gather the references needed for (the well-known) Theorem \ref{thm:quasinilp} and Corollary \ref{cor:quasinilp} below.
We warmly thank Pierre Berthelot, Luc Illusie, Arthur Ogus,  and Atsushi Shiho for their kind and efficient answers to our questions on references.

Let $k$ be a perfect field, $Z$ be a  smooth $k$-variety. Let 
$W=W(k)$ be the  ring of Witt vectors on $k$,  $W_n=W/p^n$ be the ring of Witt vectors of length $n$. 
  One defines the crystalline sites $Z/W_n$  as in 
\cite[III, Definition 1.1] {Ber74}, then the crystalline site $Z/W$ as the union (or colimit) of the crystalline sites for $n\in \N_{>0}$  (\cite[1.1.3]{BBM82}) along the embeddings 
$$(Z/W_n)_{\rm crys} \hookrightarrow (Z/W_{n+1})_{\rm crys}.$$ 
By definition, the objects of $(Z/W_n)_{\rm crys}$ are relative PD-thickenings over $W_n$. That is, they are triples $(U,T,\delta)$, where 
$U \hookrightarrow Z$ is a Zariski open subset, $U \to T$ is a closed immersion of $W_n$-schemes defined by a sheaf of ideals $\mathcal{I}$, and $\delta$ is a \emph{divided power structure} on $\mathcal{I}$ compatible with the one on $pW_n$.

One defines the category ${\mathsf{Crys}}(Z/W_n)$ of crystals as the category of sheaves of $\sO$-modules $\mathcal{F}$ on $Z/W_n$, of finite type (it is also possible to work with crystals in quasi-coherent sheaves, we restrict ourselves to  $\sO$-modules of finite type), which are \emph{crystals}, that is, for every morphism $f\colon (U,T_1,\delta_a) \to (U,T_2,\delta_2)$ in the crystalline site of $Z/W_n$
one assumes that the transition map 
$$f^*\mathcal{F}_{T_2} \to \mathcal{F}_{T_1}$$
is an isomorphism. The resulting category of crystals ${\mathsf{Crys}}(Z/W_n)$  is abelian and $W_n$-linear \cite[IV 1.7.6]{Ber74} (in \emph{loc. cit.} it is shown that the bigger category of crystals in quasi-coherent sheaves is abelian, this implies the assertion with the finite type assumption, since $Z$ is assumed to be smooth). The $W$-linear  category ${\mathsf{Crys}}(Z/W)$ is defined in \cite[1.3.3]{BM90} ({for the big crystalline site, here we consider only the small one}). Its $\Q$-linearization 
$$\Isoc(Z/W) = {\mathsf{Crys}}(Z/W)_{\Q}$$ 
is the category of isocrystals (see \cite[Definition 0.7.1]{Ogu90}).

We now assume that we have a smooth formal lift $\hat Z_W$ at disposal, defining $Z_n=\hat Z_W\otimes_W W_n$. 
 By \cite[II Theorem 4.3.10, IV Theorem1.6.5]{Ber74}, the category ${\mathsf{Crys}}(Z/W_n)$ is equivalent to $\mathsf{MIC}(Z_n)^{\rm qn}$, where $\mathsf{MIC}(Z_n)$ is the category of modules  $(E_n,\nabla_n)$ of finite type with an integrable connection, and the superscript ${\rm qn}$ refers to \emph{quasi-nilpotency} defined in 
 \cite[II, Definitions 4.3.5 \& 4.3.6] {Ber74}. By \cite[Exercise 4.14]{BO78},  $(E_n,\nabla_n)$ is quasi-nilpotent if and only $(E_1,\nabla_1)=(E_n, \nabla_n)\otimes_{W_n} k$ is. Indeed, as the differential operators commute with the tensor product $\otimes_{W_n} W_m$  for $m\le n$, one direction is trivial. Vice-versa, if $(E_1,\nabla_1)$ is quasi-nilpotent, and $s_n$ is a local section of $E_n,$ then 
 some differential operator $P$ { of a certain  order}   annihilates $s_n\otimes_{W_n} W_1$ ({see  \cite[07JE]{stacks} for the precise definition}), or equivalently $P(s_n) =p \tilde{s}_{n-1}$  where 
 $s_{n-1}$ is a section of $E_{n-1}$ and $\tilde{s}_{n-1}$ any lift in $E_n$.   One iterates to conclude the proof.   
 
  Finally,  by virtue of \cite[Cor.5.5]{Kat70}, on $Z=Z_1$, a module  $(E_1,\nabla_1)$ of finite type with an integrable connection is quasi-nilpotent if and only if its $p$-curvature is nilpotent.  
This implies the following result. 
\begin{theorem} \label{thm:quasinilp}
 The category ${\mathsf{Crys}}(Z/W_n)$ is equivalent to the full subcategory of $\mathsf{MIC}(Z_n)$ consisting of the
 modules  $(E_n,\nabla_n)$ of finite type with an integrable connection, and such that the $p$-curvature of
 $(E_1,\nabla_1)$  is nilpotent. 
 \end{theorem}
The equivalence ${\mathsf{Crys}}(Z/W_n)$ with $\mathsf{MIC}(Z_n)^{\rm qn}$ induces the equivalence between ${\mathsf{Crys}}(Z/W)$ and the subcategory of modules  $(E,\nabla)$ of finite type with an integrable connection on $\hat Z_W$ which are separated and complete, that is $(E,\nabla)= \varprojlim_n
(E_n,\nabla_n)$ where $(E_n,\nabla_n)=
 (E,\nabla)\otimes_W W_n$ lies in $\mathsf{MIC}(Z_n)^{\rm qn}$, see \cite[1.3.3]{BM90}. 

\begin{corollary}\label{cor:quasinilp}
 The category ${\mathsf{Crys}}(Z/W)$ is equivalent to the full subcategory of $\mathsf{MIC}( \hat Z_W)$ consisting of the
 modules  $(E,\nabla)= \varprojlim_n
(E_n,\nabla_n)$ of finite type with an integrable connection, which are separated and complete such that the $p$-curvature of
 $(E_1,\nabla_1)$ is nilpotent. 
\end{corollary}

Let $X \to \Spec W(k)$ be a smooth morphism. We denote by $Z$ the base change $X \times_{\Spec W(k)} \Spec k$, and by $i\colon Z \to X$ the projection to the first factor. Given an object $(E,\nabla)$ of $\MIC(X/W)$, the above criterion allows us to associate to it a crystal, as long as the $p$-curvature of $\nabla$ is nilpotent.

\begin{corollary}
Assume that $i^*(E,\nabla)$ has nilpotent $p$-curvature on $X$. Then the formal flat connection $(\hat {E},\hat {\nabla})$ on $\hat X_W$ gives rise to an object of $\mathsf{Crys}(Z/W)$. 
\end{corollary}

\section{$p$-curvature and rigid flat connections} \label{sec:proofthmnilp}

This section is devoted to the study of mod $p$ reductions of rigid flat connections and culminates in a proof of our first main result, Theorem \ref{thm:nilp} in Subsection \ref{proof:nilp}. The earlier parts of this section lay the foundation for this argument. In Subsection \ref{sub:model} we discuss models of $X$ and rigid flat connections $(E,\nabla)$ over a scheme $S$ of finite type. These models will then be used in Subsection \ref{sub:cartiertransform} to study the interplay of the Ogus--Vologodsky transform and rigidity. {We recall our standing assumption that a rigid flat connection is stable (see Remark \ref{rmk:standing}).}

\subsection{Arithmetic models}\label{sub:model}

Let  $X$  be a smooth complex projective variety and $L \in \Pic(X)$ a line bundle of finite order. We define
\ga{3.1}{ \MdRrig(X/\C,L,r) \subset \MdR(X/\C,L,r)}
to be
the closed subscheme of isolated points of the quasi-projective moduli of $P$-stable integrable connections ({an isolated point is not assumed to be reduced}). As $\MdR(X/\C,L,r)$ is quasi-projective,  $\MdRrig(X/\C,L,r)$ is a $0$-dimensional quasi-projective $\mathbb{C}$-variety, and therefore projective. Furthermore, by definition $\MdR(X/\C,L,r)$ is a disjoint union of $\MdRrig(X/\C,L,r)$ and of its open and closed complement 
$$\MdR(X/\C,L,r) \setminus \MdRrig(X/\C,L,r),$$ which does not contain isolated points.  The same remarks apply mutatis mutandis to the moduli space of $P$-stable Higgs bundles $$\MDolrig(X/\C,L,r) \subset \MDol(X/\C,L,r).$$

\begin{lemma}[Arithmetic models]\label{lemma:arithmetic_models}
There exists a morphism of schemes $X_S \to S$ satisfying the following conditions: 
\begin{enumerate}
\item[(a)] $S$ is of finite type and smooth over $\Spec \mathbb{Z}$;
\item[(b)] $S$ has a unique generic point $\eta$, and there is an embedding of fields $k(\eta) \subset \Cb$;
\item[(c)] the base change along the map $\Spec \Cb \to S$ of (b) satisfies $\Spec \mathbb{C} \times_S X_S \simeq X$;
\item[(d)] the map $X_S \to S$ is smooth and projective;
\item[(e)] there is a line bundle $L_S \in \Pic(X_S)$ such that $L_S$ pulls back to a line bundle isomorphic to $L$ on $X$.
\end{enumerate}
\end{lemma}

\begin{proof}
We denote by $\mathcal{R}$ the set of subrings $R \subset \Cb$ which are of finite type over $\Zb$. Since $X$ is a projective $\Cb$-scheme there exists $R \in \mathcal{R}$ such that there is a projective $R$-scheme $X_R$ together with an isomorphism
$$X_R \times_{\Spec R} \Cb \simeq X.$$
Indeed, it suffices to choose an explicit presentation of $X \subset \Pb^N_{\Cb}$ by a system of homogeneous equations, and to consider the smallest subring $R \subset \Cb$ containing the coefficients of these homogeneous polynomials. It follows from \cite[Th\'eor\`eme 8.8.2(ii) \& 8.10.5(xiii)]{EGA} that we can even choose $X_R \to \Spec R$ to be a smooth and projective $R$-scheme. Similarly, the results \cite[Th\'eor\`eme 8.5.2(i) \& Proposition 8.5.5]{EGA} show that $X_R$ can be assumed to possess a line bundle $L_R$ which pulls back to $L$ on $X$ (up to isomorphism).
 
If $\Spec R \to \Spec \Zb$ is not already smooth, then it suffices to invert a finite number of elements $f_1,\dots,f_m$ of $R$ such that $\tilde{R} = R[f_1^{-1},\dots,f_m^{-1}] \subset \Cb$ is smooth over $\Zb$. We set $S = \Spec \tilde{R}$ and define $X_S = X_R \times_{\Spec R} S$. By construction, it satisfies all of the conditions above.
\end{proof}

A pair $(X_S,L_S)$ as above will also be referred to as an \emph{arithmetic model} of $(X,L)$. The proofs of our main results are based on a careful choice of arithmetic models.  

By \cite[Theorem 1.1]{Lan14},  there are quasi-projective moduli $S$-schemes
$$\MdR(X_S/S,  L_S,r) \to S, \ 
\MDol(X_S/S,  L_S,r) \to S.$$  For any  locally noetherian $S$-scheme $T$, one has a morphism $\varphi_{T}\colon \MdR(X_S/S,L_S,r) \times_S T\to \MdR(X_T/T, L_T,r)$. If $T$ is a geometric point, $\varphi_T$ induces an isomorphism on geometric points on both sides, and likewise for $\MDol(X_S/S,  L_S,r)$. In order to simplify notation, we will denote the disjoint union
$$\bigsqcup_{r' \leq r}\MdR(X_S/S,  L_S,r')$$
by the shorthand $\MdR(X_S/S,  L_S,\leq r)$. The same remark and notational convention applies to $\MDol(X_S/S,  L_S,r)$ mutatis mutandis.

\begin{definition}\label{defi:rig}
For an $S$-scheme $\mathcal{X} \to S$ we denote by $\mathcal{X}^{\rm rig}$ the maximal open subscheme such that $\mathcal{X}^{\rm rig} \to S$ is quasi-finite at all points of $\mathcal{X}^{\rm rig}$ (see \cite[01TI]{stacks} for a proof of openness).
\end{definition}

We apply this definition to the moduli $S$-schemes $\MdR(X_S/S,  L_S,r)$ and $\MDol(X_S/S,  L_S,r)$. The corresponding open subschemes will be denoted by $\MdRrig(X_S/S,  L_S,r)$, respectively $\MDolrig(X_S/S,  L_S,r)$.


\begin{proposition}[Nice models]\label{prop:nice_models}
For every positive integer $r$ there exists an affine arithmetic scheme $S$ and a model $(X_S,L_S)$ of $(X,L)$ such that the following conditions hold:
\begin{enumerate}
\item[(a)] For every rigid flat connection $(E_{\Cb},\nabla_{\Cb})$ over $X$ with determinant $L$ and rank $\leq r$ there exists {a spreading out} to a relative flat connection $(E_S,\nabla_S)$ on $X_S/S$ which is $P$-stable over geometric points.
\item[(b)] For every rigid Higgs bundle $(V_{\Cb},\theta)$ over $X$ with determinant $L$ and rank $\leq r$ there exists {a relative Higgs bundle} $(V_S,\theta_S)$ on $X_S/S$ which is $P$-stable over geometric points.
\item[(c)] Furthermore, {in (b) we may assume the Higgs field $\theta_S$ to be nilpotent.}
\item[(d)] The sections 
$$[E_S,\nabla_S]\colon S \to \MdR(X_S/S,  L_S,\leq r)$$ and 
$$[V_S,\theta_S]\colon S \to \MDol(X_S/S,  L_S,\leq r)$$ 
induced by $(E_S,\nabla_S)$ of (a), respectively $(V_S,\theta_S)$ of (b) and (c) factor through $\MdRrig(X_S/S,  L_S,\leq r)$, respectively $\MDolrig(X_S/S,  L_S,\leq r)$.
\item[(e)] For every point $y \in |\MdRrig(X_S/S,  L_S,\leq r)|$ there exists a family $(E_S,\nabla_S)$ as in (a) such that $y$ belongs to the set-theoretic image $[E_S,\nabla_S](|S|)$, and similarly for $\MDolrig(X_S/S,  L_S,\leq r)$
\end{enumerate}
\end{proposition}

\begin{proof}
Lemma \ref{lemma:arithmetic_models} implies the existence of an irreducible affine arithmetic scheme $\tilde{S}$ such that there is a model $(X_{\tilde{S}},L_{\tilde{S}})$ satisfying the conditions outlined there. We let $\tilde{R}$ be its ring of functions. By virtue of assumption it is embedded into $\Cb$.

Let $\mathcal{R}$ denote the set of finite type subrings $\tilde{R} \subset R \subset \Cb$ such that there is an arithmetic model $(X_R,L_R)$ satisfying the conditions of Lemma \ref{lemma:arithmetic_models}. 

Since the scheme $X$ is an inverse limit of the schemes $X_R$ where $R \in \mathcal{R}$, it follows from repeated application of  \cite[Th\'eor\`eme 8.5.2(i) \& Proposition 8.5.5]{EGA} that there exists $R \in \mathcal{R}$ such that conditions (a) and (b) are satisfied. Here we use that there are only finitely many rigid flat connections and Higgs bundles of rank $\leq r$ and determinant $L$.

{Furthermore, we have seen in \ref{nilpotent} that a rigid Higgs bundle $(V,\theta)$ on the complex scheme $X$ has nilpotent Higgs field.} In particular we have $\theta^{r}_{\Cb} = 0$. It follows from \cite[Th\'eor\`eme 8.5.2(i)]{EGA} that there exists $R \in \mathcal{R}$ such that $\theta^r_R = 0$. Therefore, conditions (a-c) hold.

Let $\{(E_i,\nabla_i)\}_{i=1,\dots,N}$ be a list of representatives of isomorphism classes of rigid flat connections on the complex projective variety $X$ of rank $\leq r$ and determinant $L$. By virtue of (a) there exist relative flat connections $\{(E_{i,S},\nabla_{i,S})\}_{i=1,\dots,N}$ over $X_S/S$, extending these representatives. By openness of $P$-stability we may assume that these families are $P$-stable over geometric points. We denote by 
$$s_i = [(E_{i,S},\nabla_{i,S})]\colon S \to \MdR(X_S/S,  L_S,\leq r)\text{ for }i=1,\dots,N$$
the associated $S$-points of the moduli space $\MdR(X_S/S,  L_S,\leq r)$. Let $\eta$ be the unique generic point of $S$. Since $k(\eta) \subset \Cb$, and by virtue of assumption we have $(E_i,\nabla_i) \in \MdRrig(X,  L,\leq r)$, it follows $s_i(\eta) \in \MdRrig(X_S/S,  L_S,\leq r)$ for $i=1,\dots,N$. The subset
$$U=\bigcap_{i=1}^N s_i^{-1}(|\MdRrig(X_S/S,  L_S,\leq r)|)$$
is therefore non-empty and open, since $\MdRrig(X_S/S,  L_S,\leq r) \subset \MdR(X_S/S,  L_S,\leq r)$ is open. Replacing $S$ by $U$ we have verified the first half of (d). The second half is treated the same way by replacing rigid flat connections by Higgs bundles.

It remains to prove (e). We assume that there is a model $(X_S,L_S)$ satisfying conditions (a-f). By construction, $\MdRrig(X_S/S,  L_S,\leq r) \to S$ is quasi-finite. Furthermore, using the notation introduced above, there are finitely many sections
$$s_i\colon S \to \MdRrig(X_S/S,  L_S,\leq r)\text{ for }i=1,\dots,N,$$
such that 
\begin{equation}\label{eqn:eta}
\bigcup_{i=1}^N \{s_i(\eta)\} = \MdRrig(X_S/S,  L_S,\leq r) \times_S \eta
\end{equation} 
We now apply Zariski's main theorem for quasi-finite maps \cite[Th\'eor\`eme 8.12.6]{EGA} and choose a factorization
$$\MdRrig(X_S/S,  L_S,\leq r) \hookrightarrow \widetilde{M} \xrightarrow{h} S,$$
where the first morphism is an open immersion and the second is finite. Furthermore, we may assume without loss of generality  that $\MdRrig(X_S/S,  L_S,\leq r)$ is dense in $\widetilde{M}$. We define 
$$Z= \overline{\left(|\widetilde{M}| \setminus \bigcup_{i=1}^N s_i(|S|)\right)}.$$
Since $\widetilde{M}$ is finite over $S$, the image $h(Z) \subset S$ is closed, and does not contain $\eta$ by virtue of \eqref{eqn:eta}. It follows that after replacing $S$ by $S \setminus h(Z)$ one obtains
$$|\MdRrig(X_S/S,  L_S,\leq r)| = \bigcup_{i=1}^N s_i(|S|).$$
This concludes the proof of (e) for flat connections. The case of Higgs bundles is dealt with mutatis mutandis.
\end{proof}


\subsection{Cartier transform and rigidity}\label{sub:cartiertransform}
 
Henceforth we let $S$ and $(X_S,L_S)$ be nice models as in Proposition \ref{prop:nice_models}. Smoothness of $S$ over $\Spec \Z$ implies for a closed point $s\in S$ with residue field $k(s)$ the existence of a lift $\Spec W_2(k(s)) \to S$. By base change one obtains a lift $X_{ W_2(k(s)) }$ of $X_s$ to $W_2(k(s))$. 

  \begin{lemma}\label{lemma:rigidmeansrigid}
Let $k$ be a perfect field of positive characteristic $p$, and let $Z/k$ be a smooth projective $k$-variety.  Then a stable Higgs bundle $(V,\theta)$ on $Z$ is rigid if and only if the stable Higgs bundle $w^{*}(V,\theta)$ is rigid, and a stable integrable connection $(E,\nabla)$ is rigid if and only if the stable integrable connection $w^{*}(E,\nabla)$ is rigid. 
\end{lemma}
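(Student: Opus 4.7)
The plan is to reduce the statement to the observation that pullback along $w$ identifies the relevant moduli schemes with their Frobenius twists. Since $k$ is perfect, the absolute Frobenius $F_k\colon \Spec k \to \Spec k$ is an isomorphism of schemes, and $w\colon Z'\to Z$, being the base change of $F_k$ along the structure map $Z\to \Spec k$, is likewise an isomorphism of schemes (though \emph{not} a $k$-morphism). This is the whole source of the argument.

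Next, I would observe that pullback along the scheme isomorphism $w$ (equivalently along its inverse) is an exact tensor equivalence between quasi-coherent sheaves on $Z$ and on $Z'$, compatibly with K\"ahler differentials. It therefore restricts to equivalences
\begin{equation*}
\Higgs(Z/k) \simeq \Higgs(Z'/k), \qquad \MIC(Z/k) \simeq \MIC(Z'/k),
\end{equation*}
preserving rank, $P$-stability, and sending the determinant $L$ to $L' = w^*L$. By the functoriality of Langer's moduli construction \cite{Lan14} with respect to isomorphisms of the base scheme, this equivalence is representable and yields isomorphisms of schemes
\begin{equation*}
M_{Dol}(Z'/k, L')(r) \xrightarrow{\ \sim\ } M_{Dol}(Z/k, L)(r)\times_{\Spec k, F_k}\Spec k,
\end{equation*}
and the analogous statement on the de Rham side.

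Finally, base change along the isomorphism $F_k$ is a homeomorphism on underlying topological spaces and preserves the property of a local ring at a closed point being Artinian (of finite length). Hence a closed point of $M_{Dol}(Z/k,L)(r)$ is isolated if and only if its image in the Frobenius twist is isolated; combined with the previous isomorphism this gives the required equivalence between rigidity of $(V,\theta)$ and rigidity of $w^{-1*}(V,\theta)$, and verbatim for integrable connections.

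The argument is entirely formal and the only step that could require some care is the functoriality of Langer's construction along the non-$k$-linear scheme isomorphism $w$; however, this is immediate from the universal property of the moduli functor of $P$-stable pairs, since $w^*$ transports families over any test scheme $T$ to families over $T\times_{\Spec k, F_k}\Spec k$. I do not expect any genuine obstacle: the subtle point is purely linguistic, namely not to conflate "isomorphism as $k$-schemes" with "isomorphism as schemes", and rigidity is manifestly a property of the latter.
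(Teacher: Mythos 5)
Your argument is correct and rests on the same key observation as the paper's proof: since $k$ is perfect, $w\colon Z'\to Z$ is an isomorphism of abstract schemes, so pullback along it transports (families of) stable Higgs bundles and flat connections while only Frobenius-twisting the $k$-structure. The paper packages this at the level of test families, replacing a parameter scheme $C$ by its untwist $\underline{C}$, whereas you Frobenius-twist the coarse moduli scheme itself via functoriality of Langer's construction; both routes are formal and equivalent, and your topological conclusion about isolated points is the same one the paper draws.
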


\begin{proof}
{Recall that $(V,\theta)$ is not rigid if and only if there exists a geometrically irreducible $k$-scheme $C$ of finite type, with $\dim C > 0$ and a $C$-family of Higgs bundles $(V_C,\theta_C)$, $\theta_C: V_C\to V_C \otimes \Omega^1_{Z\times_k C/C}$, and two closed points $c_0,c_1 \in C(k')$ defined over a finite field extension $k'/k$, such that $(V_C,\theta_C)_{c_0}$ is isomorphic  to $(V, \theta)_{k'}$, and $(V_C,\theta_C)_{c_0}$ and $(V_C,\theta_C)_{c_1}$ are not isomorphic over the algebraic closure $\overline{k}$.}
We have a canonical isomorphism of schemes
$$(Z\times_k C)'=Z'\times_{k} C',$$
where the $k$ structure on the right is the one from the left in ${\rm Spec }(k)\xrightarrow{w} {\rm Spec }(k)$. And the functor $w_{Z\times_k C}$ induces an equivalence between $C$-families of stable Higgs bundles
on $Z$ and $C'$-families of stable Higgs bundles on $Z'$. This concludes the proof in the Higgs case. A similar strategy applies to the de Rham case.
\end{proof}
Subsequently, we simplify the notation of \eqref{3} and use the notation $\A'$ instead of $\A_{Z',r}$. The role of the variety $Z$ in \eqref{3} will be played by $X_s$ where $s\in S$ is a closed point and $X_S/S$ a nice model of $X$. 

Let $T$ be a $k(s)$-scheme. 
 We say that a $T$-point $a: T  \to  \A'$ is {\it OV-admissible}, if the spectral cover $X'_{s,a} \subset T^*X'_{s}\times_{k(s)} T $  (see  Remark \ref{rmk:spec})
 factors through the $(p-1)$-st order infinitesimal neighbourhood of the zero section $X_s' \hookrightarrow T^*X_s'$.  We assume that the characteristic $p$ of $k(s)$ is $\ge r+2$. {This assumption guarantees that the level of nilpotency of the Higgs field lies in the range that is needed in order to apply the Ogus--Vologodsky correspondence recalled in Subsection \ref{sub:OV}. }
\begin{proposition}\label{Crigid}
Let $(X_S,L_S)$ be a nice model of $X$ as in Proposition \ref{prop:nice_models}. There exists a positive integer $D$, depending only on $X_S/S$ such that for any closed point $s \in S$ with $\mathrm{char}\; k(s) > D$, and any rigid stable Higgs bundle $(V_s,\theta_s)$, the { inverse} Cartier transform $C^{-1}(V'_s,\theta'_s)$ is a stable rigid integrable connection. 
\end{proposition}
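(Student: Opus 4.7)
\emph{Proof sketch.} The assertion has two parts, stability and rigidity of $C^{-1}(V'_s,\theta'_s)$. Stability follows immediately from Langer's preservation of (semi)stability under the Cartier transform (invoked after Theorem~\ref{thm:OV}), so the focus is on rigidity.

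First, one checks that $C^{-1}$ is actually applicable to $(V'_s,\theta'_s)$. The $\G_m$-action on $M_{Dol}(X_s/k(s),L_s)(r)$ by scaling the Higgs field fixes any isolated point, and any $\G_m$-fixed stable Higgs bundle is a system of Hodge bundles; hence $\theta_s$ is nilpotent. Cayley--Hamilton on $\sE nd(V_s)$ bounds its level by $r$, so for $p\geq r+2$ one has $(V'_s,\theta'_s)\in\Higgs_p(X'_s/k(s))$, and $(E_s,\nabla_s):=C^{-1}(V'_s,\theta'_s)\in\MIC_p(X_s/k(s))$ is defined, with $p$-curvature nilpotent of level $\leq r$. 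Lemma~\ref{lemma:rigidmeansrigid} further shows $(V'_s,\theta'_s)$ is isolated in $M_{Dol}(X'_s/k(s),L'_s)(r)$, and hence in the closed subscheme $M_{Dol}^{nilp}(X'_s/k(s),L'_s)(r)$. Combining Theorem~\ref{thm:OV} with Langer's preservation of stability, one obtains an isomorphism of moduli schemes $M_{dR}^{nilp}(X_s/k(s),L_s^p)(r)\simeq M_{Dol}^{nilp}(X'_s/k(s),L'_s)(r)$ sending $(E_s,\nabla_s)$ to $(V'_s,\theta'_s)$, so $(E_s,\nabla_s)$ is isolated in $M_{dR}^{nilp}(X_s/k(s),L_s^p)(r)$.

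The principal obstacle is to upgrade this to isolation in the full moduli $M_{dR}(X_s/k(s),L_s^p)(r)$. The plan is to apply the Cartier equivalence functorially over Artinian bases. Given a deformation $(E_A,\nabla_A)$ of $(E_s,\nabla_s)$ over an Artinian local $k(s)$-algebra $A$ with $\mathfrak{m}_A^n=0$, write $\psi(\nabla_A)=\psi(\nabla_s)+\delta$ with $\delta$ having coefficients in $\mathfrak{m}_A$, and expand $\psi(\nabla_A)^N$ as a sum of compositional monomials in $\psi(\nabla_s)$ and $\delta$. A pigeonhole argument shows that for $N\geq(r+1)n-1$ every such monomial contains either $n$ factors of $\delta$ (and therefore vanishes because the scalar contribution in $A$ lies in $\mathfrak{m}_A^n=0$) or $r$ consecutive factors of $\psi(\nabla_s)$ (and therefore vanishes by nilpotency of level $\leq r$). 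Hence any Artinian deformation of length $\leq n$ already lies in $\MIC_p$ once $p\geq(r+1)n-1$.

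To close the argument it remains to bound $n$ uniformly. Since $M_{Dol}^{\rm rig}(X_S/S,L_S)(r)\to S$ is finite, the Artinian formal neighborhoods of its closed points have a uniform bound $n_0$ on their nilpotency index depending only on $X_S/S$. Setting $D:=(r+1)n_0$, for $p>D$ the Cartier equivalence applied over Artinian bases identifies the formal neighborhood of $(E_s,\nabla_s)$ in $M_{dR}(X_s/k(s),L_s^p)(r)$ with that of $(V'_s,\theta'_s)$ in $M_{Dol}(X'_s/k(s),L'_s)(r)$; the latter is Artinian by the rigidity of $(V'_s,\theta'_s)$, so $(E_s,\nabla_s)$ is isolated. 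The main difficulty lies precisely in this last deformation-theoretic bookkeeping: extracting a uniform bound $n_0$ from the geometry of $X_S/S$ that is independent of $s$.
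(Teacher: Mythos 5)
Your reduction to controlling the nilpotency level of the deformed $p$-curvature is sound as far as it goes: stability via Langer, nilpotency of $\theta_s$, and the pigeonhole estimate showing that a deformation $(E_A,\nabla_A)$ over an Artinian $A$ with $\mathfrak m_A^n=0$ has $p$-curvature nilpotent of level bounded linearly in $n$ and $r$ are all fine, and the last of these plays the same role as the paper's notion of OV-admissibility. The gap is in the closing step, and it is a genuine circularity. The Artinian algebras $A$ you must control are arbitrary Artinian quotients of the complete local ring of $M_{dR}(X_s/k(s),L_s^p)(r)$ at $[(E_s,\nabla_s)]$; these have unbounded nilpotency index exactly when that local ring is not Artinian, i.e.\ exactly when the de Rham point fails to be isolated --- which is the statement being proven. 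The uniform bound $n_0$ you extract from finiteness of $M^{\rm rig}_{Dol}(X_S/S,L_S)(r)\to S$ lives on the Dolbeault side and only becomes available \emph{after} the Cartier transfer has been justified for the given deformation, which is what required the bound in the first place. Nor can you retreat to first-order deformations: rigidity in Definition~\ref{def:rigid} is isolation in the coarse moduli space, not cohomological rigidity, so an isolated point may well carry non-trivial deformations over $k[\epsilon]/\epsilon^2$ and indeed over Artinian rings of any fixed length.

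The paper escapes this by truncating a different invariant. Rather than bounding the length of the base, it bounds the order of vanishing at $0\in\A'$ of the Hitchin invariant $\chi_{dR}$ of a deformation: finiteness of $M^{\rm rig}_{Dol}\to S$ of degree $<D'$ forces the characteristic polynomial of any Artinian deformation of a rigid Higgs bundle to factor through the $(D'-1)$-st infinitesimal neighbourhood $\A'^{(D'-1)}$, with $D'$ depending only on $X_S/S$. One then argues by contradiction: a putative non-trivial de Rham deformation is restricted to the $(m-1)$-st infinitesimal neighbourhood of its base point for a fixed $m>D'$; on that truncation the Hitchin invariant factors through $\A'^{(m-1)}$, so the spectral cover stays within the $(p-1)$-st neighbourhood of the zero section for $p-1>R(r)m$, the Azumaya algebra $\sD_{Z'}$ splits there, and the resulting equivalence $C_a$ produces a Higgs deformation whose characteristic polynomial escapes $\A'^{(D'-1)}$ --- contradicting the degree bound. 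If you want to keep your $p$-curvature-level formulation, you would need to replace your uniform bound on $n$ by an analogous truncation-and-contradiction scheme applied to a quantity that is bounded a priori on the Dolbeault side.
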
 

\begin{proof}
Stability is proven by following the argument of \cite[Cor. 5.10]{Lan14}, which shows semistability.  The main point is the rigidity assertion, which we prove now. 
Let $T$ be a $k(s)$-scheme. By  the BNR correspondence (Theorem \ref{thm:BNR}) we may describe a $T$-family of flat connections $(E_T,\nabla_T)$ in terms of a pair $(a\colon T \to \A',M)$ where $M$ is a $\mathcal{D}_{X'_s}|_{X'_{s,a}}$-module on the spectral cover $X'_{s,a} \hookrightarrow T^*X_s' \times_{k(s)} T$.

If $a\in \A'(T)$ is OV-admissibble,  we may apply Ogus--Vologodsky's result that $\mathcal{D}$ splits on the $(p-1)$-st order neighbourhood of $X'_s\hookrightarrow T^*X_s'$ (see \cite[Cor. 2.9]{OV07}), and hence obtain for every OV-admissible $a \in \A'(T)$ that the stack of stable flat connections $(E,\nabla)$ on $X_s$ with $\chi_{dR}( (E,\nabla)) = a$  (defined in \eqref{2}) is equivalent to the stack of stable Higgs bundles $(V',\theta')$ on $X'_s$ with $\chi( (V', \theta')) = a$. We denote this equivalence by $C_a$, respectively $C^{-1}_a$.

There exists a polynomial function $R(r,m)$ (linear in $m$ and quadratic in $r$), with the following property: let $m$ be a positive integer, and let $\A^{\prime(m)}$ be the $m$-th order neighbourhood of $0 \in \A'$. If {we have} $p-1 > {R(r,m)}$, then every scheme theoretic point $a: T \to  \A^{\prime (m)}$ is OV-admissible.

Let $D'$ be a positive integer such that $D'$ is bigger than the degrees of the finite morphisms $\MDolrig(X_S/S, L_S) \to S$ and $\MdRrig(X_S/S, L_S) \to S$. Let $k(s) \to B \to k(s)$ be an augmented Artinian local algebra such that we have a $B$-deformation $(V_B,\theta_B)$. The characteristic polynomial of $\theta_B$ defines a point $\chi: \Spec B\to \A'$. Then $\chi$ factors through the $(D'-1)$-st order infinitesimal neighbourhood $\A^{\prime (D'-1)}$. To see this we observe that we have a commutative diagram
\[
\xymatrix{
\Spec B \ar[r]^-{\phi} \ar[rd]_{\chi} &  \MDolrig(X'_s/k(s), L'_s) \ar[d] \\
& \A'.
}
\]
The morphism $\phi$ factors through the connected component of the moduli space corresponding to the isolated point $[(V_s,\theta_s)]$. This shows that $\chi$ factors through the $(D'-1)$-st order infinitesimal neighbourhood.

Let $m$ be a positive integer such that  $m > D'$, and assume $p-1 > {R(r,m)}$, such that every scheme theoretic point $a: T\to   \A^{\prime (m) }$ is OV-admissible.

We assume by contradiction that $(E,\nabla) = C^{-1}(V'_s,\theta'_s)$ is not rigid as a local system. This implies that there exists a deformation $(E_T,\nabla_T)$, parametrized by an augmented $k(s)$-scheme $\Spec k(s) \to T \to \Spec k(s)$, so that the corresponding Hitchin invariant $\chi_{dR}( (E_T, \nabla_T))$ does not factor through $\A^{\prime (m-1)}$. 

 We denote by $T_t^{(m-1)}$ be the $(m-1)$-st order neighbourhood of $t={\rm Im}(\Spec k(s))$ in $T$. By construction the family $(E_{T_t^{(m-1)}},\nabla_{T^{(m-1)}_t})$ has the property that $\chi_{dR}((E_{T_t^{(m-1)}},\nabla_{T^{(m-1)}_t}))$ is a morphism $T^{(m-1)}_t \to \A'$, and therefore it factors through {$\A^{\prime (m-1)}$}, but not through $\A^{(k-1)}$ for $k < m$. 
 
 For every $p - 1 > {R(r,m)}$,  we can apply the equivalence of categories $C_{a}$ to construct a $T^{(m-1)}_t$-deformation $(V'_{T^{(m-1)}_t},\theta'_{T^{(m-1)}_t})$ of $(V'_s,\theta'_s)$ such that $\chi(\theta'_{T^{(m-1)}_t})$ does not factor through $\A^{\prime (D'-1)}$. This {is a contradiction}.
\end{proof}

{
We introduce new notation before turning to the consequences of the result proved above.
\begin{definition}
Let $Z/k$ be a smooth projective variety, $L$ be a line bundle of finite order, invertible in $k$.  Let $n_{dR}(Z, L, r)$ denote the number of isomorphism classes of stable rigid flat connections of rank $r$ with determinant isomorphic to $L$  on $Z$. Let $n_{Dol}(Z, L, r)$ the number of isomorphism classes of stable rigid Higgs bundles of rank $r$  with determinant isomorphic to $L$ on $Z$.
\end{definition}
If $L$ is a line bundle on $Z$ and $Z'$ is the Frobenius twist of $Z$, we denote by $L'$ the Frobenius twist of $L$, that is the pullback of $L$ under the map $Z'\to Z$.
\begin{corollary}\label{cor:ndR}
Let $D$ be the positive integer of Proposition \ref{Crigid}. Let $s \in S$ be a closed point such that $\mathrm{char}(s) > D$. If $n_{dR}(X_s, L_s, r) = n_{Dol}(X_s', L'_s,r)$ then every stable rigid flat connection $(E,\nabla)$ of rank $r$ {and determinant line $L$} on $X_s$ has nilpotent $p$-curvature.
\end{corollary}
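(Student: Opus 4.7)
The plan is to use the inverse Cartier transform of Proposition~\ref{Crigid} to construct a bijection from stable rigid Higgs bundles on $X_s'$ to the \emph{subset} of stable rigid flat connections on $X_s$ with nilpotent $p$-curvature, and then exploit the numerical hypothesis to conclude that this subset is everything.

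First I would introduce $n_{dR}^{\mathrm{nilp}}(X_s, L_s, r)$ for the number of isomorphism classes of stable rigid flat connections of rank $r$ on $X_s$ with determinant $L_s$ whose $p$-curvature is nilpotent. The image of $C^{-1}$ lies inside this set, because a connection of the form $C^{-1}(V',\theta')$ has $p$-curvature that (under the Azumaya/BNR picture recalled in Subsection~\ref{sub:OV} and Theorem~\ref{thm:BNR}) is scheme-theoretically supported on the zero section; equivalently, by Theorem~\ref{thm:OV} the object $C^{-1}$ lands in $\MIC_N$ for some $N$ bounded by the level of nilpotence of $\theta'$, which is at most $r\le p-2$ since every stable rigid Higgs bundle has nilpotent Higgs field (as recalled just after \eqref{3.1}).

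Next I would assemble the bijection. Proposition~\ref{Crigid} already asserts that $C^{-1}$ sends stable rigid Higgs bundles on $X_s'$ to stable rigid flat connections on $X_s$. Since Theorem~\ref{thm:OV} is an equivalence of categories, restricted to isomorphism classes it is injective, so
\[
n_{Dol}(X_s',L_s',r)\;\le\; n_{dR}^{\mathrm{nilp}}(X_s, L_s, r).
\]
Conversely, if $(E,\nabla)$ is a stable rigid flat connection on $X_s$ with nilpotent $p$-curvature of level $\le p$, then $C(E,\nabla)$ exists and is a stable Higgs bundle on $X_s'$, rigid by the ``only if'' direction of Proposition~\ref{Crigid} (which, as is clear from its proof using the equivalence $C_a/C_a^{-1}$ in both directions, goes both ways). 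Hence equality
\[
n_{Dol}(X_s',L_s',r)\;=\;n_{dR}^{\mathrm{nilp}}(X_s, L_s, r)
\]
holds.

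Combining this with the trivial inequality $n_{dR}^{\mathrm{nilp}}(X_s, L_s, r)\le n_{dR}(X_s, L_s, r)$ and the numerical hypothesis $n_{dR}(X_s, L_s, r) = n_{Dol}(X_s', L_s', r)$ forces $n_{dR}^{\mathrm{nilp}}(X_s, L_s, r) = n_{dR}(X_s, L_s, r)$, i.e.\ every stable rigid flat connection of rank $r$ on $X_s$ has nilpotent $p$-curvature. The main conceptual point, already handled by Proposition~\ref{Crigid}, is that $C^{-1}$ preserves rigidity once $p$ is large compared to $r$ and the ``deformation size'' $D'$ coming from the degrees of $M_{Dol}^{\rm rig}\to S$ and $M_{dR}^{\rm rig}\to S$; granted that, the corollary is just a counting argument. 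The only point to double-check is the matching of determinants on both sides (using that $L\mapsto L^p$ is a bijection on torsion line bundles of order prime to $p$), which is standard.
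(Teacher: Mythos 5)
Your proof is correct and follows essentially the same counting argument as the paper: $C^{-1}$ injects the rigid stable Higgs bundles into the rigid stable flat connections with nilpotent $p$-curvature, so $n_{dR}(X_s,L_s,r)=n_{Dol}(X_s',L_s',r)\le n_{dR}^{\mathrm{nilp}}(X_s,L_s,r)\le n_{dR}(X_s,L_s,r)$ forces equality. Your converse step (applying $C$ to a nilpotent rigid connection and invoking an unstated "only if" direction of Proposition~\ref{Crigid}) is not needed for the conclusion and can be dropped.
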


\begin{proof} 
Let $n_{dR}^{nilp}(X_s, L_s, r)$ be the number of isomorphism classes of stable rigid flat connections $(E,\nabla)$ of rank $r$ with determinant $L_s$ on $X_s$ which have nilpotent $p$-curvature. By definition we have $n_{dR}^{nilp}(X_s, L_s, r) \leq n_{dR}(X_s,L_s, r)$ and equality holds if and only if every stable rigid $(E,\nabla)$ of rank $r$  with determinant $L_s$ has nilpotent $p$-curvature.
By Proposition \ref{Crigid},  for $\mathrm{char}(s) > D$,  the functor $C^{-1}$ sends a rigid Higgs bundle to a rigid flat connection. By definition of $C^{-1}$ the latter has nilpotent $p$-curvature. We therefore conclude that $n_{dR}^{nilp}(X_s,L_s, r) \geq n_{dR}(X_s,L_s, r)$ which shows $n_{dR}^{nilp}(X_s,L_s, r) = n_{dR}(X_s,L_s, r)$.
\end{proof}

}

\subsection{Proof of Theorem \ref{thm:nilp}} \label{proof:nilp}
{
As in Corollary \ref{cor:ndR} we denote by $n_{dR}(Z, L, r)$ the number of stable rigid rank $r$ flat connections of determinant $L$ on $Z/k$. We use the notation $n_{Dol}(Z,L, r)$ to refer to the number of stable rigid rank $r$ Higgs bundles on $Z/k$ of determinant $L$. 

\begin{proof}
Recall that we have a smooth complex projective variety $X/\mathbb{C}$ and a torsion line bundle $L$ as well as an appropriately chosen model $(X_S/S, L_S)$ {as in Proposition \ref{prop:nice_models}}. The numbers $n_{dR}(X,L, r)$ and $n_{Dol}(X,L, r)$ are equal by virtue of the Simpson correspondence (see \cite[Section 4]{Sim92}) between stable Higgs bundles and irreducible flat connections on $X/\mathbb{C}$. Furthermore, we know from Simpson's observation (see Lemma \ref{nilpotent}) that a rigid Higgs bundle has nilpotent Higgs field.

 For every closed point $s \in S$ one has  $n_{dR}(X,L, r) = n_{dR}(X_s,L_s, r)$ and $n_{Dol}(X, L, r) = n_{Dol}(X'_s,L'_s, r)$ (using Lemma \ref{lemma:rigidmeansrigid}). 
In particular, {we have} $n_{dR}(X_s, L_s, r) = n_{Dol}(X_s', L'_s, r)$ and therefore Corollary \ref{cor:ndR} implies for $\mathrm{char}(s) > D$ that every stable rigid flat connection $(E,\nabla)$ on $X_s$ has nilpotent $p$-curvature.
\end{proof} 
}

\section{Frobenius structure} \label{sec:Fstr}
This section is devoted to proving Theorem \ref{thm:Fstr}. { Our proof is based on the theory of Higgs-de Rham flows as developed by Lan--Sheng--Zuo in  \cite{LSZ13}. We begin by recalling their results.
}
\subsection{Recollection on Lan--Sheng--Zuo's Higgs-de Rham flows}\label{LSZ}
{

As before we denote by $k$ a perfect field of positive characteristic $p$ and by $Z/k$ a smooth $k$-variety {that admits a lift to $W_2(k)$}. {We denote by $w=w_Z\colon Z' \to Z$ the isomorphism of schemes induced by the arithmetic Frobenius $k \to k$ by base change}. We have seen Ogus--Vologodsky's Cartier transform $C\colon \Higgs(Z)_{p-1} \to \MIC(Z)_{p-1}$ in Theorem \ref{thm:OV}. 

\begin{definition}\label{defi:Cartier1}
\begin{itemize}
\item[(a)] Let $\iota\colon \Higgs(Z) \to \Higgs(Z)$ be the autoequivalence given by $(E,\theta) \mapsto (E,-\theta)$.
\item[(b)] We denote by $C_1\colon \Higgs(Z)_{p-1} \to \MIC(Z)_{p-1}$ the composition $C^{-1} \circ (w^{-1})^* \circ \iota$.
\item[(c)] A \emph{Higgs-de Rham fixed point} is a quadruple $(H,\nabla,F,\phi)$, where $(H,\nabla)$ is a vector bundle with a flat connection of level $\leq p-1$, $F$ is a descending filtration on $H$ satisfying the Griffiths transversality condition, and $\phi\colon C_1^{-1}(\mathrm{gr}^F(H),\mathrm{gr}^F(\nabla)) \simeq (H,\nabla)$ is an isomorphism in $\MIC(Z)_{p-1}$. 
\end{itemize}
\end{definition}

Subsection 4.6 of \cite{OV07} shows that the category of Higgs-de Rham fixed points is equivalent to the category of $p$-torsion Fontaine--Lafaille modules as defined in \cite{FL82}. If $Z$ admits a lift to $W(k)$, then the category of Fontaine--Lafaille modules admits a fully faithful functor to the category of \'etale local systems of $\mathbb{F}_p$-vector spaces on $Z_K$, where $K = \mathrm{Frac}(W)$ (see \cite[Theorem 3.3]{FL82} for a special case and Faltings \cite[Theorem 2.6*]{Fal88}).

Lan--Sheng--Zuo generalize this by replacing fixed points by periodic orbits with respect to the so-called Higgs-de Rham flow. This variant gives rise to \'etale local systems of $\mathbb{F}_q$-vector spaces instead. We recall their definition below.  

\begin{definition}[Lan--Sheng--Zuo]\label{defi:HdR_flow}
An $f$-periodic flat connection on $Z$ is a tuple 
$$(E_0,\nabla_0,F_0,\phi_0,E_1,\nabla_1,F_1,\dots,E_{f-1},\nabla_{f-1},F_{f-1},\phi_{f-1}),$$ 
where for all {$i \in \mathbb{Z}/f\mathbb{Z}$} we have that $(E_i,\nabla_i,F_i)$ is a nilpotent flat connection on $Z$ of level $\leq p-1$ with a Griffiths-transversal filtration $F_i$, and $\phi_i\colon C_1^{-1}(gr^F E_i,gr(\nabla_i)) \simeq (E_{i+1},\nabla_{i+1})$.
\end{definition}

The direct sum $\bigoplus_{i=0}^{f-1}(E_i,\nabla_i,F_i)$ is by definition a Higgs-de Rham fixed point. Cyclic permutation of the summands induces an automorphism of order $f$. Using the aforementioned connection between Higgs-de Rham fixed points and $p$-torsion Fontaine--Lafaille modules one obtains the following result (see \cite[Corollary 3.10]{LSZ13}). {In \emph{loc. cit.} the authors assume that $k$ be algebraically closed. However, this assumption is not needed, see Lemma 1.2 in \cite{syz} for a more general version.}

\begin{proposition}[Lan--Sheng--Zuo, {Sun--Yang--Zuo}]\label{prop:lsz}
Assume that $Z$ can be lifted to a smooth $W$-scheme $Z_W/W$, and that $k \supset \mathbb{F}_{p^f}$. The category of $f$-periodic flat connections on $Z$ admits a fully faithful functor to the category of \'etale local systems of $\mathbb{F}_{p^f}$-vector spaces on $Z_K$, where $K=\mathrm{Frac}(W)$.
\end{proposition}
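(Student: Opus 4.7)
The plan is to realise the functor as a composition of three fully faithful passages: first, to unfold the $f$-periodic orbit into an ordinary Higgs--de Rham fixed point equipped with a cyclic automorphism of order $f$; second, to apply the identification, due to Ogus--Vologodsky, between Higgs--de Rham fixed points and $p$-torsion Fontaine--Lafaille modules; third, to apply Faltings' realisation functor from Fontaine--Lafaille modules to \'etale $\mathbb{F}_p$-local systems on $Z_K$, and finally to convert the surviving order-$f$ automorphism into an $\mathbb{F}_{p^f}$-module structure on the local system.

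First I would start with an orbit $(E_i,\nabla_i,F_i,\phi_i)_{i=0}^{f-1}$ and form the direct sum $(E,\nabla,F) = \bigoplus_i (E_i,\nabla_i,F_i)$. Since $C_1^{-1}$ preserves direct sums, the combined map $\phi := \bigoplus_i \phi_i$ (reading indices modulo $f$, so that $\phi_i$ identifies $C_1^{-1}(\mathrm{gr}^F E_i, \mathrm{gr}(\nabla_i))$ with $E_{i+1}$) is an isomorphism, yielding a Higgs--de Rham fixed point as indicated in the excerpt. The cyclic permutation of the summands is by construction compatible with $\phi$ and so furnishes a canonical automorphism $\sigma$ of order $f$ on this fixed point.

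Next, I would invoke Subsection~4.6 of \cite{OV07}, according to which the category of Higgs--de Rham fixed points on the $W_2$-liftable scheme $Z$ is equivalent to that of $p$-torsion Fontaine--Lafaille modules on $Z$; the automorphism $\sigma$ transports to an order-$f$ automorphism $\sigma_M$ of the associated FL module $M$. Then Faltings' extension \cite[Thm.~2.6*]{Fal88} of the classical Fontaine--Lafaille functor provides a fully faithful embedding of the category of $p$-torsion FL modules on $Z$ into that of \'etale $\mathbb{F}_p$-local systems on the generic fibre $Z_K$; applying this to $M$, one obtains an $\mathbb{F}_p$-local system $V$ on $Z_K$ equipped with an order-$f$ automorphism $\sigma_V$.

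Finally, it remains to verify that $(V,\sigma_V)$ canonically encodes an \'etale $\mathbb{F}_{p^f}$-local system of rank $r$. The interplay of $\sigma_V$ with the Fontaine--Lafaille Frobenius, which is of period exactly $f$ by the assumption on the Higgs--de Rham flow, should force the $\mathbb{F}_p$-subalgebra generated by $\sigma_V$ in the endomorphisms of $V$ to be identified canonically with $\mathbb{F}_{p^f}$, giving $V$ the desired structure. Full faithfulness then propagates through the construction: each of the three passages above is fully faithful, and morphisms of $f$-periodic orbits correspond precisely to morphisms of Higgs--de Rham fixed points intertwining the cyclic automorphism. The hard part will be this last identification: distinguishing an honest $\mathbb{F}_{p^f}$-action from the a priori weaker datum of an order-$f$ endomorphism requires a careful analysis of the twisted Frobenius structure implicit in $f$-periodicity, which is precisely what is carried out in \cite[Section~3]{LSZ13}.
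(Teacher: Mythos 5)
Your proposal follows essentially the same route as the paper: both first bundle the $f$-periodic orbit into a single Higgs--de Rham fixed point $\bigoplus_{i=0}^{f-1}(E_i,\nabla_i,F_i)$ endowed with the cyclic order-$f$ automorphism, then pass through the Ogus--Vologodsky identification of fixed points with $p$-torsion Fontaine--Laffaille modules, apply the Fontaine--Laffaille/Faltings realisation into \'etale $\mathbb{F}_p$-local systems on $Z_K$, and finally promote the order-$f$ automorphism to an $\mathbb{F}_{p^f}$-structure by appealing to Lan--Sheng--Zuo. The paper is in fact terser still, delegating the entire last step to \cite[Corollary~3.10]{LSZ13}, whereas you are rightly explicit that the passage from an order-$f$ endomorphism to an honest $\mathbb{F}_{p^f}$-module structure is the non-formal point and is exactly what LSZ verify.
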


So far we have only been considering flat connections $(E,\nabla)$ on the special fibre $Z/k$, and \'etale local systems of vector spaces over finite fields. The theory of \cite{LSZ13} also works in a mixed characteristic setting. Let $W=W(k)$ and $Z_W / W$ be a smooth $W$-scheme. We denote by $Z_n$ the fibre product $Z_W \times_{\Spec W} \Spec W_n$.

In the following definition it is necessary to work with nilpotent connections of level $\leq p-2$ rather than $p-1$.

{

\begin{definition}\label{defi:h}
For every natural number $n$ we define the category $\mathcal{H}(Z_n/W_n)$ to be the category of tuples $(V,\theta, \bar{E},\bar{\nabla},\bar{F},\phi)$, where $(V,\theta)$ is a graded Higgs bundle on $Z_n$, $(\bar{E},\bar{\nabla},\bar{F})$ is a flat connection on $Z_{n-1}$ with a Griffiths-transversal filtration $\bar{F}$, and $\phi\colon gr_F(\bar{E},\nabla) \simeq (V,\theta) \times_{W_n} W_{n-1}$ is an isomorphism of graded Higgs bundles. Furthermore, we assume that the $p$-curvature of $(\bar{E},\bar{\nabla}) \times_{W_{n-1}} k$ is nilpotent of level $\leq p-2$. 
\end{definition}

Similarly we denote by $\MIC(Z_n/W_n)$ the category of quasi-coherent sheaves with $W_n$-linear flat connections on $Z_n$. We have the following result \cite[Theorem 4.1]{LSZ13} (for $k$ algebraically closed and \cite[1.2.1]{syz} for the case of finite fields).} {Closely related results were obtained by Xu in \cite{Xu19}.}

\begin{theorem}[Lan--Sheng--Zuo, {Sun--Yang--Zuo}]\label{thm:cn}
For a positive integer $n$ there exists a functor 
$$C_n^{-1}\colon {\mathcal{H}}(Z_n/W_n)\to \MIC(Z_n/W_n)$$ 
which extends the one of Definition \ref{defi:Cartier1}(b) over the special fibre.
\end{theorem}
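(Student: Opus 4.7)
The plan is to construct $C_1^{-1}$ over $W_n$ by mimicking the Ogus--Vologodsky construction via local Frobenius lifts, now carried out modulo $p^n$ rather than only mod $p$. Zariski-locally on $Z_W$, one may choose an étale coordinate chart, and on such an open $U_W \subset Z_W$ I would lift the absolute Frobenius of $U_0$ to a $W_n$-morphism $\tilde F\colon U_n \to U_n$. Given such $\tilde F$ and a Higgs sheaf $(H,\theta) \in \Higgs(U_n/W_n)_{p-1}$, the local inverse Cartier transform is declared to be
\[
C_{\tilde F}^{-1}(H,\theta) = (\tilde F^{*}H,\; \nabla_{\tilde F,\theta}),
\]
where $\nabla_{\tilde F,\theta}$ is the canonical connection on $\tilde F^{*}H$ twisted by a term $\zeta\circ \tilde F^{*}\theta$, with $\zeta = p^{-1}d\tilde F$ the divided Frobenius (meaningful because $\tilde F$ lifts absolute Frobenius modulo $p$). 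The nilpotency-of-level-$\leq p-1$ hypothesis on $\theta$ is what makes the resulting exponential-type formulas terminate.

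To globalise, the next step is to show that two lifts $\tilde F_1,\tilde F_2$ on overlaps yield canonically isomorphic flat connections $C_{\tilde F_1}^{-1}(H,\theta) \simeq C_{\tilde F_2}^{-1}(H,\theta)$. The isomorphism is produced as the formal exponential of $p^{-1}(\tilde F_2 - \tilde F_1)$ acting via $\theta$, and it is forced to be a finite sum by the level bound. One must then check that these isomorphisms satisfy the cocycle condition over triple overlaps so that effective descent yields a global functor $C_1^{-1}\colon \Higgs(Z_n/W_n)_{p-1} \to \MIC(Z_n/W_n)_{p-1}$. Functoriality in $(H,\theta)$ and compatibility with base change $W_n \to W_1 = k$ (recovering Definition~\ref{defi:Cartier1}(b), with the sign and Frobenius twists $\iota$ and $w^{*}$ already built in) are then formal.

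The main obstacle is the cocycle verification: one needs the formal exponentials of $p^{-1}(\tilde F_j - \tilde F_i)$ to compose correctly under triple concatenation, which is a Baker--Campbell--Hausdorff type identity truncated by the nilpotency bound. A secondary technical point is confirming that the resulting $\nabla_{\tilde F,\theta}$ actually lies in $\MIC(Z_n/W_n)_{p-1}$, i.e.\ that its $p$-curvature on the special fibre is nilpotent of level $\leq p-1$; this follows by identifying $\psi(\nabla_{\tilde F,\theta}|_{Z_0})$ with $\theta|_{Z_0}$ via the standard Ogus--Vologodsky computation on the pullback $\tilde F^{*}H$, so the level of nilpotency is preserved. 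With these ingredients in place one recovers the statement exactly as in \cite[Thm.~4.1]{LSZ13}.
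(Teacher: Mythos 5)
The paper does not prove this theorem: it is quoted from \cite[Thm.~4.1]{LSZ13} and used as a black box, so there is no internal argument to compare your sketch against. What you write does capture the right circle of ideas — local Frobenius lifts, divided Frobenius, exponential gluing — which is the ``exponential twisting'' approach developed in \cite{LSZ15}.

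A few technical points need tightening. The absolute Frobenius of $U_0$ is only $\sigma$-semilinear over a general perfect field $k$, so a lift $\tilde F$ is not a $W_n$-morphism $U_n \to U_n$; you should instead lift the \emph{relative} Frobenius $U_n \to U_n'$ (or allow $\tilde F$ to cover the Witt Frobenius on $W_n$), and this is also where the untwist $(w^{-1})^*$ in Definition~\ref{defi:Cartier1}(b) is silently absorbed. Because dividing by $p$ loses one power of $p$, the divided Frobenius $\zeta = p^{-1}d\tilde F$ is only well-defined modulo $p^n$ once $\tilde F$ is specified modulo $p^{n+1}$; this is harmless since $Z_W/W$ is smooth, but the Frobenius lift is a $W_{n+1}$-datum, not a $W_n$-datum as written. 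The triple-overlap cocycle is not really a BCH phenomenon: the operators entering the exponentials commute because $\theta\wedge\theta = 0$, so the product of exponentials collapses to a single exponential of a sum and the cocycle reduces to additivity of Taylor differences, together with the observation that the level bound $\leq p-1$ keeps all factorials $1/k!$, $k\leq p-2$, invertible in $W_n$. Finally, the argument actually given in \cite{LSZ13} for the $W_n$-version proceeds by induction on $n$; the direct mod-$p^n$ exponential construction you outline is closer in spirit to \cite{LSZ15}, which is stated over $k$. Both routes are viable, but be aware they are not the same proof.
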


{For a $W_n$-linear flat connection with a Griffiths-transversal filtration $(E,\nabla,F)$ we write $\overline{gr}(E,\nabla,F)$ to denote the tuple $(gr_F(E),gr_F(\nabla),(E,\nabla,F)_{W_{n-1}},\id)$.
This allows one to extend the notion of periodic flat connections.}

\begin{definition}[Lan--Sheng--Zuo]\label{defi:f-periodic}
{We assume $k \supset \mathbb{F}_{p^f}$}. An $f$-periodic flat connection on $Z_W/W$ is a tuple 
$$(E_0,\nabla_0,F_0,\phi_0,E_1,\nabla_1,F_1,\dots,E_{f-1},\nabla_{f-1},F_{f-1},\phi_{f-1}),$$ 
where for all $i$ we have that $(E_i,\nabla_i,F_i)$ is a flat connection on $Z_W$ ({nilpotent of level $\leq p-2$} on the special fibre) with a Griffiths-transversal filtration $F_i$, {such that for all integers $n$ we have that $\overline{gr}_F(E_i,\nabla_i)$ belongs to $\mathcal{H}_{Z_n/W_n}$ and $\phi_i\colon C_1^{-1}(gr^F E_i,gr(\nabla_i)) \simeq (E_{i+1},\nabla_{i+1})$}.
\end{definition}

By taking an inverse limit (of $p^n$-torsion Fontaine--Lafaille modules) of Proposition 5.4 \cite{LSZ13} we obtain a mixed characteristic version of Proposition \ref{prop:lsz}. {We refer the reader to \cite[Theorem 1.4]{LSZ13} (for $k$ algebraically closed) and \cite[Lemma 1.2]{syz} for $k$ being finite.} 

\begin{theorem}[Lan--Sheng--Zuo, {Sun--Yang--Zuo}]\label{thm:lsz}
{Assume $k \supset \mathbb{F}_{p^f}$.}
\begin{itemize}
\item[(a)] There exists an equivalence between the category of $1$-periodic flat connections on $Z_W/W$ and the category of torsion-free Fontaine--Lafaille modules on $Z_W$.
\item[(b)] There exists a fully faithful functor from the category of $f$-periodic flat connections on $Z_W/W$ to the category of crystalline \'etale local systems of free $W(\mathbb{F}_{p^f})$-modules on $Z_K$.
\end{itemize}
\end{theorem}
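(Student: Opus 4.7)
The plan is to bootstrap from the $p$-torsion Fontaine-Laffaille equivalence recalled just before Proposition~\ref{prop:lsz} through successive truncations mod $p^n$, and then handle the $\F_{p^f}$-enhancement by exploiting a cyclic symmetry on a direct sum.

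For Part (a), I would construct for each $n\geq 1$ an equivalence between $1$-periodic flat connections on $Z_n/W_n$ and Fontaine-Laffaille modules on $Z_n/W_n$, extending the $n=1$ case of Subsection~4.6 of~\cite{OV07}. A $1$-periodic datum is a quadruple $(E,\nabla,F,\phi)$ with $\phi\colon C_1^{-1}(\mathrm{gr}^F E,\mathrm{gr}^F\nabla) \xrightarrow{\sim} (E,\nabla)$; the associated Fontaine-Laffaille module would be $(E,F,\varphi)$, where the divided Frobenius $\varphi$ is extracted from $\phi$ using the fact that on a graded nilpotent Higgs bundle of level $\leq p-1$ the mixed-characteristic $C_1^{-1}$ is locally computed, after choosing a Frobenius lift on an affine cover, by the classical Fontaine-Laffaille divided-power formula. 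Having verified this at $n=1$ I would promote to general $n$ by induction: both sides are torsors under a common deformation groupoid controlled by $\mathrm{Ext}^1$-groups in the category over $W_{n-1}$, and the compatibility of the mixed-characteristic $C_1^{-1}$ with $W_n \to W_{n-1}$ identifies these torsors. Passing to the inverse limit over $n$ yields the claimed equivalence between $1$-periodic connections on $Z_W/W$ and torsion-free Fontaine-Laffaille modules.

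For Part (b), given an $f$-periodic datum $(E_i,\nabla_i,F_i,\phi_i)_{i=0}^{f-1}$, set $\widetilde E = \bigoplus_i E_i$ with the assembled filtration and assembled isomorphism $\widetilde\phi$ sending the $i$-th graded summand to $E_{i+1}$ via $\phi_i$; this is a $1$-periodic object on $Z_W$ equipped with a canonical order-$f$ automorphism $\sigma$ given by cyclic permutation of the summands, which commutes with $\widetilde\phi$ by construction. Part~(a) produces a torsion-free Fontaine-Laffaille module $\widetilde M$ on $Z_W$ with a $W$-linear action of $\Z/f$ commuting with the FL Frobenius. Base changing to $\widetilde W = W(\F_{p^f})$, which contains a primitive $f$-th root of unity since $(f,p)=1$, decompose $\widetilde M \otimes_W \widetilde W = \bigoplus_{j=0}^{f-1} \widetilde M_j$ into $\sigma$-eigenspaces. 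Each $\widetilde M_j$ is a torsion-free Fontaine-Laffaille module on $Z_{\widetilde W}/\widetilde W$, and composing with Faltings's fully faithful functor (\cite{Fal88}) from FL modules to crystalline local systems of free $\widetilde W$-modules on $Z_K$ yields the desired functor. Full faithfulness is inherited from three fully faithful steps: Part~(a), eigenspace extraction (which loses no information because the remaining eigenspaces are Galois conjugate to $\widetilde M_0$), and Faltings's theorem.

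The main obstacle, it seems to me, is the assertion in Part~(a) that a Higgs-de Rham fixed-point structure $\phi$ encodes an honest Fontaine-Laffaille divided Frobenius $\varphi$, and not merely a weaker $F$-crystal structure. Over the special fibre this is the content of Subsection~4.6 of~\cite{OV07}, but lifting it to arbitrary $n$ demands that the glueing construction of $C_1^{-1}$ on $Z_n/W_n$ agrees \'etale-locally with the divided-Frobenius formula defining Fontaine-Laffaille modules; this comes down to controlling the Taylor-style transition isomorphisms between different local choices of Frobenius lift modulo $p^n$ for nilpotent Higgs fields of level $\leq p-1$, which is precisely what forces the level bound throughout. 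The secondary subtlety in Part~(b) is verifying that the $\sigma$-eigenspace decomposition respects the divided-Frobenius structure over $\widetilde W$; this reduces to functoriality of the Fontaine-Laffaille formalism under finite \'etale base change of coefficients.
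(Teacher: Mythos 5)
Your part (a) is broadly in the spirit of what the paper does: the paper obtains the statement by taking an inverse limit of Lan--Sheng--Zuo's mod $p^n$ result, the essential input being the lifted inverse Cartier transform $C_1^{-1}$ on $Z_n/W_n$ (quoted immediately before the theorem) together with Faltings's identification of strongly divisible modules with crystalline lattices; your deformation-theoretic induction is vaguer than that, but it is not where the trouble lies.

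The genuine gap is the eigenspace decomposition in part (b). First, $W(\mathbb{F}_{p^f})$ need not contain a primitive $f$-th root of unity: its roots of unity of order prime to $p$ are exactly the $(p^f-1)$-st roots, and $f \mid p^f-1$ can fail (take $f=3$, $p=2$); moreover nothing in Definition \ref{defi:f-periodic} forces $(f,p)=1$, so for $p\mid f$ no eigenspace decomposition exists over any $p$-adic coefficient ring. Second, even when a primitive $f$-th root of unity $\zeta$ is available, the decomposition does not do what you want: the Fontaine--Laffaille divided Frobenius $\varphi$ is semilinear over the Frobenius $F$ of the coefficients and a Teichm\"uller root of unity satisfies $F(\zeta)=\zeta^p$, so if $\sigma$ is the ($W$-linear) cyclic permutation and $\sigma v=\zeta^j v$, then $\sigma(\varphi v)=\varphi(\sigma v)=\zeta^{pj}\varphi(v)$; thus $\varphi$ maps the $\zeta^j$-eigenspace to the $\zeta^{pj}$-eigenspace, the eigenspaces are permuted rather than preserved, and an individual $\widetilde M_j$ is not a Fontaine--Laffaille module. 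Third, base-changing the scheme to $Z_{\widetilde W}$ would in any case produce local systems of $\pi_1(Z_{\widetilde K})$ rather than of $\pi_1(Z_K)$. The mechanism actually used by Lan--Sheng--Zuo is different: one keeps the whole rank-$rf$ object $\widetilde M$, applies the Fontaine--Laffaille/Faltings functor to obtain a crystalline $\mathbb{Z}_p$-local system $\mathbb{V}$ of rank $rf$ on $Z_K$, and then observes (after enlarging $k$ so that $W(\mathbb{F}_{p^f})\subset W$) that the semilinear scalar multiplications $(\lambda, F\lambda,\dots,F^{f-1}\lambda)$ on the summands $E_0,\dots,E_{f-1}$ commute with the structure isomorphism $\widetilde\phi$ precisely because $C_1^{-1}$ is Frobenius-semilinear; this yields a ring homomorphism $W(\mathbb{F}_{p^f})\to\End(\mathbb{V})$ exhibiting $\mathbb{V}$ as a local system of free $W(\mathbb{F}_{p^f})$-modules of rank $r$. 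No roots of unity and no splitting are needed, and full faithfulness is then inherited directly from the rank-$rf$ statement.
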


Torsion-free Fontaine--Lafaille modules are also known as strongly $p$-divisible lattices of an $F$-isocrystal. In light of this, we obtain a criterion for a $W$-family of flat connections $(E_W,\nabla_W)$ to give rise to an $F$-isocrystal.

\begin{corollary}\label{cor:f-periodic}
{Assume $k \supset \mathbb{F}_{p^f}$} and let $(E_W,\nabla_W)$ be a flat connection on $Z_W/W$ which is $f$-periodic. Then the formal flat connection $(\hat E, \hat \nabla)$ obtained by pullback to the formal completion of $X_W$ is an isocrystal with Frobenius structure.
\end{corollary}
}

\subsection{Higgs-de Rham flows for rigid flat connections}

Let $T$ be a smooth scheme over $\C$ and $\lambda\colon T \to \Ab^1_{\Cb}$ a regular function. A \emph{$\lambda$-connection} on a vector bundle $N$ is a $\Cb$-linear map of sheaves
$$D \colon N \to N \otimes \Omega_T^1,$$
such that for every open subset $U \subset T$ and sections $s \in N(U)$ and $f \in \Oo_T(U)$ we have 
$$D(fs) = fD(s) + \lambda s \otimes df.$$
We say that $D$ is integrable (or flat), if it satisfies $D^2 = 0$.

There are two special cases which are of particular interest to us: for $\lambda= 0$ a flat $\lambda$-connection amounts to a Higgs field, and for $\lambda = 1$, a flat $\lambda$-connection is a flat connection in the classical sense.

For a smooth and projective scheme $X/\Cb$ and a torsion line bundle $L\in \Pic(X)$ we denote by $\MHod(X/\Cb,L,r)$ the moduli space of $P$-stable pairs $(N,D)$, where $N$ is a rank $r$ vector bundle on $X$ and $D$ a $\lambda$-connection for $\lambda \in \Cb$. We refer the reader to \cite[p. 87]{Sim94} for more details. By definition, there is a morphism
\begin{equation}\label{eqn:A1}
\MHod(X/\Cb,L,r) \to \Ab^1_{\Cb},
\end{equation}
such that we have isomorphisms
$$\MHod(X/\Cb,L,r) \times_{\Ab^1_{\Cb}} \{0\} \simeq \MDol(X/\Cb,L,r)\text{ and }\MHod(X/\Cb,L,r) \times_{\Ab^1_{\Cb}} \{1\} \simeq \MdR(X/\Cb,L,r).$$

We define $\MHodrig(X/\C,L,r)  \subset \MHod(X/\C, L,r)$ to be the maximal open subset where \eqref{eqn:A1} is quasi-finite (see Definition \ref{defi:rig}).

As before, we use the notation $[(N,D)]$ to denote the point of the moduli space $\MHod(X/\C, L,r)$ induced by a $\lambda$-connection $(N,D)$ on $X$.
\begin{lemma} \label{lem:simpsonsplit}
 The morphism  $\MHodrig(X/\C, L,r) \to \A^1$ is finite, flat, and splits $\G_m$-equivariantly as 
 $$\MHodrig(X/\C, L,r) \cong \MDolrig(X/\C, L,r) \times_{\C} \A^1  \xrightarrow{\rm ( (V, \theta), \lambda)  \mapsto \lambda} \A^1.$$
 For $[(N,D)]$ a section, then $[(N,D)]_{1}\in \MdR(X/\C, L,r)$ is the moduli point of a complex variation of Hodge structure, with $F$-filtration $F^i\subset F^{i-1} \subset \ldots F^0=N$ with Griffiths transversality $\nabla\colon F^i\to  \Omega^1_X\otimes_{\sO_X} F^{i-1}$, and  $[(N,D)]_{0}\in \MDol(X/\C,L,r)$ is the moduli point of the associated Higgs bundle 
 $$\big(V=gr_F E, gr_F \nabla\colon \bigoplus_i (gr_F^i E\to \Omega^{1}_X\otimes_{\sO_X} gr_F^{i-1} E)\big).$$ 
\end{lemma}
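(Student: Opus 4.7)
The proof rests on Simpson's $\G_m$-action on $M_{Hod}(X/\C, L)(r)$ given by $t \cdot [(N, D, \lambda)] = [(N, tD, t\lambda)]$. This covers the scaling action on $\A^1$, preserves the fiberwise rigid locus (rescaling $D$ by $t \neq 0$ is an isomorphism of the corresponding component of the moduli space), and over $\G_m \subset \A^1$ yields an equivariant trivialisation $M_{Hod}^{\rm rig}|_{\G_m} \cong M_{dR}^{\rm rig} \times_\C \G_m$ via $(N, D, \lambda) \mapsto ((N, \lambda^{-1} D), \lambda)$. Generically over $\A^1$ the map is therefore finite and flat, and it remains only to extend this splitting across $\lambda = 0$ and identify the central fiber with $M_{Dol}^{\rm rig}$.

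To this end, for each $[(N, \nabla)] \in M_{dR}^{\rm rig}(\C)$ I would invoke Simpson's theorem \cite{Sim92} that every rigid flat connection underlies a complex variation of Hodge structure, producing a Hodge filtration $F^\bullet N$ on $N$ with Griffiths transversality $\nabla(F^p) \subset F^{p-1} \otimes \Omega^1_X$. Form the Rees sheaf on $X \times \A^1$
\[ \xi_F \ = \ \sum_p \lambda^p F^p N \ \subset \ N \otimes_\C \C[\lambda, \lambda^{-1}], \]
equipped with the operator $\lambda \nabla$. Griffiths transversality forces $\lambda\nabla(\xi_F) \subset \xi_F \otimes \Omega^1_X$, so $(\xi_F, \lambda\nabla)$ is an $\A^1$-family of integrable $\lambda$-connections on $X/\C$ with determinant $L$ (since taking determinants commutes with the Rees construction, and for $L$ with its canonical connection the Rees family is constant). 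Its fibers are $(N, \nabla)$ at $\lambda = 1$, $(N, t\nabla)$ at $\lambda = t \neq 0$, and the associated graded Higgs bundle $(\gr_F N, \gr_F \nabla)$ at $\lambda = 0$; stability is preserved along all fibers by rigidity/irreducibility, so the family descends to a section $\sigma_{(N, \nabla)}\colon \A^1 \to M_{Hod}^{\rm rig}(X/\C, L)(r)$ with the prescribed values at $\lambda = 0, 1$.

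To finish, I would apply Simpson's non-abelian Hodge correspondence, which provides a bijection $M_{dR}^{\rm rig}(\C) \cong M_{Dol}^{\rm rig}(\C)$ matching each rigid $(N, \nabla)$ with the Higgs bundle of its CVHS, namely $\sigma_{(N,\nabla)}(0)$. The sections $\{\sigma_{(N, \nabla)}\}$ are then in natural bijection with either finite set, and their images exhaust $M_{Hod}^{\rm rig}$ fiberwise: over $\G_m$ by the trivialisation, and over $\lambda = 0$ by filling out $M_{Dol}^{\rm rig}$. This yields the desired splitting $M_{Hod}^{\rm rig} \cong M_{Dol}^{\rm rig} \times_\C \A^1$, with finiteness and flatness over $\A^1$ automatic, and the description of $[(N,D)]_1$ as a CVHS and $[(N,D)]_0$ as its associated graded is built into the Rees construction. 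The main obstacle I anticipate is checking that the Rees family is $P$-stable along all fibers (so that it really defines a morphism into the coarse moduli scheme, not merely a set-theoretic map) and that $\sigma_{(N,\nabla)}$ is independent of the choice of Hodge filtration; this second point reduces to the intrinsic characterisation of $\sigma_{(N,\nabla)}(0)$ as the $\G_m$-orbit limit of $[(N, \nabla, 1)]$ in $M_{Hod}$, which depends only on the orbit and not on $F^\bullet$.
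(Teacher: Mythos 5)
Your route via explicit Rees sections is a genuinely different approach from the paper's. The paper over $\G_m$ uses the same scaling trivialisation, but at $\lambda=0$ it invokes Simpson's structure theorem for $M_{Hod}$ near the $\G_m$-fixed locus \cite[Thm.~9.1]{Sim97}, which gives \'etale local triviality (and hence finiteness and flatness of the rigid locus over $\A^1$ as well as the splitting), then cites \cite[Lem.~4.1]{Sim92} and \cite[Lem.~7.2]{Sim97} for the identification of the fibres at $\lambda=1$ and $\lambda=0$. Your construction makes the fibre identifications manifest without the second pair of citations, which is a genuine advantage in transparency.

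However, there is a scheme-theoretic gap that Simpson's Theorem 9.1 is quietly handling in the paper's version. Each Rees section $\sigma_{(N,\nabla)}\colon \A^1 \to M_{Hod}$ is attached to a $\C$-point of $M_{dR}^{\rm rig}$, so the union of their images only recovers the \emph{reduced} structure along the rigid locus. The schemes $M_{dR}^{\rm rig}$ and $M_{Dol}^{\rm rig}$ are defined as the closed subschemes of isolated points, which for a merely rigid (not cohomologically rigid) connection can carry non-reduced structure; the asserted isomorphism $M_{Hod}^{\rm rig} \cong M_{Dol}^{\rm rig} \times_{\C} \A^1$ as schemes is therefore not a consequence of a bijection on $\C$-points together with the observation that a disjoint union of sections is finite flat over $\A^1$. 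To close the gap you would either run the Rees construction for families over artinian thickenings of rigid points (which requires the Hodge filtration to deform along the thickening) or quote the \'etale local product structure of $M_{Hod}$ from \cite[Thm.~9.1]{Sim97}, as the paper does. The two concerns you flag at the end (stability along all fibres, and independence of the choice of $F^{\bullet}$) are real, and your sketched resolutions are correct, but the non-reducedness issue above is the one the argument would actually founder on.
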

\begin{proof}
By construction $\MHod(X/\C,L,r)\times_{\A^1} \G_m \to \G_m$ splits as $$\MHod(X/\C,L,r)\times_{\A^1} \G_m  \xleftarrow{ \cong  \  ((E,\nabla), \lambda) \mapsto    (E,\lambda \nabla) } \MdR(X/\C,L,r)\times_k  \G_m \xrightarrow{\rm  ((E,\nabla), \lambda) \mapsto \lambda  } \G_m,$$ where 
$\MdR(X/\C,L,r)$ is the fibre at $\lambda=1$.   
On the other hand, by \cite[Theorem 9.1]{Sim97}, at a complex point $x\in \MDol(X/\C,L,r)$, the fibre at $\lambda=0$,  $\MHod(X/\C,L,r)$ is \'etale locally isomorphic to the product  of
$\MDol(X/\C,L,r)$ with $\A^1$.  This finishes the proof of the first part. As for the second part, this is an application of  \cite[Lemma 7.2]{Sim97}.
\end{proof}

\medskip

{Consider an arithmetic scheme $S$ and a smooth model $(X_S, L_S)$ as in Lemma \ref{lemma:arithmetic_models}. For} every $\lambda \colon S \to \mathbb{A}^1,$ Langer's construction \cite[Theorem 1.1]{Lan14} yields a coarse moduli space of semistable $\lambda$-connections $\mathcal{M}_{\lambda}(X_S/S)$ defined over $S$. In particular we can apply this to the case 
$$\lambda = p_{\mathbb{A}^1}\colon S \times \mathbb{A}^1 \to \mathbb{A}^1$$
and obtain an $S$-model of Simpson's Hodge moduli space $\MHod(X_S/S, L_S) \to S \times \Ab^1$. 
In the following proposition we denote by $d$ the order of the torsion line bundle $L$ on $X$.

\begin{proposition}[Nice models 2]\label{prop:nice_models2}
For every positive integer $r$ there exists an affine arithmetic scheme $S$ and a model $(X_S,L_S)$ of $(X,L)$, such that the following properties are satisfied:
\begin{enumerate}
\item[(a)] all properties of Proposition \ref{prop:nice_models},

\item[(b)] there are finitely many $\lambda$-connections $(N_S^i,D_S^i)_{i=1,\dots,M}$ on $X_S \times_{S}\Ab^1_S$ with respect to $\lambda = \mathsf{pr}_2$, and furthermore we assume that $(N_S^i,D_S^i)$ is geometrically $P$-stable,

\item[(c)] the $\lambda$-connections of (b) give rise to a bijection
$$\bigsqcup_{i=1}^M [(N_S^i,D^i_S)](|S|) = \bigsqcup_{a=0}^{d-1}|\MHodrig(X,L,\leq r)|.$$
\end{enumerate}
\end{proposition}

\begin{proof}
This can be shown using the same techniques as for the proof of Proposition \ref{prop:nice_models}.
\end{proof}

  \medskip
  
  Henceforth we choose an arithmetic model $(X_S,L_S)$ as in Proposition \ref{prop:nice_models2}. For every closed point $s \in S$ we apply the theory of Higgs-de Rham flows as recalled in Subsection \ref{LSZ}. 

  We fix a closed point $s$ of the scheme $S$, and also choose a lift to a $W_2(k(s))$-point of $S$ (using that $S$ is smooth over $\Spec \Zb$).  We denote by $n_{L}$ the number of rank $r$ rigid flat connections on $X$ with determinant isomorphic to $L^a$ for $a=0,\dots,d-1$, and choose a bijection $\{1,\dots,n_L\} \simeq \bigsqcup_{a=0}^{d}\MdRrig(X/\Cb,L^a,R)(\Cb)$ and define a map
   \ga{sigma}{\sigma: \{1,\ldots, n_L\}\to \{1,\ldots, n_L\}}
   as follows:    
  For $i\in \{1,\ldots, n_L\}$, 
   we  set {$(N^i_S, D^i_S)_{0_S}\times_S 0= (V_s^i,\theta_s^i) \in \sqcup_{a=0}^{d-1} \MDolrig(X/\C, L,r)$}.  One first defines 
   \ga{prime}{(V_s^{'i},\theta_s^{'i})= w^* (V_s^i,\theta_s^i),}
  which by  Lemma \ref{lemma:rigidmeansrigid}  is rigid stable, then one defines
  $C^{-1}(V^{'i},\theta^{'i})$
  which by Proposition \ref{Crigid} is a stable rigid integrable connection. Thus, there is a uniquely defined 
  $\sigma(i)\in \{1,\ldots, n_L\}$ such that 
  \ga{dRH}{C^{-1}(V_s^{'i},\theta_s^{'i})=(E_s^{\sigma(i)}, \nabla_s^{\sigma(i)}).}
  
  \begin{lemma} \label{lem:bij}
  The map $\sigma$ is a bijection.  
  \end{lemma}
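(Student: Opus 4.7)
The plan is to show $\sigma$ is injective; since it maps the finite set $\{1,\ldots,N\}$ to itself, bijectivity will then follow automatically.

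First I would verify that $\sigma$ is well-defined. By construction each $(V_s^i,\theta_s^i)$ is a stable rigid Higgs bundle on $X_s$ (being the reduction mod $s$ of a section of the rigid locus of the Dolbeault moduli, using Lemma~\ref{lem:simpsonsplit}). Applying Lemma~\ref{lemma:rigidmeansrigid} to $w\colon X_s' \to X_s$ shows that the Frobenius twist $(V_s^{'i},\theta_s^{'i})=w^*(V_s^i,\theta_s^i)$ is again stable and rigid. Assuming $\mathrm{char}\,k(s) > D$ (where $D$ is the integer from Proposition~\ref{Crigid}), the inverse Cartier transform $C^{-1}(V_s^{'i},\theta_s^{'i})$ is then a stable rigid flat connection on $X_s$. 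Its determinant is computed from the behavior of $w^*$ and $C^{-1}$ on line bundles with finite-order flat structure (recalled just after Theorem~\ref{thm:OV}): starting with $L_s^a$, we obtain $L_s^{pa}$, which is again one of the $L_s^b$ with $0 \le b \le d-1$. Thus $C^{-1}(V_s^{'i},\theta_s^{'i})$ lies in $\sqcup_{a=0}^{d-1} M^{\mathrm{rig}}_{dR}(X_s/s, L_s^a)(r)(k(s))$, and by the uniqueness statement after \eqref{5.3} (i.e., the fact that different $i$'s yield different moduli points over $s$), it coincides with a unique $(E_s^{\sigma(i)},\nabla_s^{\sigma(i)})$.

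Next I would prove injectivity. Suppose $\sigma(i)=\sigma(j)$. Then $C^{-1}(V_s^{'i},\theta_s^{'i})$ and $C^{-1}(V_s^{'j},\theta_s^{'j})$ are isomorphic as flat connections on $X_s$. Because the Ogus--Vologodsky functor $C^{-1}$ is an equivalence of categories (Theorem~\ref{thm:OV}), and because $w^*$ is a tautological equivalence of categories (it is simply pull-back along an isomorphism of schemes), it follows that $(V_s^i,\theta_s^i) \cong (V_s^j,\theta_s^j)$ as Higgs bundles on $X_s$. By the separation property of the rigid locus noted in the paragraph containing \eqref{5.3} --- namely, distinct indices yield distinct moduli points in $M_{Dol}(X_s/s,L_s^a)(r)$ after shrinking $S$ --- this forces $i=j$.

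Finally, since $\sigma$ is an injective self-map of the finite set $\{1,\ldots,N\}$, it is a bijection. The main subtlety, and essentially the only nontrivial ingredient, is ensuring that the whole composition $C^{-1}\circ w^*$ preserves the rigid locus and the list of determinants; both inputs were arranged in the previous subsections (Lemma~\ref{lemma:rigidmeansrigid} and Proposition~\ref{Crigid}), so the argument itself is short. No fixed-point or orbit-length analysis is needed at this stage: that will be used later when $\sigma$ is iterated to produce periods for the Higgs--de Rham flow.
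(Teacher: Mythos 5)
Your proof is correct, and it takes a mildly different route from the paper's. The paper's proof is a one-liner: it exhibits an explicit inverse of $\sigma$ by running the construction backwards, i.e.\ sending $(E_s^j,\nabla_s^j)$ to $w^{-1*}C(E_s^j,\nabla_s^j)$, using that the forward Cartier transform $C$ also carries stable rigid connections to stable rigid Higgs bundles (which follows from the fact that $C$ and $C^{-1}$ are mutually inverse equivalences, combined with Lemma~\ref{lemma:rigidmeansrigid}). You instead prove injectivity — if $\sigma(i)=\sigma(j)$ then, since $C^{-1}$ and $w^*$ are equivalences, $(V_s^i,\theta_s^i)\cong(V_s^j,\theta_s^j)$, whence $i=j$ by the separatedness of the indexing arranged after \eqref{5.3} — and then invoke the pigeonhole principle for a self-map of a finite set. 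What your route buys is that you never need to argue that the \emph{forward} transform $C$ preserves rigidity and stability; you only use full faithfulness of $C^{-1}$ on isomorphism classes, which is immediate from Theorem~\ref{thm:OV}. What the paper's route buys is an explicit description of $\sigma^{-1}$, which is harmless to have but not needed later. Your preliminary verification of well-definedness (rigidity and stability preserved via Lemma~\ref{lemma:rigidmeansrigid} and Proposition~\ref{Crigid}, and the determinant $L_s^a\mapsto L_s^{pa}$ staying in the fixed finite list) is exactly the content the paper establishes in the paragraphs preceding the lemma, so both arguments rest on the same inputs.
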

  \begin{proof}
  Clearly,  we can reverse the argument: starting with $(E_s^j,\nabla_s^j)$, then $C(E_s^j,\nabla_s^j)$ again is stable and rigid by Lemma \ref{lemma:rigidmeansrigid}, and thus $w^{-1*}C(E_s^j,\nabla_s^j)$ as well. 
  \end{proof}
  Fixing $i\in \N, 1\le i\le n_L$, we define a Higgs-de Rham flow (for the definition, see \cite[Definition 1.1]{LSZ13}) using $\sigma$ as follows:
  \ga{flow}{ \xymatrix{ &  (E_s^{\sigma(i)}, \nabla_s^{\sigma(i)}) \ar[dr] & & (E_s^{\sigma^2(i)}, \nabla_s^{\sigma^2(i)}) \ar[dr] &  \\
(V_s^i, \theta_s^i) \ar[ur] &  & (V_s^{\sigma(i)}, \theta_s^{\sigma(i)}) \ar[ur] & & \ldots
  }}
  The downwards arrows are obtained by taking the graded associated to the restriction to $X_s$ of the  Hodge filtration on the rigid connections. 
  
    \begin{lemma} \label{lem:per}
    \begin{itemize}
    \item[1)]
  The Higgs-de Rham flow \eqref{flow} is periodic of period $f_i$ which is the order of the $\sigma$-orbit of $i$.
  \item[2)] It does not depend on the choice of the $W_2(k(s))$-point of $S$ chosen.
  \end{itemize}
  
  \end{lemma}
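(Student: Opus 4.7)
\emph{Plan.}
The plan is to identify the diagram~\eqref{flow} as an instance of the Higgs--de Rham flow of Definition~\ref{defi:HdR_flow} and then to read off both assertions.

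First I would verify that the downward arrows in~\eqref{flow} are genuine associated gradeds of Griffiths-transversal filtrations. This is where Lemma~\ref{lem:simpsonsplit} enters: the section $(N_S^j, D_S^j)$ of the Hodge moduli space restricts at $\lambda=1$ to the complex variation of Hodge structure underlying $(E^j, \nabla^j)$, whose Hodge filtration has associated graded equal to $(V^j, \theta^j)$ at $\lambda=0$. Reducing this structure along the chosen $W_2(k(s))$-point of $S$ equips $(E_s^{\sigma^k(i)}, \nabla_s^{\sigma^k(i)})$ with a Griffiths-transversal filtration $F_k$ whose associated graded is $(V_s^{\sigma^k(i)}, \theta_s^{\sigma^k(i)})$. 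The upward arrows are then given by the isomorphisms~\eqref{dRH} that define $\sigma$, absorbing the $\iota$-twist of Definition~\ref{defi:Cartier1}(b) into a harmless sign on the Higgs field. Granting these identifications, the axioms of Definition~\ref{defi:HdR_flow} are satisfied, and assertion~(1) reduces to Lemma~\ref{lem:bij}: since $\sigma$ is a bijection of the finite set $\{1, \dots, N\}$, the orbit of $i$ has some finite cardinality $f_i$, so $\sigma^{f_i}(i)=i$ and the diagram closes up after $f_i$ steps.

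For assertion~(2), I would observe that the sections $(N_S^j, D_S^j)$, their reductions $(V_s^j, \theta_s^j)$ and $(E_s^j, \nabla_s^j)$ on $X_s$, and the Griffiths-transversal filtrations they carry, are all extracted from the $S$-model alone and are therefore independent of the chosen $W_2(k(s))$-lift. The only ingredient that could \emph{a priori} vary with the lift is the bijection $\sigma$, through its definition via $C^{-1}$. To eliminate this dependence I would use that the set of $W_2(k(s))$-lifts of $s$ inside $S$ is an affine space (a torsor under the tangent space at $s$ of the mod-$p$ fibre of $S \to \Spec \Z$), hence connected, while $\sigma(i)$ takes values in the discrete finite set $\{1, \dots, N\}$; provided $\sigma$ varies algebraically in the lift, it must be constant.

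The main obstacle lies in making this continuity precise. My approach would be to run the Ogus--Vologodsky construction relatively, using the smooth family $X_S/S$ itself to furnish a canonical lift over an infinitesimal $W_2$-neighbourhood of $s$ in $S$; invoking the mixed-characteristic extension of the Cartier transform from~\cite[Thm.~4.1]{LSZ13} then yields a single algebraic comparison between the $S$-families $\{(V^j_S, \theta^j_S)\}$ and $\{(E^j_S, \nabla^j_S)\}$, specializing at each $W_2(k(s))$-point to the Cartier transform used in defining $\sigma$. Because the rigid locus is finite over $S$, the induced map on closed points is locally constant, hence constant on the connected set of lifts, proving that $\sigma$ is lift-independent.
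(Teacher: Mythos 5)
Your proposal is correct and follows essentially the same route as the paper: part (1) is immediate from the construction of the flow via the bijection $\sigma$, and part (2) rests on the fact that the $W_2(k(s))$-lifts form a connected (affine) parameter space while the rigid moduli are $0$-dimensional, so the induced family of identifications is constant. The paper phrases the connectedness step slightly differently — it parametrizes all $W_2(k(s))$-lifts of $X_s$ by the affine space given by Kodaira--Spencer and invokes Proposition~\ref{Crigid} together with $C^{-1}$ to get a family of isomorphisms of moduli spaces over that affine space — but this is the same "connected base, discrete target" argument you give.
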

  \begin{proof}
  The point 1) holds by definition, since the map $\sigma$ is shown to be a bijection in Lemma \ref{lem:bij}. The second assertion can be seen to be true as follows. We fix a $W_2(k(s))$-lift of $X_s$. Its Kodaira--Spencer class   endows  the set of equivalence classes of $W_2(k(s))$-lifts of $X_s$ with $k(s)$-points of an affine space $A$. Combining Proposition \ref{Crigid} with the operator $C^{-1}$,  one obtains an $A$-family of isomorphisms of moduli spaces
$$\MdRrig(X_s/s, L_s^p,r)  \times_{k(s)} A\simeq \MDolrig(X'_s/s, L'_s,r) \times_{k(s)} A.$$
Since the moduli spaces are $0$-dimensional, the resulting bijection of closed points is independent of the chosen $W_2$-lift.
\end{proof}

We fix now a $W(k(s))$-point of $S$, yielding $X_{W(k(s))}$. For an irreducible rigid $(E_S,\nabla_S)$ we show that $(E_{W(k(s))},\nabla_{W(k(s))})$ is $f$-periodic (see Definition \ref{defi:f-periodic}) and therefore we conclude using Corollary \ref{cor:f-periodic} that the isocrystal $(E,\nabla)$ has a Frobenius structure (after) an unramified field extension of $K(s) = \mathsf{Frac}\; W(k(s))$.
%
  \begin{proof}[Proof of Theorem \ref{thm:Fstr}]   
  
For the duration of this proof we introduce the shorthand $\MdRrig = \sqcup_{a=0}^{d-1}\MdRrig(X_s/s, L^a_s,r)$. We will also use $\MDolrig$ and $\MHodrig$ to denote disjoint unions as the one above.
  Recall that we have chosen a finite type scheme $S$ over $\mathbb{Z}$ such that every rigid connection has an $S$-model. { We choose $S$ as in Proposition \ref{prop:nice_models2}. In particular,} we may assume that for every $s \in S$, every rigid connection $(E_s,\nabla_s)$ over $X_s$ is the restriction of a unique $S$-model of a rigid connection. In particular we have that $\MdRrig(X_S/S, L_S)^{\rm red} \to S$ is an isomorphism of schemes on every connected component. Using that $W(k(s))$ is reduced, we obtain:
  
  \begin{claim}\label{claim:bijection} For every $s \in S$ with residue field $k(s)$ 
the closed embedding $s\to {\Spec}W(k(s))$ induces a bijection
     $$\MdRrig(k(s)) = \MdRrig(W(k(s))).$$ 
     \end{claim}
Furthermore, recall from Lemma \ref{lem:simpsonsplit} that we have a $\G_m$-equivariant isomorphism 
$$\MHodrig \cong \MDolrig \times \Ab^1.$$    
For a positive integer $i$ we denote by $W_i = W_i(k(s))$ the $i$-truncated Witt ring. The isomorphism above implies the assertion: for every $y \in \MdRrig(W_i(k(s)))$ there exists a unique $\G_m$-equivariant section $\mathbb{A}^1_{W_i(k(s))} \to \MHodrig$, sending $1 \in \mathbb{A}^1_{W(k(s))}$ to $y$. Similarly, every $\G_m$-fixpoint $z \in \MDolrig(W_i)^{\G_m}$ extends to a unique $\G_m$-equivariant section $\mathbb{A}^1_{W_i(k(s))} \to \MHodrig$ sending $0$ to $z$.
This yields:
\begin{claim}\label{claim:F_unique}
A rigid $W_i(k(s))$-family of stable flat connections $(E,\nabla) \in \MdRrig$ has a unique Griffiths-transversal filtration $F$ (up to shifting the filtration). And, a rigid $W_i(k(s))$-family of stable Higgs bundles $(V,\theta)$ is isomorphic to the associated graded of a Hodge bundle $(E,\nabla,F)$, which is unique up to isomorphism.
\end{claim} 

For a positive integer $i>1$ we let $\mathcal{H}_i$ be the set of isomorphism classes of tuples $(V,\theta,\bar{E},\bar{\nabla},\bar{F},\bar{\phi})$ over $X_i/W_i$ as in Definition \ref{defi:h}, with the additional assumption that the underlying (ungraded) Higgs bundle $(V,\theta)$ is rigid and stable, that is, represents a $W_i$-point of $\MDolrig$.

\begin{claim}\label{claim:bijection2}
The forgetful map $\mathcal{H}_i \to \MDolrig(X_S/S)(W_i)$ is a bijection.
\end{claim}

\begin{proof}
Given $(\bar{V},\bar{\theta})=(V,\theta) \times_{W_i} W_{i-1}$, we apply Claim \ref{claim:F_unique} to deduce that there is a (up to isomorphism) unique $(\bar{E},\bar{\nabla},\bar{F})$ such that we have an isomorphism $\bar{\phi}$ between $(\bar{E},\bar{\theta})$ and the associated graded of $(\bar{E},\bar{\nabla},\bar{F})$.
\end{proof}   

We therefore see that Lan--Sheng--Zuo's functor $C_i^{-1}$ (see Theorem \ref{thm:cn} above) gives rise to a map 
$$C^{-1}_i\colon \MDolrig(W_i) \to \MdRrig(W_i).$$

Furthermore, Claim \ref{claim:bijection} yields a map $\overline{\gr}\colon \MdRrig(W_i) \to \MDolrig(W_i)$ which corresponds to the construction $\overline{\gr}(E,\nabla,F)$ recalled below Theorem \ref{thm:cn}. This is simply the case, because we have a unique Griffiths-transversal filtration $F$ for every rigid $W_i$-family $(E,\nabla)$.

This shows that the Higgs-de Rham flow for rigid flat connections actually corresponds to a self-map of sets
$$\mathrm{fl}_i\colon \MdRrig(W_i) \to \MdRrig(W_i).$$

We have 
$$\MdRrig(X_S/S)(W(k(s))) = \varprojlim_i \MdRrig(X_S/S)(W(k(s))/\mathfrak{m}^i).$$
We introduce the notation $M_i = \MdRrig(X_S/S)(W(k(s))/\mathfrak{m}^i)$, $M_W = \MdRrig(X_S/S)(W(k(s)))$ and 
$M_{i,0} = \bigcap_{j > i}\mathrm{im}(M_j \to M_i)$.

An elementary argument for inverse limits shows that 
$$M_W = \varprojlim_i M_{i,0}.$$

\begin{claim}
The map of sets $M_{i+1,0} \to M_{i,0}$ is a bijection for all $i > 0$. For all $i >0$, the subset $M_{i,0}$ is preserved by the self-map ${\mathrm{fl}_i}\colon M_i \to M_i$.
\end{claim}

\begin{proof}
We have a commutative diagram
\[
\xymatrix{
M_W \ar@{->>}[r] \ar[rd]_{\simeq} & M_{i,0} \ar[d] \\
& M_1. 
}
\]
Recall that $M_1 = \MdRrig(X_S/S)(k) = M_W$ (see Claim \ref{claim:bijection}). Since the map $M_W \to M_{i,0}$ is surjective by construction, and injective by commutativity of the diagram, we see that $M_W \to M_{i,0}$ is a bijection for all $i > 0$. The commutative diagram
\[
\xymatrix{
M_W \ar[r]^{\simeq} \ar[rd]_{\simeq} & M_{i+1,0} \ar[d] \\
& M_{i,0}
}
\]
shows that $M_{i+1,0} \to M_{i,0}$ is bijective.

We turn to the proof of the second assertion: the inductive nature of $\mathrm{fl}_i$ reveals that 
\[
\xymatrix{
M_{i+1} \ar[r]^{\mathrm{fl}_{i+1}} \ar[d] & M_{i+1} \ar[d] \\
M_i \ar[r]^{\mathrm{fl}_i} \ar[r] & M_i
}
\]
commutes. This shows that for every $j > i$, the image of $M_j \to M_i$ is preserved by $\mathrm{fl}_i$.
Since $M_{i,0} = \bigcap_{j > i} \mathrm{im}(M_j \to M_i)$ we see that $M_{i,0}$ is preserved by $\mathrm{fl}_i$.
\end{proof}

This shows that length $f$ orbits under $\mathrm{fl}_i$ in $M_{i,0}$ are in bijection with the $f$-periodic Higgs-de Rham flows over $k(s)$ constructed in Lemma \ref{lem:per}. Furthermore, since $M_W \to M_{i,0}$ is a bijection, we deduce that every $W$-family of stable rigid flat connections $(E,\nabla)$ of rank $r$ gives rise to a periodic Higgs-de Rham flow. Since there are only finitely many rigid flat connections of rank $r$ there exists a positive integer $f_r$ such that the period length $f$ of every rigid rank $r$ connection divides $f_r$. An unramified field extension of $K(s) = \mathsf{Frac} \; W(k(s))$ of degree $f_r$ satisfies the conclusion of Theorem \ref{thm:Fstr} by virtue of Corollary \ref{cor:f-periodic}.
\end{proof}


  \begin{corollary} \label{cor:mot}
  Let $X$ be a smooth projective variety over $\C$ and let $C\hookrightarrow X$ be a complete intersection of ample divisors of dimension one. We denote by $X_S$ an arithmetic model as in Proposition \ref{prop:nice_models2} such that also $C_S$ has a model over $S$. Then there is a non-empty open subscheme $S'\hookrightarrow S$ such that for all closed points $s\in S$,  all $W(k(s))$-points of $S'$, 
  there is a  projective morphism $f_s: Y_s\to C^0_s$ on a dense open $C^0_s\hookrightarrow C_s$ such that the $F$-overconvergent isocrystal $(E_{K(s)}, \nabla_{K(s)})|_{C_{K(s)}}$  is, over a finite extension of $k(s)$,  a subquotient of the Gau{\ss}-Manin $F$-overconvergent isocrystal 
  $R^if_{s*} (\sO_{Y_s}/ \bar \Q_p)$ for some $i$. 
  \end{corollary}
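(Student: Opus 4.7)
The plan is to deduce geometric origin of the $F$-isocrystal upon restriction to the curve $C_s$ by combining the $F$-structure provided by Theorem~\ref{thm:Fstr} with the theory of companions on curves over finite fields and the global Langlands correspondence of L.~Lafforgue.

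First I would shrink $S$ so that for every closed point $s\in S$ and every $W(k(s))$-point of $S$, the $F$-isocrystal $(E_{K_s},\nabla_{K_s})$ is defined. Since $C\hookrightarrow X$ is a complete intersection of ample divisors on the smooth projective variety $X$, the Lefschetz-type theorem for fundamental groups gives a surjection $\pi_1(C)\twoheadrightarrow \pi_1(X)$, so that the restriction of the irreducible local system underlying $(E,\nabla)$ to $C$ remains irreducible. After further shrinking $S$, irreducibility persists upon reduction mod $p$, and $(E_{K_s},\nabla_{K_s})|_{C_{K_s}}$ is an irreducible $F$-overconvergent isocrystal on $C_s$ with torsion determinant. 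Abe's theorem on the existence of $\ell$-adic companions for overconvergent $F$-isocrystals on smooth curves over finite fields (the $p$-to-$\ell$ half of Theorem~\ref{thm:comp}) then yields, for every prime $\ell\neq p$, a dense open $C_s^0\subset C_s$ and an irreducible lisse $\bar\Q_\ell$-sheaf $\mathcal L$ on $C_s^0$ whose Frobenius characteristic polynomials at all closed points match those of $(E_{K_s},\nabla_{K_s})|_{C_{K_s}}$; after a Tate twist, $\mathcal L$ is pure of weight zero and has finite-order determinant.

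Next I would combine L.~Lafforgue's global Langlands correspondence for $GL_r$ over function fields with the ensuing realization (in the style of Drinfeld) of such irreducible $\ell$-adic sheaves inside Gauss--Manin local systems. After possibly shrinking $C_s^0$ and replacing $k(s)$ by a finite extension, this exhibits $\mathcal L|_{C_s^0}$ as a subquotient of $R^i f_* \bar\Q_\ell$ for some smooth projective morphism $f\colon Y_s\to C_s^0$ and some integer $i$. By the compatibility of companions with Gauss--Manin constructions (Katz-Messing, Berthelot), the $p$-adic companion of $R^i f_* \bar\Q_\ell$ is the overconvergent $F$-isocrystal $R^i f_{s*}(\sO_{Y_s}/\bar\Q_p)$. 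Uniqueness and functoriality of companions under subquotients, furnished by Theorem~\ref{thm:comp}, then transport the subquotient relation from the $\ell$-adic to the $p$-adic side, yielding the statement.

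The main obstacle is precisely this transport across companions: the geometric-origin input is intrinsically $\ell$-adic, while the desired conclusion concerns $F$-isocrystals. Making this rigorous requires not only the existence of both $\ell$-adic and $p$-adic companions but also their functoriality with respect to subobjects and quotients, and their compatibility with the formation of $R^i f_*$ for smooth projective $f$; this is exactly the role played by the complete companions correspondence of Section~\ref{sec:comp}. A secondary subtlety is that the geometric-origin statement on the $\ell$-adic side may only describe $\mathcal L$ on an unramified open subset, which is why the conclusion is stated on a dense open $C^0_s\hookrightarrow C_s$ rather than on all of $C_s$.
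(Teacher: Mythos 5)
Your outline shares the paper's starting point (Lefschetz irreducibility of the restriction to $C$, then a Langlands-type realization inside a Gau{\ss}--Manin family over a dense open of the curve), but the detour through the $\ell$-adic side creates a genuine gap at the final step. You want to transport the relation ``$\mathcal L$ is a subquotient of $R^if_*\bar\Q_\ell$'' across the companion correspondence to conclude that $M_s=(E_{K_s},\nabla_{K_s})|_{C_{K_s}}$ is a subquotient of $R^if_{s*}(\sO_{Y_s}/\bar\Q_p)$, and you cite ``functoriality of companions under subquotients, furnished by Theorem~\ref{thm:comp}'' as the justification. Theorem~\ref{thm:comp} supplies no such thing: it is a bijection between \emph{finite sets of isomorphism classes of cohomologically rigid objects}, defined by matching Frobenius characteristic polynomials at closed points, and it says nothing about the irreducible constituents of an arbitrary Gau{\ss}--Manin sheaf. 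To close your argument you would need (i) crystalline companions for \emph{all} irreducible constituents of $R^if_*\bar\Q_\ell$ on the curve, (ii) the identification of $R^if_{s*}(\sO_{Y_s}/\bar\Q_p)$ as the crystalline companion of $R^if_*\bar\Q_\ell$ (a Katz--Messing type compatibility for the family $f$), and (iii) a semisimplification-plus-\v{C}ebotarev argument to conclude that the $p$-adic companion of $\mathcal L$ actually occurs as a constituent on the isocrystal side. None of these inputs is established in the paper, and the companion relation by itself -- a pointwise matching of characteristic polynomials -- does not transport subquotient relations.

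The paper's proof never leaves the $p$-adic side, which is precisely how it avoids this issue. It applies Abe's Langlands correspondence for isocrystals on curves directly to $M_s$: by \cite[Thm.~4.2.2]{Abe13} the isocrystal $p_1^*M_s^\vee\otimes p_2^*M_s$ on $C_s\times_s C_s$ is a subquotient of the cohomology of the Shtuka stack, and by \cite[Cor.~2.3.4]{Abe13} that stack admits a proper, surjective, generically finite cover by a smooth projective scheme, yielding a projective $h_s\colon Y_s\to C_s\times_s C_s$ whose Gau{\ss}--Manin isocrystal contains $p_1^*M_s^\vee\otimes p_2^*M_s$. A base-change argument on a slice $\{x_s\}\times C_s^0$, for a suitable dense open $C_s^0$, then produces the projective family $f_s\colon Y_s\to C_s^0$ of the statement. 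So the Drinfeld--Lafforgue mechanism you invoke is the right one, but it must be run for the isocrystal itself (which Abe's work makes possible); as written, your transport step is the missing idea rather than a routine verification.
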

  \begin{proof}
  Let $(E,\nabla)$ be an irreducible rigid connection. Recall from the introduction that its determinant is finite.  Then 
  by the classical Lefschetz theorem \cite{Lef50}, $(E,\nabla)|_C$  is irreducible, and of course has finite determinant. 
  By Theorem \ref{thm:Fstr},   $M_s:=(E_{K(s)}, \nabla_{K(s)})|_{C_{K(s)}}$ is an irreducible isocrystal with Frobenius structure with  finite determinant. By \cite[Theorem 4.2.2]{Abe18}, $p_1^*M^\vee _s\otimes p_2^*M_s$ on $C_s\times_s C_s$ is a subquotient of $R^ig_{s!} ( \sO_{Cht}/\bar \Q_p)$ where $g_s: Cht_s \to C_s\times_s C_s$ is the Shtuka stack. 
  According to \cite[Corollary 2.3.4]{Abe18}, which states that an admissible stack admits a proper surjective   and generically finite cover by a smooth and projective scheme, we can realize $p_1^*M^\vee _s\otimes p_2^*M_s   $ as a subquotient of 
  some $R^nh_{s*} ( \sO_{Cht}/\bar \Q_p)$ for some projective morphism $h_s: Y_s \to C_s\times_sC_s$, with $Y_s$ smooth projective.
Thus there is an inseparable cover $\sC\xrightarrow{\tau} C_s\times_sC_s$ and a factorization $h_s: Y_s \xrightarrow{\varphi}  \sC \to C_s\times_sC_s$ such that $\varphi$ is generically smooth course projective. 
  There is then a dense open $\sU\subset  \sC$ such that $R^n \varphi_* ( \sO_{\varphi^{-1}(\sU)}/\bar \Q_p)$ satisfies base change.  Fixing a point $(x_s,y_s)\in \sU$, one defines a dense open $C_s^0\subset C_s$ such that $\tau^{-1}(x_s\times C_s^0)\subset \sU$. Set $\psi: V=h_s^{-1}(x_s\times C^0_s) \to C^0_s$. Then base change implies that 
 $ M_s $ is a subquotient of  $R^i \psi_*  (\sO_V/\bar \Q_p)$. 
  \end{proof}

\begin{remark}  \label{rmk:flf}
As we recalled in Theorem \ref{thm:lsz} (see \cite[p. 3, Theorem 3.2, Variant 2]{LSZ13} for the original reference), one can associate to an $f$-periodic flat connection an \'etale $W(\mathbb{F}_{p^f})$-local system on $X_{K(s)}$.  Thus starting with $(E_i,\nabla_i)$ rigid over $X/\C$, for $i=1,\ldots, N$, choosing $s$, one  constructs  a $p$-adic representation
 \ga{padicrepn}{ \rho_{i,s}: \pi_1(X_{K(s)})\to GL(r, W(\F_{p^f})).}
These representations will be studied in the following subsection.
\end{remark}

\section{The $p$-adic representation associated to a rigid connection}
The aim of this section is to prove that the representations $\rho_{i,s}$ defined in \eqref{padicrepn} are rigid as representation of the geometric fundamental group (see Theorem \ref{thm:monodromy}). 

\begin{definition}\label{rep_rigid}
\begin{enumerate}
\item[(a)] If $A$ is a  field,  a representation of an abstract  group $G$ in $GL(r, A)$ is said to be {\it  absolutely irreducible} if 
the representation $G\to GL(r, A)\to GL(r, \Omega)$ is irreducible, where $\Omega$ is any algebraically closed field containing $A$.   
\item[(b)] Let $G$ and $A$ be as in (a). A projective representation $G \to PGL(r,A)$ is said to be \emph{absolutely irreducible} if for every embedding of $A$ into an algebraically closed field $\Omega$ the composition $G \to PGL(r,\Omega)$ is irreducible as a projective representation.
\item[(c)] If $G$ is finitely generated, one defines the moduli scheme  $\MB(G, PGL(r))$  of  $PGL(r)$-representations of $G$, which is also a  coarse moduli  scheme of finite type defined over $\Z$. An isolated point  is called {\it rigid}.   
\end{enumerate}
\end{definition}

The definition above refers to abstract representations. Below we explain how to deal with continuous representations.

\begin{definition}\label{def:rig_profinite}
Let $\Gamma$ be a profinite group, and $\rho\colon \Gamma \to GL(r, F)$ a continuous  and absolutely irreducible  representation,  where $F$ is a topological field. Every finite-dimensional $F$-algebra $A$ inherits a canonical topology from $F$. One denotes by 
$$\mathsf{Def}_{\rho}\colon \mathsf{Art}_{F} \to \mathsf{Sets}$$
the functor
sending a  finite-dimensional  commutative local $F$-algebra $A$ to the set of isomorphism classes of continuous representations $\rho'\colon\Gamma \to GL(r, A)$,   such that there exists a finite field extension $F'/F$, an $F$-morphism $A \to F'$, and an isomorphism $(\rho')_{F'} \simeq \rho_{F'}$.
We say that $\rho$ is rigid, if $\mathsf{Def}_{\rho}$ is corepresented by $B \in \mathsf{Art}_F$.
\end{definition}

 \begin{definition}
For a local field $F$, and its ring of integers $\Oo_F$, we say that a continuous 
  representation $\rho\colon \Gamma \to GL(r,\Oo_F)$ is \emph{rigid and absolutely irreducible}, if the associated  residual representation $\Gamma \to GL(r,k_F)$ is rigid and absolutely irreducible.
\end{definition}
We freely use the notation of the preceding sections. Recall that we choose a model $X_S$ of $X$ over which all rigid connections are defined (see Proposition \ref{prop:nice_models2}).  The field of functions $\Q(S)$ is by definition  embedded in $\C$.   We denote by $\overline{ \Q(S)} $ its algebraic closure in $\C$. 

\medskip 
The following theorem is the main result of this section. We denote by $K(s)$ the local field given by the fraction field of the Witt ring $W(k(s))$ where $s$ is a closed point of $S$.

\begin{theorem}\label{thm:monodromy}
For $s \in S$ a closed point and $\Spec W(k(s)) \to S$ we have that  $\rho_{i,s}|_{\pi_1(X_{\overline{K}_s})}$ is absolutely irreducible and rigid.
\end{theorem}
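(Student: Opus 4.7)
The plan is to identify the restriction $\rho_{i,s}|_{\pi_1(X_{\overline{K}_s})}$ with the image of the Higgs bundle $(V_{\overline{K}_s},\theta_{\overline{K}_s})$ under Faltings' $p$-adic Simpson correspondence, and then to transfer absolute irreducibility and rigidity from the Higgs side (where they hold by the complex theory) to the representation side.

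For absolute irreducibility, I would first use that $(E,\nabla)$ is irreducible to conclude, via Simpson's correspondence, that the complex Higgs bundle $(V,\theta)$ underlying the associated variation of Hodge structure is stable; for $p=\mathrm{char}(k(s))$ sufficiently large, stability descends to $(V_s,\theta_s)$ on $X_s$ (by openness of the stable locus in Langer's $S$-model). The Higgs--de Rham flow of Section~\ref{sec:Fstr}, combined with the Lan--Sheng--Zuo functor of Theorem~\ref{thm:lsz}, produces a $W(\F_{p^f})$-local system whose associated representation of $\pi_1(X_{K_s})$ is absolutely irreducible: any nontrivial invariant subspace of the representation would descend, under iterated Cartier transforms along the flow, to a proper $\theta$-invariant subsheaf of $(V_s,\theta_s)$, contradicting stability. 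The same descent argument applied after restriction to the geometric fundamental group shows that no further splitting occurs over $\pi_1(X_{\overline{K}_s})$, since $(V_s,\theta_s)$ remains stable over $\overline{k(s)}$.

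For rigidity, the essential tool is Faltings' $p$-adic Simpson correspondence, which for smooth projective $X_{\overline{K}_s}$ furnishes a fully faithful functor from suitably small Higgs bundles to continuous $\overline{K}_s$-linear representations of $\pi_1(X_{\overline{K}_s})$. I would compare this functor with the Lan--Sheng--Zuo construction by interpreting both through Frobenius-periodic Higgs--de Rham lattices inside the $F$-isocrystal $(E_{K_s},\nabla_{K_s})$: the crystalline nature of the representation produced by Theorem~\ref{thm:lsz} ensures that the two functors agree on our cycle of Higgs bundles $(V_s^{\sigma^j(i)},\theta_s^{\sigma^j(i)})$. Granting the comparison, $\rho_{i,s}|_{\pi_1(X_{\overline{K}_s})}$ is matched with $(V_{\overline{K}_s},\theta_{\overline{K}_s})$, the base change of $(V,\theta)$ along a chosen embedding $\overline{\Q(S)}\hookrightarrow\overline{K}_s$. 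Any infinitesimal deformation of $\rho_{i,s}|_{\pi_1(X_{\overline{K}_s})}$ over an Artinian local $\overline{K}_s$-algebra then corresponds, by full faithfulness, to a deformation of $(V_{\overline{K}_s},\theta_{\overline{K}_s})$. But $(V,\theta)$ defines an isolated point of $M_{Dol}(X/\C,L)(r)$, and since this moduli space is of finite type over a localisation of $\Z$, the property of being isolated is preserved under the base change from $\C$ to $\overline{K}_s$. Hence any such deformation is trivial, proving rigidity in the sense of Definition~\ref{def:rig_profinite}.

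The main obstacle will be making the comparison between the Lan--Sheng--Zuo construction and Faltings' $p$-adic Simpson correspondence precise and functorial with respect to deformations. The most direct route I see is to factor both constructions through the category of $F$-isocrystals equipped with a Griffiths-transversal Hodge filtration (strongly $p$-divisible lattices), and to verify the identification at the level of crystalline periods. A secondary technicality is to identify the continuous deformation space of $\rho_{i,s}|_{\pi_1(X_{\overline{K}_s})}$ with the hypercohomology of the Higgs complex $\mathrm{End}(V_s)\to\mathrm{End}(V_s)\otimes\Omega^1_{X_s}$ (with differential induced by bracket with $\theta_s$), after suitable $p$-adic completions, so that rigidity of the Higgs bundle translates directly into the vanishing of infinitesimal deformations of the representation.
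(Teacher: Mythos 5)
Your overall strategy --- match $\rho_{i,s}|_{\pi_1(X_{\overline{K}_s})}$ with a rigid stable Higgs bundle via Faltings's $p$-adic Simpson correspondence and transfer irreducibility and rigidity across that equivalence --- is exactly the paper's (Lemma \ref{lem:F_p-adic} and Corollary \ref{cor:f1}). The one substantive step you leave open, and flag as your ``main obstacle,'' is the one the paper actually has to supply: the compatibility of the Lan--Sheng--Zuo construction with Faltings's correspondence. The paper's device is to pass to the direct sum $\rho_{\rm big}=\bigoplus_{j=0}^{f_{i,s}-1}\sigma^{j}\rho_{i,s}$ over the whole $\sigma$-orbit; this object is $1$-periodic, hence a Fontaine--Laffaille module (equivalently a Frobenius crystal), and for precisely such objects Faltings computes in \cite[Section~5, Ex.]{Fal05} that his correspondence returns the associated Higgs bundle. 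Decomposing both sides of this identity then matches $\rho_{i,s}|_{\pi_1(X_{\overline{K}_s})}$ with $\sigma^{m_{i,s}}(E_i,\theta_i)$ for some $m_{i,s}$ --- note the Frobenius twist: it need not be the Higgs bundle $(V_{\overline{K}_s},\theta_{\overline{K}_s})$ itself, as you assert, though this is harmless since every member of the orbit is stable and rigid by Proposition \ref{Crigid} and Lemma \ref{lemma:rigidmeansrigid}. Your proposed factorization through strongly $p$-divisible lattices is the right category (it is where the $1$-periodic objects live, Theorem \ref{thm:lsz}), but without the orbit-sum reduction the $f$-periodic case is not yet covered. The remaining ingredients --- spreading out isolatedness of the moduli point from $\C$ to $\overline{K}_s$, and testing rigidity through deformations inside small generalized representations --- are as in the paper; the hypercohomology identification you list as a secondary technicality is not needed.
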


We start with general facts. We emphasize that the following lemma is based on Definition \ref{def:rigid} of rigidity.
\begin{lemma} \label{lem:glpgl}
Let $\Gamma$ be a finitely generated abstract group, and $K$ be an algebraically closed field of characteristic $0$. We denote by $\rho\colon \Gamma \to GL(r, K)$ an irreducible representation. Then $\rho$ is rigid, if and only if the corresponding projective representation $\rho^{\rm proj}\colon \Gamma \to PGL(r, K)$ is rigid.
\end{lemma}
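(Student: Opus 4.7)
The plan is to reduce the equivalence to an isomorphism of formal deformation functors, exploiting the fact that in characteristic zero $\mathfrak{sl}_r$ and $\mathfrak{pgl}_r$ coincide as $\Gamma$-modules via the adjoint action. Consistent with the paper's convention of fixing a torsion determinant $L$ for rigid connections (Definition~\ref{def:rigid}), I interpret rigidity of $\rho$ as $[\rho]$ being isolated in $M_B(\Gamma, GL(r),\delta)$, where $\delta := \det \rho$; this is the natural Betti counterpart to the fixed-determinant moduli appearing on the de Rham side.

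First I would set up two formal deformation functors on Artinian local $K$-algebras: $\mathrm{Def}_\rho$, classifying lifts of $\rho$ with determinant constantly equal to $\delta$, and $\mathrm{Def}_{\rho^{\mathrm{proj}}}$, classifying lifts of the projectivization. Since $\rho$ is absolutely irreducible (automatic over the algebraically closed $K$), Schur's lemma gives $H^0(\Gamma, \mathfrak{sl}_r) = H^0(\Gamma, \mathfrak{pgl}_r) = 0$, so both functors satisfy Schlessinger's criterion and are pro-representable; their hulls coincide with the completed local rings of the two moduli spaces at $[\rho]$ and $[\rho^{\mathrm{proj}}]$. The respective tangent and obstruction spaces are $H^1(\Gamma, \mathfrak{sl}_r), H^2(\Gamma, \mathfrak{sl}_r)$ and $H^1(\Gamma, \mathfrak{pgl}_r), H^2(\Gamma, \mathfrak{pgl}_r)$.

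The key step will be to observe that in characteristic zero the composition $\mathfrak{sl}_r \hookrightarrow \mathfrak{gl}_r \twoheadrightarrow \mathfrak{pgl}_r$ is an isomorphism of $\Gamma$-modules: a trace-zero scalar matrix $\lambda \cdot \mathrm{Id}$ satisfies $r\lambda = 0$, hence $\lambda = 0$ since $r$ is invertible in $K$. Consequently the projectivization morphism $\mathrm{Def}_\rho \to \mathrm{Def}_{\rho^{\mathrm{proj}}}$ induces isomorphisms on both tangent and obstruction spaces, so by standard obstruction theory it is itself an isomorphism of pro-representable functors. This yields an isomorphism of completed local rings $\widehat{\mathcal{O}}_{M_B(\Gamma, GL(r),\delta),\,[\rho]} \cong \widehat{\mathcal{O}}_{M_B(\Gamma, PGL(r)),\,[\rho^{\mathrm{proj}}]}$; since a closed point of a finite-type $K$-scheme is isolated exactly when its completed local ring is Artinian, the claimed equivalence follows.

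The principal obstacle will be to pin down the correct interpretation of rigidity: without the fixed-determinant convention the rank-one case $\Gamma = \Z$ (where $PGL(1)$ is trivial but $\rho$ admits a one-parameter family of character twists) already provides an immediate counterexample. Once this interpretation is settled the argument is entirely formal, and its content lies in the characteristic-zero identity $\mathfrak{sl}_r \cong \mathfrak{pgl}_r$.
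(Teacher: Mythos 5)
Your argument is correct, but it takes a genuinely different route from the paper's. You linearize the whole problem: both deformation functors are pro-representable (Schur's lemma kills $H^0$), the projectivization map induces on tangent and obstruction spaces the maps coming from $\mathfrak{sl}_r \to \mathfrak{pgl}_r$, and in characteristic $0$ this is an isomorphism of $\Gamma$-modules, so the completed local rings agree and in particular are simultaneously Artinian. The paper instead argues directly with one-parameter (DVR) families: in one direction, a linear deformation with constant projectivization differs from $\rho$ by a character $\chi$ with $\chi^r=1$ (here the fixed torsion determinant enters, exactly the point you correctly isolated as essential), and a trace comparison forces $\chi=1$; in the other direction, a projective deformation is lifted to a linear one with prescribed determinant because the obstruction lives in $H^2(\Gamma,\mu_r(R))=H^2(\Gamma,\mu_r(K))$ and vanishes at the closed point. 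Your approach is more uniform (one comparison of functors handles both directions at once) and gives slightly more, namely an isomorphism of formal neighbourhoods rather than mere equivalence of isolatedness; the paper's argument is more elementary and does not pass through the identification of completed local rings of the coarse moduli spaces with deformation rings. On that last point you should be a little careful on the $PGL_r$ side: an irreducible $\rho$ can have $\rho\otimes\chi\simeq\rho$ for a nontrivial character $\chi$, so $\rho^{\mathrm{proj}}$ may have a nontrivial finite stabilizer in $PGL_r$ and the completed local ring of the coarse moduli space is then only the invariant subring of the hull of $\mathsf{Def}_{\rho^{\mathrm{proj}}}$ under this finite group; since in characteristic $0$ the hull is finite over its invariants, Artinian-ness is still detected correctly and your conclusion stands, but the clause ``their hulls coincide with the completed local rings'' needs this small amendment.
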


\begin{proof}
We show first that rigidity of $\rho^{\rm proj}$ implies rigidity of $\rho$. We assume by contradiction that $\rho$ is not rigid. Then there exists a discrete valuation ring $R$ over $K$,  with residue field $K$, thus $K\xrightarrow{\iota} R\xrightarrow{q} K$, and 
a representation $\tilde{\rho}\colon \Gamma \to GL(r, R)$ such that the diagram
\[
\xymatrix{
\Gamma \ar[r] \ar[rd] & GL(r, R) \ar[d]^q \\
& GL(r, K)
}
\]
commutes. This amounts to a non-trivial deformation. Since $\rho^{\rm proj}$ is rigid, the associated projective representation $\tilde{\rho}^{\rm proj}$ has to be constant. In particular we conclude that $\tilde{\rho}^{\rm proj} \otimes \overline{ {\rm Frac}( R)}$ is equivalent to $\rho^{\rm proj}$ after base change. This implies that there exists a character $$\chi\colon \Gamma \to  (\overline{{\rm Frac}( R)})^\times,$$ such that 
$$ \iota\circ \rho \otimes  \overline{{\rm Frac}( R)} \simeq (\tilde{\rho} \otimes \overline{{\rm Frac} ( R)}) \otimes \chi.$$
Taking determinants, we see that $\chi^{r}$ is trivial, which implies that $\chi$ is already defined over $K$. By irreducibility of $\rho$ over $R$, there is an isomorphism 
$$ \iota\circ \rho \simeq (\tilde{\rho} \otimes {\rm Frac} (R)) \otimes \chi$$
defined over ${\rm Frac} (R)$.
We conclude the proof by observing that $\iota \circ \rho \simeq \tilde{\rho} \otimes \chi$ implies 
${\rm Tr}(\rho(g)) =\chi(g) {\rm Tr} (\tilde \rho (g) )$ for all $g\in \Gamma$.
Thus we have $\chi(g)=1$. 

\medskip

Vice versa, let us assume that $\rho$ is rigid. Let $\chi = \det \rho\colon \Gamma \to K^{\times}$ be the determinant of $\rho$. As above we consider a non-trivial deformation $\widetilde{\rho}^{\rm proj}\colon \Gamma \to PGL_n(R)$ of $\rho^{\rm proj}$. The obstruction of lifting $\widetilde{\rho}^{\rm proj}$ to a homomorphism $\widetilde{\rho}\colon \Gamma \to GL_n(R)$ with $\det \widetilde{\rho} \simeq \chi$ lies in $H^2(\Gamma,\mu_n(R))$. Since $\mu_n(R) = \mu_n(K)$, and the obstruction vanishes over the residue field (indeed, $\rho^{\rm proj}$ is the projectivization of $\rho$), we see that the obstruction vanishes also over $R$.
This shows the existence of an $R$-deformation of $\rho$, which is non-trivial, since the associated projective representation is non-trivial.
\end{proof}

Recall that for a profinite ring $A$, an abstract representation $\rho:  \pi_1^{\rm top}(X)\to GL(r, A)$ factors through a continuous profinite representation $ \hat \rho\colon \pi_1(X)\to GL(r, A)$, similarly for a $PGL(r, A)$ representation. The next lemma gives a criterion for rigidity.

\begin{lemma}\label{lemma:cont_rigid} 
Let $\rho\colon  \pi_1^{\rm top}(X)\to GL(r, \mathbb{F}_q)$ be an absolutely irreducible representation. Let   $[\rho]\in \MB(X/\C, {\rm det}(\rho),r)(\mathbb{F}_q)$ be its moduli point. Then   $[\rho]$ lies in $\MBrig(X/\C, {\rm det}(\rho),r)(\mathbb{F}_q)$ if and only if the continuous representation
$\hat \rho$ is rigid.  The analogous assertion holds for projective representations.
\end{lemma}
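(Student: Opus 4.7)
The plan is to unwind both definitions and show that they express the same formal deformation-theoretic condition; the crucial reduction is that $\F_q$-coefficients render continuity automatic.

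First I would check the passage from $\pi_1^{\mathrm{top}}(X)$ to $\pi_1(X)$. For any $A \in \mathsf{Art}_{\F_q}$, the ring $A$ is finite, so $GL(r, A)$ is a finite group, and any group homomorphism from the finitely generated group $\pi_1^{\mathrm{top}}(X)$ to $GL(r, A)$ has finite image, hence factors uniquely and continuously through $\pi_1(X)$. This yields a natural, conjugation- and reduction-compatible bijection between abstract homomorphisms $\pi_1^{\mathrm{top}}(X) \to GL(r, A)$ and continuous homomorphisms $\pi_1(X) \to GL(r, A)$, and consequently an identification of $\mathsf{Def}_{\hat\rho}$ with the deformation functor of $\rho$ viewed as an abstract representation of $\pi_1^{\mathrm{top}}(X)$.

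Next I would identify this deformation functor with the local structure of the moduli space. Since $\rho$ is absolutely irreducible, standard deformation theory shows that the completed local ring $\widehat{\sO}_{M_B(X/\C,\det\rho)(r),[\rho]}$ pro-represents the deformation functor of $\rho$ with fixed determinant. By construction, the point $[\rho]$ lies in $M_B^{\rm rig}(X/\C,\det\rho)(r)$ iff it is an isolated closed point of $M_B(X/\C,\det\rho)(r)$, iff this completed local ring is Artinian, iff the deformation functor is corepresentable by an object of $\mathsf{Art}_{\F_q}$ in the sense of Definition~\ref{def:rig_profinite}. Chaining these equivalences with the identification of the previous paragraph yields the claim. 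The projective statement is proved by the same argument with $GL(r)$ replaced throughout by $PGL(r)$, using the moduli space of absolutely irreducible projective representations.

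The main technical point is the bookkeeping with the determinant: Definition~\ref{def:rig_profinite} does not explicitly fix $\det$, whereas $M_B(X/\C,\det\rho)(r)$ does. Following the pattern of the proof of Lemma~\ref{lem:glpgl}, I expect this to be handled by absolute irreducibility together with absorption of character twists into the isomorphism class of a deformation, so that the fixed-determinant and free-determinant formulations yield the same Artinian-representability condition at $[\rho]$. In the projective case this issue is vacuous, which is consistent with the fact that the moduli of projective representations carries no determinant constraint.
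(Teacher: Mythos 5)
Your argument is correct and takes essentially the same route as the paper: the canonical bijection between abstract representations of $\pi_1^{\rm top}(X)$ and continuous representations of $\pi_1(X)$ into $GL(r,A)$ for Artinian $\F_q$-algebras $A$, followed by identifying $\mathsf{Def}_{\hat\rho}$ with the formal completion of $M_B(X/\C,\det\rho)(r)\otimes_{\Z}\F_p$ at $[\rho]$ and noting that isolatedness is equivalent to this completion being the spectrum of an Artinian algebra. Your additional remark on reconciling the fixed-determinant moduli space with the determinant-free Definition~\ref{def:rig_profinite} addresses a point the paper passes over in silence, and your proposed resolution via absolute irreducibility and absorption of character twists is the right one.
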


\begin{proof}
We deduce this from the fact that for a finite-dimensional $\mathbb{F}_q$-algebra $A$, one has a canonical bijection between continuous morphisms 
$\pi_1(X) \to GL(r,A)$
and morphisms $\pi^{\rm top}_1(X) \to GL(r,A)$. This shows that $\mathsf{Def}_{\hat{\rho}}(\mathbb{F}_q)$ is represented by the formal scheme  which is  the formal completion of  $\MB(X/\C, {\rm det}(\rho),r)  \otimes_{\Z} \F_p $ at the point $[\rho]$. The latter is equivalent to the spectrum of an artinian $\mathbb{F}_q$-algebra if and only if $[\rho]$ is an isolated point. That is, if and only if $\rho$ is rigid. Definition \ref{def:rig_profinite} of rigidity for continuous representations of profinite groups allows us to conclude the proof.
\end{proof}

Let $K_{\rm p-mon}$ be a number field such that the topological monodromy of every rank $r$ irreducible rigid projective representation has values in $PGL(r, K_{\rm p-mon})$, and  $K_{\rm mon}$ be  a number field such that the topological monodromy of every rank $r$ irreducible rigid representation has values in $GL(r, K_{\rm mon})$. Since there are only finitely many irreducible rigid representations, and $\pi_1^{\rm top}(X)$ is finitely presented, there exists $M \in \Oo_{K_{\rm p-mon}}$ such that every such representation is defined over $PGL(r, \Oo_{K_{\rm p-mon}}[M^{-1}])$. We write $\mathbb{O}_{\rm p-mon,M} 
 = \prod_{\nu} \Oo_{K_{\rm p-mon},\nu}$, where $\nu$ ranges over the places of $K_{\rm p-mon}$ such that $\nu(M) = 1$. As a topological group, $\mathbb{O}_{\rm p-mon,M}
$ is profinite.

\begin{proposition}[Simpson, \cite{Sim92}, Theorem 4]\label{prop:simpson}
Let ${\rho}\colon \pi_1^{\rm top}(X) \to PGL(r, \Oo_{K_{\rm p-mon}})$ be an absolutely irreducible rigid $PGL(r)$-representation. 
Then there is a finite Galois extension $L/ \Q(S)$ such that $\rho \otimes \mathbb{O}_{\rm p-mon,M}
$ extends to a projective representation
\[
\xymatrix{
\pi_1^{\rm top}(X) \ar[r] \ar[rd]_\rho  & \pi_1(X_L) \ar@{..>}[d] \\
& PGL(r, \mathbb{O}_{\rm p-mon,M}).
}
\]
\end{proposition}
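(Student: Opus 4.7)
The strategy I will follow is Galois descent, exploiting the finiteness of the set of rigid projective representations. The moduli scheme of $PGL(r)$-representations of $\pi_1^{\rm top}(X)$ is defined over $\Q$ (it is cut out by polynomial relations coming from a finite presentation of $\pi_1^{\rm top}(X)$), so the absolute Galois group $\Gamma := \Gal(\overline{\Q(S)}/\Q(S))$ acts on its $\overline{\Q(S)}$-points via the outer action on $\pi_1(X_{\overline{\Q(S)}}) = \widehat{\pi_1^{\rm top}(X)}$. By hypothesis the rigid locus $M_B^{\rm rig}$ is zero-dimensional, hence a finite $\Gamma$-set on which $\Gamma$ acts through a finite quotient. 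I will choose $L/\Q(S)$ to be a finite Galois extension such that $G_L := \Gal(\overline{\Q(S)}/L)$ stabilizes the isomorphism class $[\rho^{\rm proj}]$.

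Next, using a geometric base point to identify $\pi_1(X_{\overline L}) = \widehat{\pi_1^{\rm top}(X)}$, the profinite completion $\hat\rho^{\rm proj}$ becomes a continuous projective representation of $\pi_1(X_{\overline L})$. From the short exact sequence
\[
1 \to \pi_1(X_{\overline L}) \to \pi_1(X_L) \to G_L \to 1,
\]
$G_L$ acts on $\pi_1(X_{\overline L})$ by outer automorphisms. Because $[\rho^{\rm proj}]$ is $G_L$-invariant, for each $g \in G_L$ there exists an element $c_g \in PGL(r, \mathbb{O}_{\rm p-mon, M})$ conjugating $\hat\rho^{\rm proj}$ to $(\hat\rho^{\rm proj})^g$. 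Absolute irreducibility makes $c_g$ unique (Schur's lemma in the projective setting), and uniqueness automatically forces the cocycle identity $c_{gh} = c_g \cdot {}^g c_h$. Consequently the assignment $(\gamma, g) \mapsto \hat\rho^{\rm proj}(\gamma) \cdot c_g$, via a set-theoretic splitting, extends $\hat\rho^{\rm proj}$ to a well-defined homomorphism on $\pi_1(X_L)$.

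The main obstacle will be integrality and continuity: a priori each $c_g$ lies in $PGL(r, \overline K_{\rm p-mon, \nu})$ at each place $\nu \mid M$, and I must ensure they simultaneously lie in $PGL(r, \mathbb{O}_{\rm p-mon, M})$. Since $G_L$ is finite and each $c_g$ is algebraic over $K_{\rm p-mon}$, one may further enlarge $L$ to absorb the finitely many finite field extensions that arise at all places at once; the integrality of the moduli point $[\rho^{\rm proj}]$ over $\mathcal{O}_{K_{\rm p-mon}}[M^{-1}]$ then forces the $c_g$ into $PGL(r, \mathbb{O}_{\rm p-mon, M})$. Continuity of $g \mapsto c_g$ follows automatically from uniqueness combined with continuity of the $G_L$-action on $\hat\rho^{\rm proj}$, yielding the desired continuous extension to $\pi_1(X_L)$.
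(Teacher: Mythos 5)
Your overall strategy is the same as the paper's (and Simpson's): let $\Gamma=\Gal(\overline{\Q(S)}/\Q(S))$ act on the finite set of rigid points, pass to the stabilizer of $[\rho]$ to define $L$, and then descend $\hat\rho$ to $\pi_1(X_L)$ using uniqueness of intertwiners for an absolutely irreducible projective representation. The descent half of your argument is sound, and in fact more explicit than the paper, which compresses the Schur/cocycle step into ``this yields the factorization''.

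There is, however, a genuine gap at the first step. From the finiteness of $M_B^{\rm rig}$ you conclude that the $\Gamma$-action ``factors through a finite quotient'' and hence that the stabilizer of $[\rho^{\rm proj}]$ cuts out a finite Galois extension $L$. But factoring through a finite quotient only tells you that the stabilizer has finite index in $\Gamma$; to obtain a finite extension $L$ by Galois theory you need the stabilizer to be an \emph{open} subgroup, and for a profinite group such as $\Gamma$ (which is not topologically finitely generated) a finite-index subgroup need not be open. This is exactly where the real content of Simpson's theorem lies, and it is what the paper's proof supplies: one embeds $\mathcal{R}(\pi_1^{\rm top}(X))(\mathbb{O}_{\rm p-mon,M})$ into $PGL(r,\mathbb{O}_{\rm p-mon,M})^{M}$ via a finite generating set, endowing it with the profinite topology, and checks that $\gamma\mapsto \rho^{\gamma}$ is continuous; since $[\rho]$ is isolated, the set of $\gamma$ preserving the isomorphism class is then open, not merely of finite index. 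You need to add this continuity argument (or some substitute for it). A secondary, smaller issue: your claim that integrality of the moduli point ``forces the $c_g$ into $PGL(r,\mathbb{O}_{\rm p-mon,M})$'' is asserted rather than proved; working directly inside the profinite representation space, as the paper does, makes integrality of the intertwiners automatic and avoids having to enlarge $L$ a posteriori.
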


\begin{proof}
Let  $\hat \rho\colon \pi_1(X)
\to PGL(r, \mathbb{O}_{\rm p-mon,M} )$ be the profinite representation associated to $\rho$. We apply Simpson's theorem {\it loc. cit.}, with the slight difference that we use here directly a projective representation in the assumption. 
For the reader's convenience, we sketch Simpson's argument in this context.  We choose finitely many generators $A_1,\dots,A_N$ of $\pi_1^{\rm top}(X)$ and use them to embed 
$$\mathcal{R}(\pi_1^{ \rm top}(X))(  \mathbb{O}_{\rm p-mon,M}) \hookrightarrow PGL(r, \mathbb{O}_{\rm p-mon,M} ))^{N},$$ 
where $\mathcal{R}(\pi_1^{\rm top}(X))(   \mathbb{O}_{\rm p-mon,M}
) $ is the set of $\mathbb{O}_{\rm p-mon,M} $-points of the scheme of representations defined by the image of the $A_i$ satisfying the relations of the topological fundamental group.
Thus $\mathcal{R}(\pi_1^{ \rm top}(X))$ is endowed with the profinite topology. 
To $\gamma \in \Gamma = {\rm Gal}(\overline{\Q(S)}/ \Q(S))$ one assigns the representation
$\rho^\gamma\colon  \pi_1(X) \to PGL(r,  \mathbb{O}_{\rm p-mon,M}), \ c\mapsto \rho(\gamma c \gamma^{-1})$. 
Continuity is checked as in   {\it loc. cit.} As $\rho$ is rigid, there is an open subgroup $U\subset \Gamma$ such that for $\gamma \in U$, the representation $\rho^\gamma$ is isomorphic to 
$\hat \rho$. 
We set $L= \overline{\Q(S)}^U$.  This yields the factorization\[
\xymatrix{
\pi_1(X) \ar[r] \ar[rd]_{\hat \rho} & \pi_1(X_L) \ar@{..>}[d] \\
& PGL(r,  \mathbb{O}_{\rm p-mon,M} ). 
}
\]
\end{proof}

The Lan--Sheng--Zuo correspondence only relates periodic Higgs bundles with crystalline representations. In particular we do not know that the representation thereby assigned to a rigid Higgs bundle is again rigid. This problem is solved in the sequel by relating the Lan--Sheng--Zuo correspondence to Faltings's Simpson correspondence established in \cite[Theorem 5]{Fal05}. Recall that Faltings defines a category of generalized representations (of the geometric \'etale fundamental group) of a $p$-adic scheme, see \cite[Section 2]{Fal05}, {and defines the notions of \emph{small} generalized representations and $\emph{small}$ Higgs bundles (see \emph{loc. cit.}).}

\begin{theorem}[Faltings]\label{thm:Faltings}
Let $K$ be a local field with ring of integers $V$, let $X$ be a proper $V$-scheme with toroidal singularities. There exists an equivalence of categories between small Higgs bundles on $X_{\bar{K}}$ and small generalized $K$-representations of $\pi_1^{\rm{\acute{e}t}}(X_{\bar{K}})$.
\end{theorem}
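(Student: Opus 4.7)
My starting point is the observation that this is not a result to be reproved from scratch but the main theorem of \cite[Thm.~5]{Fal05}, so my actual ``proof'' would simply be a reference to \emph{loc.~cit.}. That said, let me sketch the Leitfaden of Faltings's argument, since it is helpful for motivating the shape of the objects on both sides of the correspondence. The plan is to establish the equivalence first locally on small affine opens carrying toroidal charts, then globalize by a descent argument along an \'etale cover by such opens. Throughout one works in Faltings's framework of almost mathematics, in order to have enough flexibility to ignore $p$-power torsion phenomena.

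First I would fix a small affine open $U = \Spec A \subset X$ equipped with an \'etale morphism to a torus $\Spec V[t_1^{\pm 1},\ldots,t_d^{\pm 1}]$, adapted to the toroidal chart. Over such $U$ one constructs the Kummer tower $A_\infty = \bigcup_n A[t_1^{1/p^n},\ldots,t_d^{1/p^n}]$ together with its $p$-adic completion $\widehat{A}_\infty$, which is almost \'etale over $A$ and carries a canonical continuous action of $\mathbb{Z}_p(1)^d \times \pi_1^{\rm{\acute{e}t}}(U_{\overline{K}})$. This object plays the role of the period ring: a small generalized $K$-representation on $U$ is by definition a finite projective $\widehat{A}_\infty[1/p]$-module equipped with a continuous action of the above group which is close to the trivial one. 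A small Higgs bundle on $U_K$ is a locally free $\Oo_{U_K}$-module $E$ together with a Higgs field $\theta\colon E \to E\otimes \Omega^1_{U_K/K}$ whose matrix entries (in any trivialization) are small in the $p$-adic sense.

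The key local step is to pass between the two sides by interpreting the infinitesimal action of $\mathbb{Z}_p(1)^d$ as a Higgs field. Given a generalized representation, the $d$ commuting topological generators of $\mathbb{Z}_p(1)^d$ differentiate to $d$ commuting $\Oo$-linear endomorphisms of the associated coherent sheaf, which assemble into the desired $\theta$; conversely, given $(E,\theta)$, one ``exponentiates'' along a chosen period element (essentially $\log t_i$ in the appropriate period ring) to produce the compatible $\mathbb{Z}_p(1)^d$-action. Smallness ensures the exponential converges and both constructions stay within the relevant categories, and one checks that the two functors are quasi-inverse to each other by a direct calculation on a trivialized Higgs bundle.

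To globalize, I would cover $X$ by small affine opens of the type above and check that the local equivalences glue. This is where Faltings's almost \'etale descent is essential: the cocycle comparing the constructions on two overlapping charts lives in an almost cohomology group that vanishes after inverting $p$, so the locally defined generalized representations patch uniquely. The main obstacle in writing this up carefully is precisely the control of smallness under the gluing: one has to verify that on overlaps the change-of-chart identifications preserve the smallness condition, which requires uniform $p$-adic estimates on the period rings $\widehat{A}_\infty$ and their comparison isomorphisms. Once this is established, the functor and its quasi-inverse are defined globally, and the equivalence of categories follows.
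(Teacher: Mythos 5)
Your proposal takes exactly the same approach as the paper: Theorem \ref{thm:Faltings} is quoted there without proof as \cite[Theorem 5]{Fal05}, with the surrounding text only recalling Faltings's notion of generalized representations, so a citation to \emph{loc.~cit.} is all that is required. Your additional sketch of Faltings's local-to-global argument (Kummer towers over small toroidal affines, converting the infinitesimal $\mathbb{Z}_p(1)^d$-action into a Higgs field and exponentiating back under the smallness hypothesis, then gluing by almost \'etale descent) is a fair summary of that reference but goes beyond what the paper itself supplies.
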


\medskip

We fix a closed point $s \in S$ and consider a morphism $\Spec W(k(s)) \to S$. The fraction field of $W(k(s))$ will be denoted by $K(s)$.
As in Remark \ref{rmk:flf} we list the representations $\rho_{i,s}\colon \pi_1(X_{\mathbb{Q}_p}) \to GL(r, W(\mathbb{F}_{p^{f_{i,s}}}))$ corresponding to rank $r$ irreducible rigid Higgs bundles $(E_i,\theta_i)$ on $X_{W(k(s))}$.   We let $\sigma^{m}(E_i,\theta_i)$  denote the periodic Higgs bundle corresponding to $\sigma^m(\rho_{i,s})$ by the Lan--Sheng--Zuo correspondence.
That is, $\sigma$ is the shift operator defined in \eqref{sigma}  on Higgs bundles, which corresponds to the Frobenius action on the coefficients $W(\F_{p^{f_{i,s}}})$ for $\rho_{i,s}$. 

We recall that $\sigma$ denotes Lan--Sheng--Zuo's shift functor for periodic Higgs bundles. On the level of crystalline representations it corresponds to the Frobenius-twist of a representation. Recall that we denote by $f_{i,s}$ the smallest positive integer such that $\sigma^{f_{i,s}}(E_i,\nabla_i) \simeq (E_i,\nabla_i)$.

\begin{lemma}\label{lem:F_p-adic}
There exists an $m_{i,s}$ such that under Faltings's $p$-adic Simpson correspondence the representation of the geometric fundamental group $\rho_{i,s}|_{\pi_1(X_{\overline{K}_s})}$ is isomorphic to $\sigma^{m_{i,s}}(E_i,\theta_i)$ as representations defined over 
${\rm Frac}(W(\mathbb{F}_{p^{f_{i,s}}}))$.
\end{lemma}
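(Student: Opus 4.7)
The plan is to compare Faltings's $p$-adic Simpson correspondence with the Lan--Sheng--Zuo correspondence, both of which associate Higgs-theoretic data to $p$-adic representations, and to show that these two assignments agree on $(E_i,\theta_i)$ up to a Frobenius twist.

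First, I would observe that the crystalline representation $\rho_{i,s}$ has coefficients in $W(\mathbb{F}_{p^{f_{i,s}}})$; after base change to the fraction field, its restriction to $\pi_1(X_{\overline{K}_s})$ is a small generalized representation in Faltings's sense. By Theorem~\ref{thm:Faltings}, this restricted representation corresponds to a stable Higgs bundle $(E'_i,\theta'_i)$ on $X_{K_s}$.

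Second, I would prove that $(E'_i,\theta'_i) \simeq \sigma^{m_{i,s}}(E_i,\theta_i)$ for some integer $m_{i,s}$ by invoking a compatibility between the two correspondences. The Lan--Sheng--Zuo representation is built from a strongly $p$-divisible Fontaine--Laffaille lattice extracted from the $f_{i,s}$-periodic Higgs-de Rham flow starting at $(E_i,\theta_i)$. The Higgs bundle assigned by Faltings to the generalized representation underlying such a Fontaine--Laffaille module should be the graded Higgs bundle of the underlying filtered flat connection, which by construction of the flow lies in the orbit $\{\sigma^k(E_i,\theta_i)\}_{k=0}^{f_{i,s}-1}$. The choice of embedding $W(\mathbb{F}_{p^{f_{i,s}}}) \hookrightarrow \overline{K}_s$ then selects which shift $m_{i,s}$ occurs.

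The hard part will be establishing this compatibility precisely. One must verify that Faltings's local description of the $p$-adic Simpson correspondence matches, up to a Frobenius twist, the Higgs bundle obtained by iterating the inverse Cartier transform $C_1^{-1}$ in the Higgs-de Rham flow. Both constructions locally unfold via Hodge--Tate-type period maps on the pro-\'etale site, but a careful bookkeeping of Frobenius twists across the $f_{i,s}$ steps of the flow is needed to pin down the precise index $m_{i,s}$.
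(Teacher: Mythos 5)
Your skeleton is the right one — restrict $\rho_{i,s}$ to the geometric fundamental group, apply Theorem~\ref{thm:Faltings}, and identify the resulting Higgs bundle with a shift $\sigma^{m_{i,s}}(E_i,\theta_i)$ via a compatibility between Faltings's correspondence and the Lan--Sheng--Zuo construction. But the step you yourself flag as ``the hard part'' is exactly the point, and your proposed way of closing it (a direct local comparison of Faltings's period maps with the iterated inverse Cartier transform across all $f_{i,s}$ steps of the flow, with bookkeeping of Frobenius twists) is not carried out and would be a substantial, genuinely difficult computation. Note also a subtlety you elide: the $f_{i,s}$-periodic object itself is \emph{not} a Fontaine--Laffaille module; only its orbit sum is, so the compatibility statement you want to ``invoke'' does not directly apply to $(E_i,\theta_i)$ when $f_{i,s}>1$.

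The paper sidesteps the hard computation entirely by a direct-sum trick. One forms $\rho_{\rm big}=\bigoplus_{j=0}^{f_{i,s}-1}\sigma^j\rho_{i,s}$, which corresponds to the $1$-periodic Higgs bundle $(E_{\rm big},\theta_{\rm big})=\bigoplus_j \sigma^j(E_i,\theta_i)$, i.e.\ to an honest Frobenius crystal (torsion-free Fontaine--Laffaille module). For such objects Faltings himself verified (the example in \cite[Section~5]{Fal05}) that his $p$-adic Simpson correspondence returns the associated graded Higgs bundle — this is the only compatibility input needed, and it is already in the literature rather than something to be re-proved locally. One then decomposes $\rho_{\rm big}|_{\pi_1(X_{\overline{K}_s})}\otimes\mathbb{Q}$ into its irreducible summands on both sides of the correspondence: each $\sigma^j\rho_{i,s}$ must match a unique $\rho^F_m$ (the representation Faltings attaches to $\sigma^m(E_i,\theta_i)$), and taking the summand $j=0$ produces the index $m_{i,s}$. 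You should restructure your argument around this reduction to the $1$-periodic case; as written, your proof has a gap precisely where the lemma's content lies.
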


\begin{proof} We fix a $\rho_{i,s}$ which we for short denote by $\rho$. 
We define the $\mathbb{Z}_p$-representation
$$\rho_{\rm big}=\bigoplus_{i=0}^{f_{i,s}-1}\sigma^i\rho $$
of $\pi_1(C_{\Q_p})$. 
 It corresponds to the Higgs bundle $(E_{\rm big},\theta_{\rm big})$ of period $1$ by means of Lan--Sheng--Zuo's correspondence.

We compare this correspondence with Faltings's one. First we remark that Faltings's correspondence relates small generalized representations of $\pi_1(X_{\overline{K}_s})$ and small Higgs bundles on $X_{\overline{K}_s}$ (see Theorem \ref{thm:Faltings}). The class of representations is preserved by deformations inside generalized representations, which allows us to test rigidity. Furthermore, every Higgs bundle with nilpotent Higgs field is small (smallness is defined via the characteristic polynomial in the $\mathbb{Q}_p$-theory), and every $\mathbb{Z}_p$-representation induces a small $\mathbb{Q}_p$-representation.

By definition, $\rho_{\rm big}$ corresponds to a Frobenius crystal, and therefore, by \cite[Section 5, Ex.]{Fal05},
Faltings's correspondence associates to $\rho_{\rm big} \otimes \mathbb{Q}$ the Higgs bundle $(E_{\rm big},\theta_{\rm big})$. This example  is thus compatible with the Lan--Sheng--Zuo correspondence.

Furthermore, we have an isomorphism 
$$\rho_{\rm big}|_{\overline{\mathbb{Q}}_p} \otimes_{\Z} \mathbb{Q} = \left(\bigoplus_{i=0}^{f_{i,s}-1}\sigma^i\rho|_{\overline{\mathbb{Q}}_p} \right) \otimes_{\Z} \mathbb{Q}   \simeq \bigoplus_{m=0}^{f_{i,s}-1} \rho^F_m.$$
This implies that each factor on the left hand side is isomorphic to a unique factor on the right hand side. In particular, we obtain the requested isomorphism
$\rho_{i,s} \otimes \mathbb{Q}_q \simeq \rho^F_{m_{i,s}}.$  
\end{proof}
{
\begin{corollary} \label{cor:f1}
The representation $\rho_{i,s}^{\rm geom} = \rho_{i,s}|_{\pi_1(X_{\bar{\mathbb{Q}}_p})}$ is absolutely irreducible and rigid.
\end{corollary}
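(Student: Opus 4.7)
The strategy is to transport rigidity and absolute irreducibility from the Higgs side to the representation side via Faltings's $p$-adic Simpson correspondence (Theorem~\ref{thm:Faltings}), with Lemma~\ref{lem:F_p-adic} serving as the bridge. First, the Higgs bundle $(E_i,\theta_i)$ on $X_{W(k(s))}$ is stable and rigid by our construction of $S$-models. After shrinking $S$, the rigid locus $M_{Dol}^{\rm rig}(X_S/S, L_S^a)(r)$ is $0$-dimensional and \'etale over $S$, so base-changing to $X_{\bar{K}_s}$ preserves stability and rigidity on each geometric fibre. Moreover, the shift operator $\sigma$ of Lemma~\ref{lem:bij} permutes the finite set of rigid stable Higgs bundles, so $\sigma^{m_{i,s}}(E_i,\theta_i)_{\bar K_s}$ is again stable and rigid as a Higgs bundle over $X_{\bar K_s}$.

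Next, by Lemma~\ref{lem:F_p-adic}, after base change of coefficients to ${\rm Frac}(W(\mathbb{F}_{p^{f_{i,s}}}))$ the representation $\rho_{i,s}^{\rm geom}$ corresponds under Faltings's equivalence to the small Higgs bundle $\sigma^{m_{i,s}}(E_i,\theta_i)$. Stable Higgs bundles map to absolutely irreducible (generalized) representations under Faltings's correspondence, which gives the absolute irreducibility assertion. For rigidity, any deformation of $\rho_{i,s}^{\rm geom}$ over an augmented Artinian $W(\mathbb{F}_{p^{f_{i,s}}})$-algebra $A$ in the sense of Definition~\ref{def:rig_profinite} defines, in particular, a small generalized representation deforming the original one; by Faltings's equivalence this corresponds to an $A$-deformation of the Higgs bundle $\sigma^{m_{i,s}}(E_i,\theta_i)$. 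Rigidity of the latter forces this deformation to be trivial, hence so is the deformation of $\rho_{i,s}^{\rm geom}$. Thus $\mathsf{Def}_{\rho_{i,s}^{\rm geom}}$ is corepresented by the base field, which is precisely rigidity.

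The main obstacle is verifying the deformation-theoretic compatibility: one must ensure that a continuous Artinian deformation of $\rho_{i,s}^{\rm geom}$ genuinely gives rise to a deformation inside Faltings's category of small generalized representations, and that on the Higgs side such infinitesimal deformations are controlled by the usual moduli space $M_{Dol}$ of Higgs bundles (so that $0$-dimensionality of the rigid locus really kills them). This is essentially the content of the parenthetical remark in the proof of Lemma~\ref{lem:F_p-adic} that ``the class of representations is preserved by deformations inside generalized representations, which allows us to test rigidity''; once this compatibility is in place, both statements of the corollary follow immediately.
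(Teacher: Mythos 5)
Your proposal is correct and follows essentially the same route as the paper: the paper's proof likewise deduces rigidity of $\sigma^{m}(E_i,\theta_i)$ from Proposition~\ref{Crigid} (your appeal to Lemma~\ref{lem:bij} amounts to the same thing, since that lemma rests on Proposition~\ref{Crigid}) and then transports rigidity and absolute irreducibility through Lemma~\ref{lem:F_p-adic} and Faltings's correspondence. The deformation-theoretic compatibility you flag as the main obstacle is indeed handled in the paper only by the parenthetical remark in the proof of Lemma~\ref{lem:F_p-adic}, exactly as you identified.
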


\begin{proof} 
It follows from Proposition \ref{Crigid} that $\sigma^m(E_i,\theta_i)$ is again rigid. By virtue of Lemma \ref{lem:F_p-adic} we see that the representation $\rho_{i,s}^{\rm geom} = \rho_{i,s}|_{\pi_1(X_{\bar{\mathbb{Q}}_p})}$ is associated to a stable rigid Higgs bundle under Faltings's correspondence, and hence is rigid and absolutely irreducible.
\end{proof}
}
{
\begin{proof}[Proof of Theorem \ref{thm:monodromy}]
In Corollary \ref{cor:f1} we have shown that the representation $\rho_{i,s}^{\rm geom} = \rho_{i,s}|_{\pi_1(X_{\bar{\mathbb{Q}}_p})}$ is absolutely irreducible and rigid. This concludes the proof of Theorem \ref{thm:monodromy}.
\end{proof}
}
 
In the following we denote by $q$ a power of a prime $p$. We use the shorthand $\mathbb{Z}_q$ for $W(\mathbb{F}_q)$.

\begin{lemma} \label{lem:good}
There exist infinitely many prime numbers $p$, such that 
\begin{itemize}
\item[(a)] every rigid and absolutely irreducible $\mathbb{Z}_q$-representation of $\pi_1^{\rm top}(X)$ of rank $r$ and determinant $L$ is defined over $\mathbb{Z}_p$, and similarly for rigid absolutely irreducible projective representations (in particular every place $\nu$ over $p$ in $K_{\rm mon}$ and $K_{\rm p-mon}$ splits completely);
\item[(b)] every rigid and absolutely irreducible representation $\rho\colon \pi_1^{\rm top}(X) \to GL(r,\mathbb{F}_p)$ is obtained as reduction modulo $p$ of a representation $\pi_1^{\rm top}(X) \to GL(r,\Oo_{K_{\rm mon}}[M^{-1}])$, and similarly for projective representations;
\item[(c)] there exists a closed point $s \in S$ with $k(s) = \mathbb{F}_p$, which is the specialization of a morphism $\Q(S) \to \mathbb{Q}_p$;
\item[(d)] for every $s$ as in (b), a rigid and absolutely irreducible projective representation of $\pi_1(X_{\overline{\mathbb{Q}}_p}) \to PGL_n(\mathbb{Z}_p)$ descends to $\pi_1(X_{\mathbb{Q}_p})$.
\end{itemize}
We call such a closed point $s$ \emph{good}, if in addition the order of $L$ is prime to $p$.
\end{lemma}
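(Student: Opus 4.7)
The plan is to ensure (a)--(d) simultaneously for all primes $p$ in a positive-density set, via Chebotarev density applied to a single finite Galois extension of $\Q$ built from the arithmetic data of the finitely many rigid representations.

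First I would gather the finite set of irreducible rigid representations of $\pi_1^{\rm top}(X)$ with torsion determinants $L^a$ ($0 \le a \le d-1$), and apply Proposition \ref{prop:simpson} to each of their projectivisations, obtaining a single finite Galois extension $L/\Q(S)$ over which all of them (after tensoring with $\mathbb{O}_{\rm p-mon, M}$) descend to $\pi_1(X_L)$. Let $\tilde L/\Q$ be the Galois closure of $L\cdot K_{\rm mon}\cdot K_{\rm p-mon}$ inside $\overline{\Q}$. After shrinking $S$ and inverting finitely many primes one may assume that $S$ is smooth over $\Z$ and that $M_B^{\rm rig}(X_S/S, L_S^a)(r)$ is finite \'etale over $\Spec \Z[1/N_0]$ for each $a$, where $N_0$ is a positive integer divisible by $M$, by the discriminant of $\tilde L/\Q$, and by the additional primes needed for this \'etaleness. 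This is possible because the rigid locus is $0$-dimensional and a union of closed components of the finite type $\Z$-scheme $M_B(X_\Z/\Z, L_\Z^a)(r)$, hence quasi-finite over $\Z$ after removing finitely many primes, and then finite \'etale after removing the primes of non-reducedness and of non-separation from the remaining components.

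By the Chebotarev density theorem, infinitely many primes $p\nmid N_0$ split completely in $\tilde L$; fix one such $p$. Condition (a) is immediate: complete splitting in $K_{\rm mon}$ and $K_{\rm p-mon}$ identifies every completion above $p$ with $\Q_p$ and every local ring of integers with $\Z_p$. For (b), the \'etaleness of the rigid locus over $\Spec \Z[1/N_0]$ gives a bijection between its $\F_p$- and $\Z_p$-points, and composing with Lemma \ref{lemma:cont_rigid} (and its projective variant) turns this into the claimed lifting of rigid absolutely irreducible mod-$p$ representations to representations valued in $\mathcal{O}_{K_{\rm mon}}[M^{-1}]$. For (c), smoothness of $S/\Z$ of some relative dimension $e$ reduces matters to producing an $\F_p$-point of $S_{\F_p}$: if $e=0$ the claim is trivial, and if $e>0$ the Lang--Weil estimates guarantee $\F_p$-points of the smooth $\F_p$-variety $S_{\F_p}$ for $p$ sufficiently large, while Hensel's lemma lifts any such point to a $\Spec \Z_p$-point of $S$ specialising to it.

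The main obstacle is (d). Let $\bar\rho\colon \pi_1(X_{\overline{\Q}_p}) \to PGL_n(\Z_p)$ be rigid and absolutely irreducible. Using the $\Z_p$-point of $S$ from (c) to fix an isomorphism $\pi_1(X_{\overline{\Q}_p}) \cong \pi_1(X_{\overline{\Q(S)}})$, the projective analogue of Lemma \ref{lemma:cont_rigid} presents $\bar\rho$ as the profinite completion of a rigid absolutely irreducible projective representation of $\pi_1^{\rm top}(X)$ valued, by (a), in $PGL_n(\mathcal{O}_{K_{\rm p-mon}}[M^{-1}]) \subset PGL_n(\mathbb{O}_{\rm p-mon, M})$ with $\Z_p$ as the factor singled out by our chosen place. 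By the construction of $L$ via Proposition \ref{prop:simpson}, this representation extends continuously to $\pi_1(X_L) \to PGL_n(\mathbb{O}_{\rm p-mon, M})$. Since $p$ splits completely in $L$, the decomposition group at the place of $L$ cut out by the chosen $\Z_p$-point is trivial, so the canonical morphism $\pi_1(X_{\Q_p}) \to \pi_1(X_{\Q(S)})$ factors through $\pi_1(X_L)$; precomposing the extension with this factorisation and projecting onto the distinguished factor $\Z_p$ of $\mathbb{O}_{\rm p-mon, M}$ produces the required descent $\pi_1(X_{\Q_p}) \to PGL_n(\Z_p)$ of $\bar\rho$. The hard point is precisely this global-to-local descent, resolved a priori by building the splitting of $p$ in $L$ into the definition of $\tilde L$.
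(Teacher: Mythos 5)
Your handling of (a) and (b) is fine in spirit, but the core of your argument --- the reduction of (d) to Chebotarev applied to a ``Galois closure $\tilde L/\Q$ of $L\cdot K_{\rm mon}\cdot K_{\rm p-mon}$ inside $\overline{\Q}$'' --- has a genuine gap. The field $L$ produced by Proposition~\ref{prop:simpson} is a finite Galois extension of $\Q(S)$, and $\Q(S)$ is the function field of a scheme of finite type over $\Z$: for a general complex projective $X$ it has positive transcendence degree over $\Q$, so $L$ does not sit inside $\overline{\Q}$, has no Galois closure over $\Q$, and cannot be fed into the Chebotarev density theorem. Consequently the phrase ``$p$ splits completely in $L$'' has no meaning as you use it, and the claimed factorisation of $\pi_1(X_{\Q_p})\to\pi_1(X_{\Q(S)})$ through $\pi_1(X_L)$ is unsupported. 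What is actually needed is that the embedding $\Q(S)\hookrightarrow \Q_p$ chosen in (c) \emph{extends} to an embedding $L\hookrightarrow\Q_p$. The paper obtains this from Cassels's embedding theorem: one forms a single finitely generated subring $R_1=\Z[\alpha_1,\dots,\alpha_m]\subset\C$ containing $R$, $\mathcal{O}_{K_{\rm mon}}$, $\mathcal{O}_{K_{\rm p-mon}}[M^{-1}]$ \emph{and the normalization of $S$ inside all the extensions $L$}, and Cassels gives infinitely many $p$ for which $\Q(\alpha_1,\dots,\alpha_m)$ embeds in $\Q_p$ with the $\alpha_i$ going to units. All four conditions then fall out of this one embedding. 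This input (an embedding theorem for finitely generated fields of characteristic $0$ into $\Q_p$) is not derivable from Chebotarev and is the missing idea in your proposal.

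A secondary, related defect: even granting your choice of $p$, the $\Z_p$-point of $S$ you produce in (c) via Lang--Weil and Hensel is not shown to be dominant, i.e.\ the induced map $R\to\Z_p$ need not be injective, so it need not give a morphism $\Q(S)\to\Q_p$ as the statement requires; and it is in any case chosen independently of your splitting conditions, so the ``place of $L$ cut out by the chosen $\Z_p$-point'' in your argument for (d) is not the one your Chebotarev condition would control even in the number-field case. In the paper's proof both problems disappear because the $\F_p$-point $s$ and the morphism $\Q(S)\to\Q_p$ are by construction the restriction of the single Cassels embedding $R_1\hookrightarrow\Z_p$.
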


\begin{proof}
We write $S = \Spec R$.  We denote by $R_1 = \mathbb{Z}[\alpha_1,\dots,\alpha_m]$ a finitely generated subalgebra of $\mathbb{C}$ containing $R$, $\Oo_{K_{\rm mon}}$ and $\Oo_{K_{\rm p-mon}}[M^{-1}]$, as well as the normalization of $S$ inside all of the (finitely many) field extensions $L/\mathbb{Q}(S)$ constructed in 
Proposition \ref{prop:simpson}. By Cassels's embedding theorem \cite[Theorem I]{C76}, there are infinitely many prime numbers $p$ such that the fraction field $\mathbb{Q}(\alpha_1,\dots,\alpha_m)$ can be embedded in $\mathbb{Q}_p$, and the generators $\alpha_i$ are sent to $p$-adic units. For such a $p$, 
the induced morphism  $\Oo_{K_{\rm mon}} \to \mathbb{Z}_p$ is well-defined and injective.  This shows (a). 

Claim (b) is automatic by choosing very large prime numbers: since the variety of $\pi_1^{\rm top}(X)$-representations is of finite type over $\mathbb{Z}$, the subscheme of rigid representations is finite over a dense open of ${\Spec}\Z$, thus there can only be finitely many primes where isolated points exist that do not have a $K_{\rm mon}$-model. 

Moreover we have a non-trivial morphism $R \to \mathbb{Z}_p$, hence the composition $R \to \mathbb{F}_p$ defines the required $\mathbb{F}_p$-rational point $s$ in (b). 

Claim (c) follows from (b) and Proposition \ref{prop:simpson}. At first we choose an abstract isomorphism of fields $\mathbb{C} \simeq \overline{\mathbb{Q}}_p$, and view a rigid and absolutely irreducible projective representation $\rho$ of $\pi_1(X_{\overline{\mathbb{Q}}_p})$ as one of $\pi_1(X)$.

We know that $\rho$ is obtained from a rigid absolutely irreducible representation defined over $\Oo_{\rm p-mon}[M^{-1}]$. By Proposition \ref{prop:simpson} the associated projective $
\mathbb{O}_{\rm p-mon,M} $-representation descends to $X_{L}$. By tensoring along $
\mathbb{O}_{\rm p-mon,M}   \to \mathbb{Z}_p$ (using that $p$ splits completely in $K_{\rm p-mon}$ by (a)), we see that the projective representation $\rho$ itself descends to $X_L$. However, by construction our $p$ also splits in $L$ and therefore we see that $\rho$ descends to $X_{\mathbb{Q}_p}$.
\end{proof}

\begin{corollary}\label{cor:f_p_one}
{For every good closed point $s \in S$ with ${\rm char}(k(s)) = p$ we have for every rigid stable Higgs bundle $(V,\theta)$ defined over $X_s$ that $\sigma(V,\theta)$ and $(V,\theta)$ are isomorphic as $PGL(r)$-Higgs bundles.} 
\end{corollary}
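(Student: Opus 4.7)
The plan is to transfer the assertion across the Lan--Sheng--Zuo correspondence and then combine the rigidity of the associated geometric projective representation with the good-prime hypothesis. By Theorem~\ref{thm:lsz} and Remark~\ref{rmk:flf}, the periodic Higgs bundle $(E_i,\theta_i)$ corresponds to the crystalline representation $\rho_{i,s}\colon \pi_1(X_{K_s}) \to GL(r, W(\F_{p^{f_{i,s}}}))$, and the shift $\sigma$ on the Higgs side corresponds to the Frobenius-twist of coefficients on the representation side. Since projectivization commutes with both operations, the claim $\sigma(E_i,\theta_i)\simeq (E_i,\theta_i)$ as $PGL(r)$-Higgs bundles is equivalent to asserting that $\rho_{i,s}^{proj}$ factors through $PGL(r,\Z_p) \subset PGL(r, W(\F_{p^{f_{i,s}}}))$; such a factorisation forces the Frobenius-twist to act trivially on the coefficients, whence $f_{i,s}^{proj}=1$.

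Next I would establish the rigidity of the geometric projective representation and descend its coefficients. By Theorem~\ref{thm:monodromy}, $\rho_{i,s}^{\rm geom}= \rho_{i,s}|_{\pi_1(X_{\bar K_s})}$ is absolutely irreducible and rigid; the profinite version of Lemma~\ref{lem:glpgl} then shows that its projectivization $\rho_{i,s}^{\rm geom, proj}$ is absolutely irreducible and rigid as well. Via the identification of $\pi_1(X_{\bar K_s})$ with the profinite completion of $\pi_1^{\rm top}(X)$, together with Lemma~\ref{lemma:cont_rigid}, this continuous projective representation corresponds to a rigid absolutely irreducible projective representation of $\pi_1^{\rm top}(X)$. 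Since $s$ is good, Lemma~\ref{lem:good}(a) asserts that every such representation is defined over $\Z_p$, so $\rho_{i,s}^{\rm geom, proj}$ is conjugate to some $\rho'\colon \pi_1(X_{\bar K_s}) \to PGL(r,\Z_p)$, and Lemma~\ref{lem:good}(d) then extends $\rho'$ to a representation $\tilde\rho\colon \pi_1(X_{K_s}) \to PGL(r,\Z_p)$.

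Finally, both $\rho_{i,s}^{proj}$ and $\tilde\rho$ (the latter composed with the inclusion $PGL(r,\Z_p)\hookrightarrow PGL(r,W(\F_{p^{f_{i,s}}}))$) are extensions of the same absolutely irreducible $\rho_{i,s}^{\rm geom, proj}$ to the arithmetic fundamental group $\pi_1(X_{K_s})$. Since the centralizer of an absolutely irreducible $PGL(r)$-representation is trivial, two such extensions are uniquely isomorphic whenever they exist; hence $\rho_{i,s}^{proj}\simeq \tilde\rho$ factors through $PGL(r,\Z_p)$. Translating back across the Lan--Sheng--Zuo correspondence produces the required isomorphism of projective Higgs bundles, and therefore $f_{i,s}^{proj}=1$. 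The delicate step I expect to be the main obstacle is precisely this uniqueness-of-extension argument: one has to verify carefully, using absolute irreducibility and the triviality of the centre of $PGL(r)$, that the two projective extensions really become isomorphic inside $PGL(r,W(\F_{p^{f_{i,s}}}))$ without any unwanted Galois twist sneaking in, and that the passage from the Betti/topological descent in Lemma~\ref{lem:good}(a) to a descent of the étale continuous projective representation is compatible with the specialisation from $\pi_1^{\rm top}(X)$ to $\pi_1(X_{\bar K_s})$.
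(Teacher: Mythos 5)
Your argument follows the same route as the paper: the printed proof of Corollary~\ref{cor:f_p_one} is exactly the observation that $\rho_{i,s}^{\rm proj}$ is isomorphic to a projective representation defined over $\Z_p$ (via Theorem~\ref{thm:monodromy} together with Lemma~\ref{lem:good}), and that Frobenius-twisting $\Z_p$-coefficients is the identity. Two remarks on the step you single out as delicate. First, the uniqueness-of-extension argument is not actually needed: by Lemma~\ref{lem:F_p-adic} the periodic Higgs bundle is identified, under Faltings's correspondence, with the restriction $\rho_{i,s}|_{\pi_1(X_{\bar K_s})}$, and $\sigma$ with the Frobenius twist of its coefficients, so the isomorphism $\sigma(E,\theta)\simeq(E,\theta)$ of projective Higgs bundles is detected entirely on the geometric fundamental group; it therefore suffices that the geometric projective representation is defined over $\Z_p$, which is Lemma~\ref{lem:good}(a), and parts (d) and the extension step are superfluous for the corollary as stated. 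Second, the justification you give for uniqueness --- that the centralizer in $PGL(r)$ of an absolutely irreducible projective representation is trivial --- is not correct in general: an element $\bar x$ centralizing the image lifts to $x\in GL(r)$ with $x\tilde\rho x^{-1}\simeq\tilde\rho\otimes\chi$ for some character $\chi$ valued in $\mu_r$, and such self-twists can occur, so two extensions to $\pi_1(X_{K_s})$ may a priori differ by a homomorphism of the Galois quotient into this finite group. Since that step can simply be dropped, the proof stands.
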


\begin{proof}
Recall that $\mathbb{Z}_{p^{f_{i,s}}}$ is the ring of definition of $\rho_{i,s}$. We have seen in the proof of Theorem \ref{thm:monodromy} that $\rho_{i,s}^{\rm proj}$ is isomorphic to a projective representation defined over $\mathbb{Z}_p$. This implies the assertion.
\end{proof}

\section{Rigid connections with vanishing $p$-curvature have unitary monodromy}\label{sec:unitary}
The aim of this section is to prove Theorem \ref{thm:unitary}, which asserts that a rigid flat connection $(E,\nabla)$ satisfying the assumptions of the $p$-curvature conjecture has unitary monodromy.  
\medskip

We use that a rigid flat connection has the structure of a complex variation of Hodge structure $(E,F^m,\nabla)$ (by \cite[Lemma 4.5]{Sim92}). A complex variation of Hodge structure has unitary monodromy if and only if its Kodaira--Spencer class $\mathrm{gr}(\nabla) \colon \mathrm{gr}(E) \to \mathrm{gr}(E) \otimes \Omega_X^1$ vanishes.

\begin{theorem}
Let $(E_S,\nabla_S)$ be an $S$-model of a rigid connection such that for all closed points $s \in S$,  the connections $(E_s,\nabla_s)$ have vanishing $p$-curvatures. Then $(E_{\mathbb{C}},\nabla_{\mathbb{C}})$ is a unitary connection.
\end{theorem}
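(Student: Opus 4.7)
The plan is to prove the statement by showing that the Kodaira--Spencer field $\theta = \mathrm{gr}^F \nabla$ of the Higgs bundle $(V,\theta) = (\mathrm{gr}^F E, \mathrm{gr}^F \nabla)$ associated to the complex VHS underlying $(E,\nabla)$ vanishes. By irreducibility of $(E,\nabla)$, the equality $\theta = 0$ forces the VHS to be concentrated in a single Hodge bidegree and hence to have unitary monodromy, as stated. Adopting the notation of Section~\ref{sec:Fstr}, I let $i$ be the index labelling our rigid connection so that $(E^i,\nabla^i) = (E,\nabla)$ and $(V^i,\theta^i)$ is its Simpson partner; the task is to show $\theta^i_\C = 0$.

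First I fix a good closed point $s \in S$ in the sense of Lemma~\ref{lem:good}. Equation~\eqref{dRH} yields $C(E^i_s,\nabla^i_s) = w^\ast(V^{\sigma^{-1}(i)}_s,\theta^{\sigma^{-1}(i)}_s)$, and since the Ogus--Vologodsky Cartier transform of Theorem~\ref{thm:OV} converts vanishing $p$-curvature on the de Rham side into vanishing Higgs field on the Higgs side, the hypothesis $\psi(\nabla^i_s) = 0$ is equivalent to $\theta^{\sigma^{-1}(i)}_s = 0$. The next step is to propagate this vanishing backwards along the $\sigma$-orbit of $i$. For this I invoke Corollary~\ref{cor:f_p_one}: at good points every rigid Higgs bundle is fixed by $\sigma$ projectively, so there is an isomorphism $(V^{\sigma(k)}_s,\theta^{\sigma(k)}_s) \simeq (V^k_s,\theta^k_s) \otimes (M_k,\mu_k)$ for some Higgs line bundle $(M_k,\mu_k)$. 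If $\theta^{\sigma(k)}_s = 0$ this forces $\theta^k_s = -\mu_k \cdot \mathrm{Id}$; taking traces and using that $\det V^k_s = L^{a}_s$ is a torsion line bundle with zero Higgs field, I obtain $r\mu_k = 0$, so $\mu_k = 0$ (since $p \nmid r$) and hence $\theta^k_s = 0$. Iterating this implication backwards along the finite $\sigma$-orbit of $i$, starting from $\theta^{\sigma^{-1}(i)}_s = 0$, yields $\theta^i_s = 0$ at every good closed point $s$.

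To conclude in characteristic zero, I further shrink $S$ so that the pushforward to $S$ of $\sE nd(V^i_S) \otimes \Omega^1_{X_S/S}$ is locally free of finite rank with formation commuting with base change; then $\theta^i_S$ becomes a global section of a locally free $\Oo_S$-module whose vanishing locus is a closed subscheme of $S$. Since by Lemma~\ref{lem:good} good closed points exist in the fibres above infinitely many primes and are Zariski dense in $S$, this vanishing locus equals $S$, giving $\theta^i_\C = 0$ and hence $(E_\C,\nabla_\C)$ has unitary monodromy via the Simpson correspondence.

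The main obstacle is the projective-to-linear transition: the Higgs--de Rham flow only yields a fixed point of $\sigma$ projectively, so something extra is needed to turn the isomorphism $\sigma(V^k_s,\theta^k_s) \simeq (V^k_s,\theta^k_s)$ of $PGL(r)$-Higgs bundles into the honest vanishing $\theta^k_s = 0$. The trace-free constraint coming from the fixed torsion determinant, combined with $p \nmid r$, is precisely what powers this step; the remaining ingredients, namely base-change compatibility for the global section $\theta^i_S$ and Zariski density of good closed points in $S$, are essentially standard.
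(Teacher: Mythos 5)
Your proof is correct and follows essentially the same route as the paper's: fix a good closed point $s$, use Corollary \ref{cor:f_p_one} to identify the graded Higgs bundle of $(E_s,\nabla_s)$ projectively with one whose Higgs field vanishes (forced by the zero $p$-curvature hypothesis via the Cartier transform), deduce that the Kodaira--Spencer class is a scalar $1$-form and hence zero, and then spread out over $S$. The only cosmetic differences are that you propagate backwards around the entire $\sigma$-orbit where a single application of the projective isomorphism already gives $\theta^i_s = \mu\cdot\mathrm{Id}$, and that you kill the scalar via the trace/torsion-determinant constraint (with $p\nmid r$) where the paper invokes nilpotency of $\mathrm{gr}(\nabla_s)$; both variants are valid.
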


\begin{proof}
We choose a good closed point $s \in S$ (see Lemma \ref{lem:good}). By virtue of Corollary \ref{cor:f_p_one} we have $f_{i,s}^{proj} = 1$ for all $i$.
By Corollary \ref{cor:f_p_one}, we know that  $\sigma(V_s,0)$ and $(\mathrm{gr}(E_s),\mathrm{gr}(\nabla_s))$ are equivalent as projective Higgs bundles (this follows from the definition of the Higgs-de Rham flow). This shows that $\mathrm{gr}(\nabla_s) = \omega \cdot{} \id_{\mathrm{gr}(E_s)}$, where $\omega$ is a $1$-form on $X_s$. However, we also know that $\mathrm{gr}(\nabla_s)$ is nilpotent, hence we must have $\mathrm{gr}(\nabla_s) = 0$ for every good $s \in S$.

Since there are infinitely many prime numbers $p$ such that there exists a good closed point $s \in S$ (Lemma \ref{lem:good}), we conclude that the Kodaira--Spencer class vanishes everywhere on $X_S$. Vanishing of the Kodaira--Spencer class implies that $\nabla$ is a unitary connection.
\end{proof}

Under certain circumstances the result above can be used to deduce finiteness of the monodromy. This is the case if the monodromy of the flat connection is known to be \emph{strongly integral} (defined below).

\begin{remark}
According to Simpson's integrality conjecture, a rigid connection is expected to have integral monodromy, that is, the monodromy representation is isomorphic to a representation $\rho\colon \pi^{\rm top}_1(X,x) \to GL_n(\bar{\mathbb{Z}})$. Here, $\bar{\mathbb{Z}}$ denotes the ring of algebraic integers. We emphasize that this is not the same as \emph{strong integrality}, which amounts to the existence of an isomorphism with a representation $\pi^{\rm top}_1(X,x) \to GL_n(\mathbb{Z})$. While it is true that 
$$\text{strong integrality \& unitary} \Rightarrow \text{finite monodromy},$$
it does not hold that ``integrality and unitary" implies finite monodromy. We give a counterexample below.
\end{remark}

\begin{example}
Let $\alpha \in \bar{\mathbb{Z}} \setminus \mu_{\infty}$ be an algebraic integer which is not a root of unity such that $|\alpha| = 1$ (see for instance \cite[Theorem 2]{daileda} for a proof of existence). 

Let $\Sigma$ be an orientable Riemann surface of genus $g \geq 1$, and $x \in \Sigma$. We define a representation $\rho\colon \pi_1^{\rm top}(\Sigma,x) = \langle a_1,\dots,a_{2g} | [a_1,a_2]\cdots{}[a_{2g-1},a_{2g}] \rangle \to GL_1(\bar{\mathbb{Z}})$ as follows:
$$\rho\colon a_1 \mapsto \alpha,\text{  } a_i \mapsto 1 \text{  } \forall i > 1.$$
This representation is integral and unitary by construction. However, it cannot be of finite monodromy, since otherwise $\alpha$ would be a root of unity.
\end{example}

\section{A remark on cohomologically rigid connections and companions} \label{sec:comp}

Recent years saw various breakthroughs on Deligne's companion conjecture (\cite[Conjecture 1.2.10]{Del80}), see \cite{AE16, Dri16} and the brief overview given below. While the $\ell$-adic theory can now be considered to be complete, the existence of \emph{les petits camarades cristallins} is open, except for dimension $1$ (see Abe's \cite{Abe18}). This section serves as an extended remark establishing the existence of these $p$-adic companions for $F$-isocrystals stemming from cohomologically rigid flat connections. This result provides further evidence for Simpson's Conjecture \ref{conj:mot}. In line with the main narrative of this article, this observation follows from a counting argument.

\medskip

We use the standard notations. We choose a closed point $s \in S$.
  Given a rigid connection $(E,\nabla)$ with finite order determinant $L$, we constructed the isocrystal with Frobenius structure on $X_{K_v}$  in Theorem \ref{thm:Fstr}.  For notational convenience, in this section, we denote it by $\sF$.

  \medskip

     For the reader's convenience, we summarise the defining properties of $\ell$-adic companions.
   Let $\mathcal{E}$ be an irreducible isocrystal with Frobenius structure  on a smooth variety $Y$ of finite type over $\mathbb{F}_q$ of characteristic $p>0$. To every closed point $y \in Y$,  one attaches the characteristic polynomial $P_{\sE,y}(t)={\rm det}(1-tFr_y|\sE_y)  \in \bar{\mathbb{Q}}_p[t]$, where $Fr_y$  is the  absolute Frobenius  at $y$ acting on $\mathcal{E}_y:=i_y^*\sE$, and where $i_y\colon y\to Y$ is the closed embedding. 
Similarly for a lisse $\bar \Q_{\ell}$-sheaf $\sV$ on $Y$,  for every closed point $y \in Y$, one attaches  the characteristic polynomial $P_{\sV,y}[t] ={\rm det}(1-tF_y|\sV_{\bar y})   \in \bar{\mathbb{Q}}_\ell[t] ,$  where $F_y$ is the geometric Frobenius  $F_y$ acting on $\sV_{\bar y}$, where $\bar y\to y$  an $\bar \F_p$-point above $y$. 

\begin{definition}[See \cite{AE16}, Definition 1.5, \cite{Dri16}, Section 7.4.]
\begin{enumerate}
   \item Let $\tau\colon \bar \Q_p \xrightarrow{\simeq }\bar\Q_{\ell}$ be an  abstract isomorphism of fields. We say that an irreducible lisse $\bar \Q_\ell$-sheaf $\sV$ is a $\tau$-companion of an irreducible isocrystal $\sE$ with Frobenius structure, or equivalently that an irreducible isocrystal $\sE$ with Frobenius structure is a $\tau^{-1}$-companion of an irreducible lisse $\bar \Q_\ell$-sheaf $\sV$ 
  if for every closed  point $y \in Y$,  one has an equality of characteristic polynomials $$\tau (P_{\sE,y}(t)) = P_{\sV,y}[t] \in \bar \Q_\ell[t].$$ 
  \item Let $\tau \colon \bar \Q_p \xrightarrow{\simeq }\bar\Q_{p}$ be an abstract field isomorphism. We say that an irreducible isocrystal $\sE'$ with Frobenius structure is a $\tau$-companion of    an irreducible isocrystal $\sE$ with Frobenius structure  if   for every closed  point $y \in Y$,  one has an equality of characteristic polynomials $$\tau (P_{\sE,y}(t)) = P_{\sE',y}[t] \in \bar \Q_\ell[t].$$   
  \item Let $\tau \colon \bar \Q_\ell \xrightarrow{\simeq }\bar\Q_{\ell'}$ be an abstract field isomorphism.  We say that an irreducible lisse $\bar \Q_{\ell '}$-sheaf $\sV'$ is a $\tau$-companion of an irreducible  lisse $\bar \Q_{\ell }$-sheaf $\sV$ if for every closed  point $y \in Y$,  one has an equality of characteristic polynomials $$\tau (P_{\sV,y}(t)) = P_{\sV',y}[t] \in \bar \Q_\ell[t].$$ 
   
   \end{enumerate}
  \end{definition}
   
   We shall use in the sequel that  $\tau$-companions exist by 
     \cite[Theorem 4.2]{AE16}  (see  \cite[Theorem 7.4.1]{Dri16} for a summary, and  \cite{Ked17} for later work in progress) for $\tau 
   \colon \bar \Q_p \xrightarrow{\simeq }\bar\Q_{\ell'}$, for irreducible objects with finite determinant.  
   They also exist 
   by Drinfeld's theorem  \cite[Theorem 1.1]{Dri12} for $\tau 
   \colon \bar \Q_\ell \xrightarrow{\simeq }\bar\Q_{\ell'}$ 
   for irreducible objects with finite determinant.  We shall not use Drinfeld's $\ell$-to-$\ell '$ existence theorem. 
     By    \v{C}ebotarev's density theorem and Abe's \v{C}ebotarev's theorem  \cite[Proposition A.3.1]{Abe18},
{companions are unique up to isomorphism},  and companions of two non-isomorphic objects are non-isomorphic, and the companion of an order $d$ rank $1$ object is an order $d$ rank $1$ object.
 The general conjecture is that companions exist for all $\tau$. The remaining cases are for $\tau\colon \bar \Q_p\to \bar \Q_p$  and   $\tau\colon \bar \Q_\ell\to \bar \Q_p$. We will see that for the cohomologically rigid case, existence of $p$-to-$p$  and $\ell$-to-$p$ companions can be shown.

 \medskip 
 
 Recall   that an irreducible  connection $(E,\nabla)$ with determinant $L$ on $X$ over $\C$  is called {\it cohomologically rigid}  if $H^1(X, \Eend^0(E,\nabla))=0$, that is $(E,\nabla)$   is rigid and in addition its moduli point  $[(E,\nabla)]\in \MdR(X/\C, L,r)$ is smooth.  Here $L$ is torsion of order $d$. 
 See \cite[Section 2]{EG17} for a general discussion of the notion even in the non-proper case. In our situation, where $X$ is proper, it is straightforward to see that $H^1(X, \Eend^0(E,\nabla))=0$ is the Zariski tangent space  of $ \MdR(X/\C, L,r)$ at the moduli point.  By base change for de Rham cohomology one has 
 $H^1(X_{ \bar \Q_p}, \Eend^0((E,\nabla)))=0$, and this last  group is equal to crystalline cohomology over $\bar \Q_p$ of the isocrystal $\sF$ (see Corollary \ref{cor:isoc}), thus 
  $$H^1_{\rm crys} (X_{\bar s}, \Eend^0(\sF)) \otimes_{W(\bar \F_p)} \bar \Q_p=0. $$ 

\begin{definition}For $s$ a closed point of $S$ in Theorem \ref{thm:Fstr}  we denote by $\tilde{k}(s) \supset k(s)$ a finite extension such that every rigid stable rank $r$ flat connection $(E,\nabla)$ gives rise to an $F$-isocrystal (see Theorem \ref{thm:Fstr}).
\end{definition}

Let $\sS(s, p, r, d)$ be the finite set of  isomorphism classes of isocrystals $\sF$ of rank $r$ and determinant of order $d$ obtained this way, with $d$ prime to the characteristic $p$ of $s$,  such $H^1_{\rm crys} (X_{\bar s}, \Eend^0(\sF)) \otimes_{W(\bar \F_p)} \bar \Q_p=0. $
For a prime $\ell \neq p$, we denote by  $\sS(s, \ell, r, d)$ the finite set of isomorphism classes of  irreducible $\bar \Q_\ell$-adic sheaves $\sV$ of rank $r$ and order $d$ determinant such that 
$H^1 (X_{\bar s}, \Eend^0(\sV)) =0. $ We denote by $S(r,d)$ the finite set of isomorphism classes of irreducible rank $r$ cohomologically rigid connections with order $d$ determinant on $X$.
 \begin{thm} \label{thm:comp}
\begin{itemize}
\item[0)] Theorem \ref{thm:Fstr} defines a bijection between $\sS(r,d)$ and $\sS(s,p,r.d).$ 
\item[1)]  Let $ \tau\colon \bar \Q_p \to \bar \Q_\ell$ be an isomorphism. 
The companion correspondence  for $\tau$ establishes a bijection between $\sS(s, p, r, d)$
and $\sS(s, \ell, r, d)$, defining for $\tau^{-1}$  the companions of the elements in $\sS(s, \ell, r, d)$.
\item[2)]  Let  $ \sigma\colon \bar \Q_p \to \bar \Q_p$ be an isomorphism. Then $\sigma$-companions of elements in 
$\sS(s, p, r, d)$ exist and the companion correspondence  for $\sigma$ establishes a permutation of $\sS(s, p, r, d)$.
\end{itemize}
\end{thm}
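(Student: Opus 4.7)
The plan is to treat parts (0), (1), (2) in order, with (1) the technical heart.

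\medskip

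For (0): Theorem~\ref{thm:Fstr} assigns to each rigid complex connection $(E,\nabla)$ of rank $r$ with determinant of order $d$ an $F$-isocrystal $\sF$ on $X_{K_s}$. The assignment is injective by the bijection $\sigma$ of Lemma~\ref{lem:bij} combined with the uniqueness clause in \cite[Thm.~1.4]{LSZ13}, and $\sS(s,p,r,d)$ is by construction contained in its image. The cohomological rigidity condition transports in both directions via the chain
\[
H^1_{dR}(X_\C, \sE nd^0(E,\nabla)) \otimes_\C \bar\Q_p \simeq H^1_{dR}(X_{\bar K_s}, \sE nd^0(E_{\bar K_s}, \nabla_{\bar K_s})) \simeq H^1_{crys}(X_{\bar s}, \sE nd^0(\sF)) \otimes_{W(\bar \F_p)} \bar \Q_p,
\]
combining flat base change for algebraic de Rham cohomology with the de Rham--crystalline comparison for the good-reduction model.

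\medskip

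For (1): Given an isomorphism $\tau \colon \bar \Q_p \to \bar \Q_\ell$, the $\tau$-companion of each $\sF \in \sS(s,p,r,d)$ exists as an irreducible lisse $\bar \Q_\ell$-sheaf $\sV$ of rank $r$ by \cite[Thm.~4.2]{AE16}; Abe's \v{C}ebotarev theorem \cite[Prop.~A.3.1]{Abe13} yields uniqueness and preservation of the order of the determinant, and the $\tau^{-1}$-companion supplies the inverse assignment. The substantive step is to show $H^1(X_{\bar s}, \sE nd^0(\sV)) = 0$. The plan is to reduce to the curve case: cut $X_{\bar s}$ by a smooth complete intersection curve $C$ of sections of sufficiently ample divisors so that $\sF|_C$ and $\sV|_C$ are geometrically irreducible. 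Weak Lefschetz --- available both on the $\ell$-adic and on the overconvergent $F$-isocrystal side --- yields injections of $H^1(X_{\bar s}, \sE nd^0(-))$ into $H^1(C, \sE nd^0(-))$ in both settings. On the curve, irreducibility kills $H^0(\sE nd^0)$ and Poincar\'e duality kills $H^2$, so $\dim H^1 = -\chi(C, \sE nd^0)$; the Grothendieck--Ogg--Shafarevich formula on the $\ell$-adic side and the Crew--Kedlaya Euler--Poincar\'e formula on the overconvergent $F$-isocrystal side both express $\chi$ in terms of local $L$-factors that are matched by the companion correspondence. Hence $\dim H^1(C, \sE nd^0)$ agrees on the two sides, and the vanishing transports from $\sF$ to $\sV$.

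\medskip

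For (2), given $\sigma \colon \bar \Q_p \to \bar \Q_p$, pick any $\tau \colon \bar \Q_p \to \bar \Q_\ell$. The composite ``take $\tau$-companion, then take $(\sigma \circ \tau^{-1})$-companion'' --- using \cite[Thm.~4.2]{AE16} in both the $p$-to-$\ell$ and $\ell$-to-$p$ directions --- produces an $F$-isocrystal $\sF'$; a direct check on characteristic polynomials confirms $\sF'$ is a $\sigma$-companion of $\sF$, and uniqueness makes $\sF'$ independent of $\tau$. Applying (1) at both ends shows $\sF' \in \sS(s,p,r,d)$ and that $\sF \mapsto \sF'$ is a bijection of $\sS(s,p,r,d)$ with itself, hence a permutation. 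The genuine obstacle lies in (1): matching the local terms in the crystalline Euler--Poincar\'e formula with those in the Grothendieck--Ogg--Shafarevich formula requires that the local ramification data entering the crystalline formula is indeed determined by the characteristic polynomials of Frobenius at closed points, which is the non-formal ingredient.
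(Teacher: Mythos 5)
Your overall architecture (deduce 2) from 1) by composing two companion maps, and get 0) from the comparison isomorphisms) matches the paper, but the two load-bearing steps of 1) both have genuine gaps.

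First, the vanishing $H^1(X_{\bar s},\sE nd^0(\sV))=0$ cannot be obtained by restricting to a complete intersection curve $C$ and matching Euler characteristics. On the projective curve $C$ the sheaf $\sE nd^0(\sV)|_C$ is lisse everywhere, so Grothendieck--Ogg--Shafarevich (and its crystalline analogue) give $\chi(C,\sE nd^0(\sV)|_C)=(r^2-1)\chi_{\rm top}(C)$ --- a number depending only on the rank, with no local terms to match; so of course the two curve-level $H^1$'s have the same dimension, namely $(r^2-1)(2g-2)$, which is in general nonzero. Weak Lefschetz then only gives injections $H^1(X_{\bar s},\sE nd^0(\sV))\hookrightarrow H^1(C,\sE nd^0(\sV)|_C)$ and likewise for $\sF$ into two \emph{different} ambient spaces of equal dimension; equality of the ambient dimensions says nothing about the dimensions of the two images, so the vanishing does not ``transport.'' The paper avoids the curve entirely: it compares the full $L$-functions $L(X_s,\sE nd^0(\sV_{\bar s}))$ and $L(X_s,\sE nd^0(\sF))$, which coincide because the companion relation matches every local factor, and then uses weight purity (both objects have weight $0$ by Lafforgue/Abe, so $H^i$ of the smooth projective $X_{\bar s}$ is pure of weight $i$) to read off $\dim H^1$ on each side as the number of weight-$1$ inverse roots of the common $L$-function. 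That is the mechanism that transports the vanishing, and it is absent from your argument.

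Second, you use $\ell$-to-$p$ companions (``the $\tau^{-1}$-companion supplies the inverse assignment,'' and in 2) you invoke \cite[Thm.~4.2]{AE16} ``in both directions''). For the higher-dimensional $X_s$ at hand, only the $p$-to-$\ell$ direction is available as a black box; the existence of $\ell$-to-$p$ (hence $p$-to-$p$) companions in this setting is precisely what the theorem is asserting, and the paper obtains it by a counting argument: the injective map $\Phi(\tau)\colon\sS(s,p,r,d)\to\sS(s,\ell,r,d)$ is squeezed by the inequalities $|\sS(s,\ell,r,d)|\le|\sS(r,d)|\le|\sS(s,p,r,d)|$ (the first from Corollary~\ref{cor:cohrig}, realizing every cohomologically rigid $\ell$-adic sheaf as a Betti-rigid point over $\C$; the second from Theorem~\ref{thm:Fstr}), forcing $\Phi(\tau)$ to be a bijection, whose inverse is then by construction the $\tau^{-1}$-companion assignment. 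Without this counting step your 1) is circular and your 2) rests on an unavailable existence theorem. You also leave the geometric irreducibility of $\sV$ and the identification of $\deg\det\sV$ essentially unaddressed, though these are comparatively minor.
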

The proof occupies the rest of this section. We fix a prime $\ell \neq p$    and write 
$$\sigma\colon  \bar \Q_p\xrightarrow{\tau} \bar \Q_\ell \xrightarrow{\tau^{'-1}} \bar \Q_p$$ 
for a choice of $\tau$. 
Thus
 2) follows directly from 1) by applying 1) to $\tau$ and $\tau'$.

We denote by $\sV$ the $\tau$-companion  of $\sF \in \sS(s,p,r,d)$.   It corresponds to an irreducible continuous representation  $ \sV_s\colon \pi_1(X_s)\to GL(r, \bar \Q_\ell)$ with finite determinant, thus precomposing by the (surjective) specialization homomorphism $sp\colon \pi_1(X_{K(s)})\to \pi_1(X_{s})$, it defines an irreducible $\ell$-adic lisse sheaf    $\sV_{K(s)} \colon \pi_1(X_{K(s)}) \to GL(r, \bar \Q_\ell)$  on $X_{K(s)}$, and the underlying geometric representation $\sV_{\bar \Q_p}\colon \pi_1(X_{\bar \Q_p}) \to GL(r, \bar \Q_\ell)$ on $X_{K(s)}$.

\begin{proposition} \label{prop:cohrig} 
\begin{itemize}
\item[1)] 
The representation $\sV_{\bar \Q_p}$ is irreducible. 
\item[2)] We have the vanishing result
$$H^1(X_{\bar \Q_p}, \Eend^0(\sV_{\bar \Q_p}))=0.$$
\item[3)] The order of ${\rm det}(\sV_{\bar \Q_p})$ is $d$. 
\end{itemize}

\end{proposition}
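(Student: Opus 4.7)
The plan is, for each of the three claims, to transfer the corresponding property from $\sF$ to $\sV$ using the $p$-to-$\ell$ companion correspondence on $X_s$, and then to propagate it to the geometric fibre $X_{\bar K_s}$ via Grothendieck's specialization isomorphism on the prime-to-$p$ \'etale fundamental group together with smooth and proper base change for $\ell$-adic cohomology (both valid since $\ell\neq p$).

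For (1), I would reduce absolute irreducibility of $\sV_{\bar\Q_p}$ to that of $\sV|_{\pi_1(X_{\bar s})}$ by means of the prime-to-$p$ specialization isomorphism $\pi_1(X_{\bar K_s})^{(p')}\xrightarrow{\sim}\pi_1(X_{\bar s})^{(p')}$, which in turn amounts to proving $H^0(X_{\bar s},\sE nd^0(\sV))=0$. The compatibility of Frobenius characteristic polynomials on the cohomology groups of a companion pair on the smooth projective variety $X_s$ yields
\[
\dim_{\bar\Q_\ell}H^0(X_{\bar s},\sE nd^0(\sV))=\dim_{\bar\Q_p}H^0_{\rm crys}(X_{\bar s},\sE nd^0(\sF)),
\]
and the right-hand side vanishes because $\sF$ is geometrically simple as an $F$-isocrystal: via the Lan--Sheng--Zuo and Faltings formalism already used in the proof of Theorem~\ref{thm:monodromy}, geometric simplicity of $\sF$ is equivalent to absolute irreducibility of $\rho_{i,s}^{\rm geom}$, which is the content of that theorem.

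Part (2) will follow by an entirely parallel argument: the hypothesis $\sF\in\sS(s,p,r,d)$ provides $H^1_{\rm crys}(X_{\bar s},\sE nd^0(\sF))\otimes\bar\Q_p=0$, the same companion-compatibility on cohomology in degree one yields $H^1(X_{\bar s},\sE nd^0(\sV))=0$, and smooth and proper base change for $\ell$-adic cohomology propagates this vanishing to $H^1(X_{\bar\Q_p},\sE nd^0(\sV_{\bar\Q_p}))=0$. For (3), the rank-one companion statement recalled in the preamble to the section identifies $\det\sV$ with $\tau\circ\det\sF$ as a character of $\pi_1(X_s)$, and in particular gives it order $d$. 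Since $L$ is torsion of order $d$ on $X_\C$ and $p\nmid d$, the corresponding \'etale cover of $X_\C$ specializes to a connected degree-$d$ \'etale cover of $X_{\bar s}$, so $\det\sF|_{\pi_1(X_{\bar s})}$ still has order $d$; applying $\tau$ (which is canonical on $\mu_d$) shows that $\det\sV|_{\pi_1(X_{\bar s})}$ has order $d$, and the prime-to-$p$ specialization then yields the claim for $\det\sV_{\bar\Q_p}$.

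The main obstacle is the cohomological-compatibility step used in (1) and (2): the matching of Frobenius characteristic polynomials on the geometric cohomology groups $H^i(X_{\bar s},-)$ for a $p$-to-$\ell$ companion pair. This lies at the technical core of the companion package used throughout this section; it follows from the matching of $L$-functions combined with purity/weight considerations in the spirit of Deligne's Weil II. The secondary identification of $H^0_{\rm crys}(X_{\bar s},\sE nd^0(\sF))$ with the trace-zero geometric endomorphisms of $\rho_{i,s}^{\rm geom}$ rests on the same combination of Lan--Sheng--Zuo with Faltings's $p$-adic Simpson correspondence that entered the proof of Theorem~\ref{thm:monodromy}, and requires no new input.
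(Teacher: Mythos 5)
Your proposal is correct and, for parts (2) and (3), follows essentially the same route as the paper ($L$-function matching plus weight purity, then the specialization isomorphism; and for (3), tracking the order of a rank-one companion). For part (1), though, you take a genuinely different path: you reduce geometric irreducibility of $\sV$ to the cohomological vanishing $H^0(X_{\bar s}, \sE nd^0(\sV))=0$ and match this with $H^0_{\rm crys}(X_{\bar s}, \sE nd^0(\sF))$ via the same $L$-function/weight argument you use for $H^1$. The paper instead argues directly at the level of sheaves: by Deligne's criterion an irreducible $\ell$-adic sheaf that is not geometrically irreducible splits after a finite base change $s'/s$; but the companion over $s'$ is the restriction $\sF|_{s'}$, which is still irreducible because the construction of Theorem~\ref{thm:Fstr} is compatible with finite base change and because Theorem~\ref{thm:monodromy} gives absolute irreducibility of $\rho_{i,s}$ on the geometric fundamental group. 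The paper's route avoids what your argument leans on implicitly, namely a clean translation between geometric simplicity of the isocrystal $\sF$ on $X_{\bar s}$ (an object over $\bar\F_p$) and absolute irreducibility of $\rho_{i,s}^{\rm geom}$ on $\pi_1(X_{\bar K_s})$ (an object over $\bar\Q_p$); this translation is in the spirit of the Lan--Sheng--Zuo/Faltings formalism but is not an off-the-shelf equivalence, and your one-sentence appeal to it is where your argument would need the most fleshing out. Two minor imprecisions: $\tau$ is not ``canonical on $\mu_d$'' --- an abstract field isomorphism $\bar\Q_p\simeq\bar\Q_\ell$ permutes $d$-th roots of unity --- but it does preserve their order, which is all you need; and in passing from $X_{\bar s}$ to $X_{\bar K_s}$ the paper cites local acyclicity and proper base change (Artin), which is the same input as your ``smooth and proper base change.''
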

\begin{proof}
We prove 1). 
Since $sp$ restricts to the specialization $\pi_1(X_{\bar \Q_p})\to \pi_1(X_{\bar s})$, where $\bar s\to s$ is a geometric point with residue field $\bar \F_p$, and is still surjective, we just have to show that $\sV_s$ is geometrically irreducible. Theorem \ref{thm:monodromy} together with the construction of 
Theorem \ref{thm:Fstr} shows that the isocrystal is compatible with  finite base change $s'\to s$.  Since an irreducible $\ell$-adic sheaf which is not geometrically irreducible splits over a finite base change $s'\to s$ (see e.g. \cite[(1.3.1)]{Del12}), this shows that $\sV_s$ is geometrically irreducible. 

\medskip

We turn to the proof of 2).
Let us consider the $L$- functions  $L(X_s,  \Eend^0(\sV_{\bar s}))$  and  $L(X_s, \Eend^0(\sF))$
for the lisse $\bar{\mathbb{Q}}_{\ell}$-sheaf $\Eend^0(\sV_s)$ and for the isocrystal with Frobenius structure $\Eend^0(\sF)$ (\cite[5.2.3]{Del73} and 
\cite[4.3.2]{Abe18}).  The product formula for these $L$-functions implies that they are equal.  On the other hand, 
as $\sV_{\bar s}$  and $\sF$, thus a fortiori $\Eend^0(\sV_{\bar s})$  and $\Eend^0(\sF)$ have weight $0$ (see  \cite[Proposition VII.7 (i)]{Laf02}, corrected in \cite[Cor. 4.5]{EK12}), the dimension of  
$H^1(X_{\bar s}, \Eend^0(\sV_{\bar s}))$ over $\bar \Q_\ell$ and of $H^1_{\rm crys}(X_{\bar s}, \Eend^0(\sF)) \otimes_{\Q_p} \bar \Q_p$ over $\bar \Q_p$  are computed as the number of weight $1$ eigenvalues counted with multiplicities in the $L$-function (see \cite[Lemma 3.4]{EG17} for a more general purity argument). This shows that both are the same.  On the other hand,  one has 
 $H^1_{\rm crys}(X_{\bar s}, \Eend^0(\sF)) = 0$.
 By \cite[Theorem 8.0]{Kat70}, there is a dense open  $S'\hookrightarrow S$ on which one has  base change for de Rham cohomology. Thus we conclude that 
$$ 0={\rm dim}_{\Q_p}H^1_{dR}(X_{\Q_p},  \Eend^0(E,\nabla))= {\rm dim}_{\C}H^1_{dR}(X, \Eend^0(E,\nabla)).  $$
Therefore, we have
${\rm dim}_{\bar \Q_\ell} H^1(X_{\bar s}, \Eend^0(\sV_{\bar s}))=0$.  

\medskip
It remains to  see that the specialization homomorphism
 $$H^1(X_{\bar s}, \Eend^0(\sV_{\bar s})) \to H^1(X_{\bar K(s)}, \Eend^0(\sV_{\bar K(s)}))$$
  is an isomorphism, which is true by local acyclicity and proper base change \cite[Cor. 1.2]{Art73}.
  
  \medskip
  
We now turn to the proof of 3). By definition, the order of ${\rm det}(\sV_{\bar \Q_p})$  is the same as the order of 
   ${\rm det}(\sV_{\bar s})$. Since the order  $d$ of ${\rm det}(\sF)$ does not change after replacing $s$ by a finite extension,  its companion ${\rm det}(\sV_{s})$ is of order $d$. Thus ${\rm det}(\sV_{\bar s})$ is of order $d$.
\end{proof}

\begin{corollary} \label{cor:cohrig} The composite representation $$\rho\colon \pi_1^{\rm top}(X)\to \pi_1(X)=\pi_1(X_{\bar \Q_p})\xrightarrow{\sV_{\bar \Q_p}}  GL(r, \bar \Q_\ell)$$ defines a cohomologically rigid $\bar \Q_\ell$-point of $\MBrig(X/\C, L,r)$, for some $L$ of order $d$, and any cohomologically rigid $\bar \Q_\ell$-point of $\MBrig(X/\C,L,r)$ with determinant of order $d$, arises in this way. 
\end{corollary}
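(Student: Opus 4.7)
The plan is to prove the two directions separately. For the direct assertion I would transfer the three properties of $\sV_{\bar \Q_p}$ established in Proposition~\ref{prop:cohrig} to the topological representation $\rho$. The canonical map $\pi_1^{\rm top}(X)\to \pi_1(X)=\pi_1(X_{\bar \Q_p})$ is, after the chosen embedding $\C\simeq \bar \Q_p$, the profinite completion and therefore has dense image; this transfers absolute irreducibility and the exact order $d$ of the determinant (Proposition~\ref{prop:cohrig} parts~1 and~3) from $\sV_{\bar \Q_p}$ to $\rho$. For the cohomological rigidity of $\rho$, namely $H^1(X(\C),\sE nd^0 \rho)=0$, I would invoke Artin's comparison isomorphism between Betti cohomology on $X(\C)$ and \'etale cohomology on $X_{\bar \Q_p}$ for lisse $\bar \Q_\ell$-sheaves, together with Proposition~\ref{prop:cohrig} part~2. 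This shows that $\rho$ defines a cohomologically rigid $\bar \Q_\ell$-point of $M_B^{\rm rig}(X/\C,L)(r)$ for some $L$ of order $d$.

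For the converse, let $\rho_0$ be any such point. Since $M_B^{\rm rig}$ is $0$-dimensional and of finite type over $\Z$, the point $\rho_0$ is defined over a number field, and after pinning down embeddings compatible with those already fixed, it corresponds via the Riemann--Hilbert correspondence to a cohomologically rigid flat connection $(E_0,\nabla_0)\in \sS(r,d)$ on $X/\C$. Theorem~\ref{thm:Fstr} produces an isocrystal $\sF_0\in \sS(s,p,r,d)$ attached to $(E_0,\nabla_0)$; Theorem~\ref{thm:comp}(1) supplies its $\tau$-companion $\sV_0\in \sS(s,\ell,r,d)$; and the direct assertion applied to $\sV_0$ yields a cohomologically rigid $\bar \Q_\ell$-point $\rho_0'$ of $M_B^{\rm rig}$.

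The main obstacle, and the heart of the argument, is the identification $\rho_0\cong \rho_0'$ as $\bar \Q_\ell$-points of $M_B^{\rm rig}$. I would establish this by tracing the two descriptions of the Betti local system of $(E_0,\nabla_0)$ through the compatibilities already in play in Section~\ref{sec:Fstr}. Concretely, Lemma~\ref{lem:F_p-adic} identifies the restriction to $\pi_1(X_{\bar K_s})$ of the Lan--Sheng--Zuo $p$-adic representation of $\sF_0$ with the Higgs bundle of $(E_0,\nabla_0)$ under Faltings's $p$-adic Simpson correspondence (Theorem~\ref{thm:Faltings}); that Higgs bundle in turn matches the Betti local system $\rho_0$ under the classical complex Simpson correspondence. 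Pushing this identification through $\tau$ and restricting along $\pi_1^{\rm top}(X)\to \pi_1(X_{\bar \Q_p})$ yields the required isomorphism $\rho_0'\cong \rho_0$. As a sanity check, the Riemann--Hilbert and companion bijections together with Theorems~\ref{thm:Fstr} and~\ref{thm:comp}(1) force $|\sS(r,d)|=|\sS(s,\ell,r,d)|=|\{\text{cohomologically rigid }\bar \Q_\ell\text{-points}\}|$, so once the compatibility above is in hand, the counting confirms that the construction is surjective and completes the proof.
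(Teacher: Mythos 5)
Your first half is fine and coincides with the paper's argument: cohomological rigidity of $\rho$ is obtained by comparing analytic and \'etale cohomology of $\sE nd^0$ (the paper spells this out by choosing a finite type subring $A\subset \Z_\ell$ over which $\rho$ is defined and a complex embedding of $A$), and irreducibility and the order of the determinant transfer along the dense map $\pi_1^{\rm top}(X)\to\pi_1(X_{\bar\Q_p})$ using Proposition~\ref{prop:cohrig}.

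The converse is where your proposal goes astray. The step you yourself call ``the heart of the argument'' --- the identification $\rho_0\cong\rho_0'$ of the original Betti point with the one produced by running $(E_0,\nabla_0)\mapsto\sF_0\mapsto\sV_0\mapsto\rho_0'$ --- is not provable by the compatibilities you cite, and the paper makes no such claim. Lemma~\ref{lem:F_p-adic} and Faltings's correspondence identify the \emph{crystalline $p$-adic} representation of $\pi_1(X_{\bar K_s})$ with a shifted Higgs bundle; they say nothing about the $\ell$-adic sheaf $\sV_0$. The companion relation between $\sF_0$ and $\sV_0$ is only an equality of characteristic polynomials of Frobenius at closed points of $X_s$ after applying $\tau$; it furnishes no isomorphism of local systems and in particular cannot be ``pushed through $\tau$'' to compare Betti realizations. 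There is no known mechanism identifying the Betti local system underlying $(E_0,\nabla_0)$ with the topological restriction of the companion of its $F$-isocrystal, and the corollary does not assert one: it asserts only surjectivity of the assignment onto the set of cohomologically rigid Betti points. That surjectivity is exactly what your closing ``sanity check'' delivers, and it is the paper's actual proof: the first half gives an injection from the set of companions $\sV$ into the set of cohomologically rigid Betti points, while Theorem~\ref{thm:Fstr} sends the cohomologically rigid connections to pairwise distinct cohomologically rigid isocrystals whose companions are pairwise distinct, so the two finite sets have the same cardinality and the injection is a bijection. Promote that count from a sanity check to the argument, and drop the appeal to Theorem~\ref{thm:comp}(1) when doing so --- its proof in the paper relies on this very corollary, so invoking it here is circular; you only need existence of $\tau$-companions (Abe--Esnault) and the \v{C}ebotarev uniqueness statement that non-isomorphic objects have non-isomorphic companions.
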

\begin{proof}
Let $A\subset \Z_\ell$ be a subring of finite type such that  $\rho$ factors though $\rho_A\colon \pi_1^{\rm top}(X) \to GL(r, A)$, and $\iota\colon A\hookrightarrow \C$ be a complex embedding. Set $\rho_{\C}=\iota \circ \rho_A$. Then 
$$H^1_{\rm an}(X, \Eend^0(\rho_{\C}))= H^1_{\rm an}( X, \Eend^0(\rho_A))\otimes_A \C. $$  Here the subscript $_{\rm an}$ stands for the analytic topology. On the other hand, 
by comparison between analytic and \'etale cohomology, one has $$H^1_{\rm an}(X, \Eend^0(\rho_A)) \otimes_A \Q_\ell=  H^1_{\rm \acute{e}t}(X, \Eend^0(\rho))=H^1(X_{\bar \Q_p},  \Eend^0(\sV_{\bar \Q_p}))=0.$$
This proves the first part. It also shows that for every closed $s \in S$, the number of the $\sV_{\bar s}$ is at most the number of cohomologically rigid connections over the complex variety $X$. 

\medskip

Vice versa, given all cohomologically rigid  points of $\MdR(X/\C,L,r)$, their restrictions to $X_{K(s)}$ for a closed point $s \in S$ define isocrystals with Frobenius structure (Theorem \ref{thm:Fstr}), which are cohomologically rigid  (proof of Proposition \ref{prop:cohrig}), and pairwise different. Thus the $\ell$-companions $\sV_s$ are pairwise different as well. This implies that the number of $\sV_s$ is precisely the number of cohomologically rigid connections and finishes the proof.
\end{proof}

\begin{proof}[Proof of Theorem \ref{thm:comp}.]
As in 
Corollary \ref{cor:cohrig} we have a companion assignement  $\Phi(\tau)\colon \sF \mapsto \sV$ which is injective on isomorphism classes (by \v{C}ebotarev density). By Corollary \ref{cor:cohrig} this assignement is a bijection between the set of isomorphism classes of $\sF$ constructed in Theorem \ref{thm:Fstr} and the set of  isomorphism classes of irreducible $\bar \Q_\ell$-lisse sheaves with determinant $L_s$  with the condition 
$H^1(X_{\bar \Q_p},  \Eend^0(\sV_{\bar \Q_p}))=0.$  This shows 1).
We perform the construction  for $\tau'$, yielding $ \Phi(\tau')\colon \sF\mapsto \sV'$. Thus 
$\Phi(\tau')^{-1}\circ \Phi(\tau)( \sF)$ is a $\sigma$-companion to $\sF$.  This shows 2). As 
for 0),      by Theorem \ref{thm:Fstr}, the cardinality of $\sS(r,d)$ is at most the one of $\sS(s,p,r,d)$ while by 
Corollary \ref{cor:cohrig} $\sS(s,\ell,r,d)$ has at most as many elements as $\sS(r,d)$. This shows 0) and thus finishes the proof.
\end{proof}

\section{Concluding observations} \label{sec:rmks}
\subsection{$SL(3)$-rigid connections} \label{ss:LS}

In \cite{EG17} the authors prove Simpson's integrality conjecture for cohomologically rigid flat connections. It was shown in Langer--Simpson's \cite{LS16} that rigid $SL(3)$-connections with integral monodromy are of geometric origin. Combining the two aforementioned results, one sees that cohomologically rigid $SL(3)$-connections on smooth projective varieties are of geometric origin. There is more that can be said in connection to the $p$-curvature conjecture.

\begin{proposition} \label{rmk:YA}
The $p$-curvature conjecture holds for cohomologically rigid connections of rank $3$ and trivial determinant on smooth projective schemes.
\end{proposition}

\begin{proof}
In \cite[Theorem 16.2.1]{And04}, Andr\'e proved that an irreducible  subquotient of a Gau{\ss}-Manin connection  $f\colon Y\to X$ satisfies the $p$-curvature conjecture if $f$ has one complex fibre with connected motivic Galois group.  The result \cite[Theorem 4.1]{LS16} together with the remarks above 
 imply that cohomologically rigid $SL(3)$-connections all are subquotients of Gau{\ss}-Manin connections coming from families of abelian varieties.  Those have a connected motivic Galois group {(see \cite[Proposition 2]{Sch11})}. We conclude that the $p$-curvature conjecture is true for cohomologically rigid $SL(3)$-connections. 
\end{proof}

\subsection{Vanishing of global symmetric forms}
A  smooth projective variety $X$  without global non-trivial $i$-th symmetric differential for all $i$, has the property that all integrable connections are rigid and have finite monodromy, see \cite[Theorem 0.1]{BKT13}.  The proof uses $p$-adic methods to show integrality, and also positivity theorems, ultimately stemming from complex Hodge theory,  as well as $L^2$-methods.  It would be nice to understand at least part of the theorem in terms of characteristic $p$ methods.

\subsection{Motivicity of the isocrystal from good curves to the whole variety}  \label{ss:abe2} The existence of Frobenius structure implies that the isocrystal defined by an irreducible rigid $(E,\nabla)$ on $X_{K_v}$ is motivic on curves  $C^0_s\hookrightarrow X_s$  where $C\hookrightarrow X$ is a dimension $1$  smooth complete intersection  of ample divisors and $C^0_s\hookrightarrow C_s$ is a dense open. See Corollary \ref{cor:mot}. This raises the problem of  extending  the motivicity from $C_s$ to $X_s$. That is, can one find a morphism $f_s\colon Y_s\to U_s$ {over a dense open $U_s\hookrightarrow X_s$},  which has the property  that an irreducible  isocrystal with Frobenius structure and finite determinant on  $X_s$ is a subquotient of the Gau{\ss}-Manin isocrystal $
R^if_{s*}\sO_{Y_s/ \bar \Q_p}$ for some $i\ge 0$? Beyond the study of rigid connections, it would enable one to make progress on the construction of $\ell$-to $p$-companions (see Section \ref{sec:comp}).

\subsection{Rigid connections and the $p$-curvature conjecture}
  

\begin{proposition} \label{prop:pcurv}
Let $X/\mathbb{C}$ be a smooth projective variety.
Then if  any cohomologically rigid connection $(E,\nabla)$ on $X$ has a  model  $(X_S, (E_S, \nabla_S))$  over a scheme $S$ of finite type  over $\Z$ with $p$-curvature $0$ at all closed points $s \in S$,  the monodromy of all cohomologically rigid connections  $(E,\nabla)$ is finite. 
\end{proposition}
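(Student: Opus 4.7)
The plan is to derive finiteness from the combination of Theorem~\ref{thm:unitary}, the integrality theorem for cohomologically rigid flat connections proven in \cite{EG17}, and a Galois orbit argument applied to the finite set of cohomologically rigid connections. Fix a cohomologically rigid $(E,\nabla)$ and let $\rho\colon \pi_1^{\rm top}(X) \to GL(r,\mathbb{C})$ be its topological monodromy representation. Under the hypothesis of the proposition, Theorem~\ref{thm:unitary} forces $\rho$ to have unitary image, while \cite{EG17} provides integrality, so after conjugation $\rho$ takes values in $GL(r,\mathcal{O}_K)$ for some number field $K$.

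I would next exploit that the set of isomorphism classes of cohomologically rigid connections (with fixed determinant order) is finite and Galois-stable: the moduli space $M^{\rm rig}_{dR}(X/\mathbb{C},L)(r)$ is $0$-dimensional and reduced, so its closed points are defined over $\bar{\mathbb{Q}}$, and $\mathrm{Aut}(\mathbb{C}/\mathbb{Q})$ permutes them while preserving cohomological rigidity (a condition on the tangent space of the moduli point). For every complex embedding $\iota\colon K \hookrightarrow \mathbb{C}$, the composition $\iota \circ \rho$ therefore corresponds to another cohomologically rigid connection, to which the uniform hypothesis of the proposition again applies. Theorem~\ref{thm:unitary} then yields that $\iota \circ \rho$ is unitary for \emph{every} $\iota$.

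I would finally invoke a classical compactness-versus-discreteness argument. The diagonal embedding
$$GL(r,\mathcal{O}_K) \hookrightarrow \prod_{\iota \colon K \hookrightarrow \mathbb{C}} GL(r,\mathbb{C})$$
is discrete by Minkowski, and total unitarity confines the image of $\rho$ to the compact subgroup $\prod_{\iota} U(r)$; a discrete subgroup of a compact group is necessarily finite, so $\rho$ has finite image.

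The main subtlety in this plan is the transfer of the vanishing $p$-curvature hypothesis from $(E,\nabla)$ to each of its Galois conjugates. This is handled by the uniform phrasing of the hypothesis, applied to all cohomologically rigid connections on $X$ simultaneously: once one verifies that Galois conjugacy preserves cohomological rigidity, each $\iota \circ \rho$ automatically inherits vanishing $p$-curvature, and hence unitary monodromy via Theorem~\ref{thm:unitary}. All remaining ingredients — the integrality result of \cite{EG17}, unitarity from Theorem~\ref{thm:unitary}, and the discreteness of $GL(r,\mathcal{O}_K)$ in the product of archimedean completions — are by now standard.
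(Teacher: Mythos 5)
Your proposal is correct and follows essentially the same route as the paper: integrality from \cite{EG17}, the observation that Galois conjugates of a cohomologically rigid connection are again cohomologically rigid (so the uniform hypothesis and Theorem~\ref{thm:unitary} make every conjugate unitary), and then finiteness of a discrete subgroup of a compact group. The only cosmetic difference is that the paper delegates the last step to \cite[Prop.~4.2.1.3]{Kat72} applied to $\bigoplus_\sigma \sigma\circ\rho$ viewed in $GL(r|G|,\Z)$, whereas you spell out the Minkowski-type discreteness-versus-compactness argument directly.
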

\begin{proof}
Let $\rho\colon \pi_1^{\rm top}(X)\to GL(r, \C)$ be a cohomologically rigid representation. By \cite{EG17} the monodromy lies in $GL(r, \sO_L)$ for some number field $L\subset \C$. If $\sigma\colon L\to \C$ is another complex embedding, then the groups $H^1_{\rm an}(X, \Eend^0(\rho)) $ and $ H^1_{\rm an}(X, \Eend^0( \sigma\circ \rho)) $ are equal and thus vanish. Therefore, $\sigma \circ \rho$ is cohomologically rigid. This implies that 
$\bigoplus_{\sigma} \sigma\circ \rho$, where $\sigma$ runs through the Galois group  $G$ of $L$ has monodromy in 
$GL(r|G|, \Z)$ and is unitary by Theorems \ref{thm:unitary}. Thus, applying \cite[Proposition 4.2.1.3]{Kat72}, one concludes that the monodromy of $\bigoplus_{\sigma \in G} \sigma \circ \rho$ is finite. In particular, $\rho$ has finite monodromy. 
\end{proof}

\appendix

\section{Deformations of flat connections in positive characteristic}

{In this appendix we describe an alternative approach to Theorem \ref{thm:nilp} which does not rely on the classical Simpson correspondence. Furthermore, it allows one to deduce stronger statements in the case of cohomological rigidity (see Corollary \ref{cor:p_coh_rigid}).}

Let $Z$ be a smooth projective variety defined over a perfect field $k$ of characteristic $p$.    We denote by $Z'$ is Frobenius twist,  by $F\colon Z\to Z'$ the relative Frobenius.
In the applications, $Z$ is the fibre  $X_s$ at a closed point $s$ of a scheme $S$ of finite type over $\Z$ over which a smooth complex  projective variety $X$ is defined, and $k$ is the residue field of $s$ (which is finite).

\medskip

\begin{remark} \label{rmk:a}
By construction, for $(E,\nabla)$ a rank $r$  integrable connection on $Z$, the point $a\in \A_{Z'}(k)$ from Theorem \ref{thm:BNR} has coordinates $a_i  \in H^0(Z', {\rm Sym}^{i}(\Omega^1_{Z'}))$ from \eqref{1}. 

\end{remark}
  We consider the $\G_m$-action
\ga{5}{ \xymatrix{ \ar[dr] T^*Z' \times_k \G_m \ar[r]^m& T^*Z'\ar[d]^{\pi'}\\
                           & Z'}
}
  defined by  the conic structure on $T^*Z'$.  We  use the notation 
$m\colon V\times_k \G_m\xrightarrow{m} T^*Z'$ for the restriction of $m$ to any subscheme  $V\subset T^*Z'$. For any natural number $n\ge 1$, we define $V_n=V\times_k 
{\Spec}k[t]/(t-1)^n \to V\times_k \G_m$ where $\G_m={\Spec}  k[t,t^{-1}]$. There is a commutative diagram
\ga{6}{ \xymatrix{ \ar[dr] V_n  \ar[r]^m& T^*Z'\ar[d]^{\pi'}\\
                         & Z'.}
}
We also denote by $r\colon V_n\to V$ the retraction obtained via base change from $$\Spec k[t]/(t-1)^n \to \Spec k.$$

\begin{proposition} \label{prop:OV}
Let $Z$ be a smooth projective variety defined over a perfect characteristic $p>0$ field. Let $(E,\nabla)$ be an integrable connection on $Z$ with Hitchin invariant $a=\chi_{dR}((E,\nabla))$. 
If $Z$ lifts to $W_2(k)$ we have an equality of Brauer classes
$$ [m^*\sD_{Z'}] = [r^* \sD_{Z'}] \ \ on  \ \ (Z'_a)_{p}.$$
\end{proposition}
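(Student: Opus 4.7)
My plan is to reduce the proposition to an infinitesimal deformation statement, then use the Frobenius-theoretic identity $(t-1)^p = t^p - 1$ valid in characteristic $p$ to close the argument. Write $s = t-1$, so $(Z'_a)_p = Z'_a \times_k \Spec(k[s]/s^p)$, and the two maps $m, r$ agree modulo $s$. Consequently, the Azumaya algebras $m^*\sD_{Z'}$ and $r^*\sD_{Z'}$ are canonically isomorphic on the zero-th order reduction $\{s = 0\} = Z'_a$; the content of the proposition is that this isomorphism extends to a Brauer equivalence over the $p$-th order thickening. The obstruction to extending order by order in $s$ lives in a sheaf cohomology group controlled by the infinitesimal generator of the $\G_m$-action on $\sD_{Z'}$.

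The decisive observation is that, under the $\G_m$-scaling on $T^*Z'$, the Azumaya algebra $\sD_{Z'}$ transforms with a Frobenius-twisted equivariance. On the side of $D_Z$ this is visible from the Rees picture for the order filtration $R = \bigoplus_i F_iD_Z\cdot \hbar^i$: the center of $D_Z$ is generated by $p$-th powers of derivations (via the $p$-curvature), and these have weight $p$ under the $\G_m$-action. Bezrukavnikov--Braverman globalize this to produce a $\G_m$-equivariant family of Azumaya algebras $\widetilde{\sD}_\hbar$ on $T^*Z'$ whose Brauer-theoretic discrepancy under scaling is governed by $t^p$. On $(Z'_a)_p$ one has $t^p = (1+s)^p = 1 + s^p = 1$, so this Frobenius-twist is trivialized, and the Morita equivalence between $r^*\sD_{Z'}$ (at $t = 1$) and $m^*\sD_{Z'}$ (at general $t$) extends over the entire $p$-th order thickening.

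To make this rigorous I would couple the abstract weight argument with the explicit Ogus--Vologodsky splitting on the $(p-1)$-st order neighborhood of the zero section $Z' \hookrightarrow T^*Z'$, which is available precisely because of the $W_2(k)$-lifting hypothesis. Concretely, for each point $v$ of $Z'_a$, I would interpolate along the affine line $\{t v : t \in \mathbb{A}^1\}$ using the OV trivialization as a reference frame near $t = 0$, then propagate it to a neighborhood of $t = 1$ by means of the $\G_m$-equivariance of the Rees family. The resulting compatibility data assembles into an invertible $r^*\sD_{Z'}$-$m^*\sD_{Z'}$-bimodule on $(Z'_a)_p$, which realizes the desired equality of Brauer classes.

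The main obstacle I expect is to precisely match the two competing gradings: the natural weight $1$ on the center $\mathcal{O}_{T^*Z'}$ of $\sD_{Z'}$ versus the weight $p$ coming from the center of $D_Z$ through the Frobenius $F \colon Z \to Z'$. Reconciling these requires a careful bookkeeping of the BNR correspondence in its $\G_m$-equivariant form, and I expect this to be the source of the specific bound $(Z'_a)_p$ (rather than a smaller neighborhood such as $(Z'_a)_{p-1}$) appearing in the statement. Once this compatibility is established, the conclusion follows from the purely formal identity $t^p = 1$ in $k[s]/(s^p)$.
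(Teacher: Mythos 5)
There is a genuine gap at the core of your argument. The mechanism you propose as decisive --- that the Brauer discrepancy of $\sD_{Z'}$ under the scaling $m$ is ``governed by $t^p$'', so that $t^p=(1+s)^p=1$ in $k[s]/(s^p)$ trivializes it --- is unsubstantiated and conflates two different torus actions. By \cite[Prop.~4.4]{OV07} the Brauer class of $\sD_{Z'}$ is $\phi(\theta)$, where $\theta$ is the tautological $1$-form on $T^*Z'$ and $\phi$ is the connecting homomorphism of the four-term \'etale sequence built from $d\log$ and $w^*-C$. Under the fibrewise scaling used in the proposition the tautological form has weight \emph{one}, $m^*\theta=t\theta$, so the discrepancy is $\phi(m^*\theta-r^*\theta)=\phi((t-1)\theta)$; no $t^p$ appears a priori. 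The weight-$p$ phenomenon you invoke (the centre of $D_Z$ being generated by $p$-th powers of derivations) belongs to the Rees/order filtration, i.e.\ to the $\hbar$-degeneration of $\sD$, which is a different $\G_m$-action from the one in the statement; this is exactly the ``bookkeeping of competing gradings'' that you flag as the main obstacle and leave unresolved. A quick sanity check that the formal identity $t^p=1$ cannot be the whole story: if it were, the $W_2(k)$-lifting hypothesis would be superfluous, whereas it is essential to the statement. Your interpolation step has a second problem: the Brauer class of $m_t^*\sD_{Z'}$ is \emph{not} locally constant in $t$ along the lines $\{tv\}$ (this is precisely why the conclusion holds only on the $p$-th order neighbourhood of $t=1$ and not on $\G_m$), and the spectral cover $Z'_a$ need not meet any nilpotent neighbourhood of the zero section (the proposition is interesting exactly when the $p$-curvature is not nilpotent), so there is no Ogus--Vologodsky ``reference frame'' near $t=0$ to propagate; as described, that step is circular.

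The paper's proof is a short direct cohomological computation with the ingredients you partially identified, assembled differently. Additivity of $\phi$ gives $[m^*\sD_{Z'}]-[r^*\sD_{Z'}]=\phi\bigl((t-1)\theta\bigr)$; since $(t-1)\theta$ dies on the reduced fibre over $t=1$, this class is identified with a pullback of $\phi(\theta|_{Z'_p})$, i.e.\ of the Brauer class of $\sD_{Z'}$ restricted to the nilpotent neighbourhood of the zero section $Z'\hookrightarrow T^*Z'$; and that restricted class vanishes by \cite[Cor.~2.9]{OV07}, which is where --- and only where --- the $W_2(k)$-lift enters. If you want to keep your bimodule formulation, the right way to upgrade the class equality is not interpolation but the canonical equivalences $\mathsf{QCoh}(\sD_{\theta_1+\theta_2})\cong\mathsf{QCoh}(\sD_{\theta_1}\otimes\sD_{\theta_2})$ of \cite[Prop.~3.11, Cor.~3.12]{BB07} combined with the Ogus--Vologodsky splitting module, as in the remark following the proposition in the appendix.
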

\begin{proof}
By \cite[Proposition 4.4]{OV07}, the Brauer class $[\sD_{Z'}]\in H^2_{\rm \acute{e}t} (T^*Z', \sO^\times_{T^*Z'})$ of the Azumaya algebra $\sD_{Z'}$ is the image $\phi(\theta)$ of the tautological one-form $$\theta \in H^0(T^*Z', \pi^{*}\Omega^1_{Z'}) \subset H^0(T^*{Z'}, \Omega^1_{T^*Z'}) $$  (see Remark \ref{rmk:spec})  by the connecting homomorphism of the \'etale exact sequence
\ga{6}{0\to \sO^\times_{T^*Z'} \to F_*\sO^\times_{T^*Z'}   \xrightarrow{d\log} F_*Z^1(\Omega^1_{T^*Z'}) \xrightarrow{ w^*-C}  \Omega^1_{T^*Z'}\to 0}
on $T^*Z'$, where we use that $T^*Z'$ is the Frobenius twist of $T^*Z$ via $w\colon T^*Z'\to T^*Z$. Recall that $C$ denotes the Cartier operator, $F\colon T^*Z \to T^*Z'$ the relative Frobenius homomorphism, and $Z^1(\Omega^1_{T^*Z'})$ is the sheaf of closed $1$-forms.  We now replace  $T^*Z'$ and $T^*Z$ in \eqref{6} by their product with $\G_m$ over $k$, and the differential forms over $k$ by the ones  over $\G_m$ of the product  varieties.
This yields on $(Z'_a)_p$ 
\ga{}{ m^*\sD_{Z'}=\phi(m^*\theta),  \  r^*\sD_{Z'}=\phi(r^*\theta). \notag}
On the other hand, 
\ga{}{ m^*\theta-r^*\theta \in {\rm Ker}\big( H^0((Z'_a)_p, \Omega^1_{(Z'_a)_p/\Spec k[t]/(t-1)^p} )\to H^0(Z'_a, \Omega^1_{Z'_a})\big).\notag}
Thus, $m^*\theta-r^*\theta $ has support in $Z'_p \hookrightarrow T^*Z\times_k \G_m'$, where $Z'\hookrightarrow T^*Z'$ is the zero-section. We conclude  
\ga{}{ \phi( m^*\theta-r^*\theta ) \in 
H^2_{\rm \acute{e}t}(Z'_p, \sO^\times_{Z'_p}).\notag} 
Hence we have  
\ga{}{\phi( m^*\theta-r^*\theta )=
\phi( m^*\theta|_{Z'_p}-r^*\theta|_{Z'_p} )=m^*\phi(\theta|_{Z'_p})-r^*\phi^*(\theta|_{Z'_p})
 \in 
H^2_{\rm \acute{e}t}(Z'_p, \sO^\times_{Z'_p}).\notag}
As $k$ is perfect, a lifting of $Z$ to $W_2(k)$ is equivalent to a lifting of $Z'$ to $W_2(k)$. 
By \cite[Cor. 2.9]{OV07}, one has the vanishing
\ga{}{ \phi( \theta|_{Z'_p})=0. \notag}
This finishes the proof.
\end{proof}
\begin{remark}
One can show that the choice of a $W_2$-lift in Proposition \ref{prop:OV} above, induces a canonical equivalence of categories 
\begin{equation}\label{ov}\mathsf{QCoh}_{(Z'_a)_p}(m^*\sD_{Z'}) \cong \mathsf{QCoh}_{(Z'_a)_p}(\sD_{Z'}).\end{equation} To see this one applies \cite[Proposition 3.11]{BB07} instead of \cite[Proposition 4.4]{OV07} in the argument above (note that the authors of {\it loc. cit.} call two Azumaya algebras equivalent, if their categories of modules are equivalent).
 Applying \cite[Cor 3.12]{BB07} to the special case of a diagonal morphism, we obtain that for $1$-forms $\theta_1,\theta_2$ on $Z'$ we have a canonical equivalence of categories $\mathsf{QCoh}(\sD_{\theta_1 + \theta_2}) \cong \mathsf{QCoh}(\sD_{\theta_1} \otimes \sD_{\theta_2})$. Putting these two refined assertions together, and evoking the splitting associated by Ogus--Vologodsky to a $W_2$-lift \cite[Cor. 2.9]{OV07} we obtain a canonical equivalence of categories as in \eqref{ov}.
\end{remark}

We denote by $\mu\colon \A_{Z'}\times_k \G_m \to \A_{Z'}$ the action defined for $t\in \G_m$ by multiplication by $t^i$ on $H^0(Z',  {\rm Sym}^i \Omega^1_{Z'})$.
\begin{thm} \label{thm:p_def}
Let $Z$ be a smooth projective variety defined over a perfect characteristic $p>0$ field $k$. Let $(E,\nabla)$ be an integrable connection on $Z$ with Hitchin invariant $a=\chi_{dR}((E,\nabla))$. 
Then 
 assuming that $Z$ lifts to $W_2(k)$,
there exists an integrable connection on $Z\times_k  T$, for $T={\Spec}k[t]/(t-1)^p$, with spectral cover $(Z'_a)_p \xrightarrow{m} T^*Z' \times_k T$, with Hitchin invariant $\mu(a \times_k T )$,  and which restricts to $(E,\nabla)$ on $Z'_a\hookrightarrow T^*Z'$. 
\end{thm}
\begin{proof}
Let $M$ be the {$p_{T^*Z'}\sD_{Z' \times_k T}|_{Z'_a}$-module} associated to $(E,\nabla)$ via the correspondence of Theorem \ref{thm:BNR}.
Then $r^*M$ is a $p^*_{T^*Z'}r^*\sD_{Z'}$-module, thus by Proposition \ref{prop:OV}, $r^*M$ can be viewed as a  $ p^*_{T^*Z'}m^*\sD_{Z'}$-module on $(Z'_a)_p$ (non-canonically). We apply again Theorem \ref{thm:BNR} to conclude to the existence of  an integrable connection on $Z\times_k T$ with the properties of the theorem. This finishes the proof. 
\end{proof}

\begin{remark}\label{rmk:alternative}
Theorem \ref{thm:p_def} gives an alternative proof of Theorem \ref{thm:nilp}  which states that a complex irreducible flat connection $(E_{\mathbb{C}},\nabla_{\mathbb{C}})$, has a model $X_S$ over a finite type scheme $S$ such that the $p$-curvatures at all closed points $s \in S$ are nilpotent. Indeed, by virtue of Theorem \ref{thm:p_def}, every flat connection defined over a characteristic $p$ variety with non-nilpotent $p$-curvature, has a non-trivial deformation of an order which grows linearly with $p$. For $p >> 0$ this exceeds the bound $D$ exhibited Corollary \ref{cor:ndR}.
\end{remark}

We are grateful to one of the anonymous referees for pointing out the following interesting perspective on the material contained in this appendix:

\begin{remark}
Using the methods of \cite{OV07} one can show that the $W_2$-lift of $X_s$ yields a canonical action of $\G_m^{\sharp}$, the PD hull of the neutral element of $\G_m$, on the category $\MIC(X)$. This $\G_m^{\sharp}$-action can be viewed as a de Rham analogue of the $\G_m$-action on the moduli space of Higgs bundles. 
\end{remark}

We remark that the viewpoint on Theorem \ref{thm:nilp} described above still leaves it open whether rigid flat connections have nilpotent $p$-curvature in the case of small primes $p$. For cohomologically rigid flat connections more can be said. The following corollary has been pointed out to us by Y. Brunebarbe.
\begin{corollary}\label{cor:p_coh_rigid}
Let $k$ be a perfect field of positive characteristic $p > 2$ and $Z/k$ a smooth projective $k$-variety with lifts to $W_2(k)$. Let $(E,\nabla)$ be a stable cohomologically rigid flat connection on $Z$. Then $\nabla$ has nilpotent $p$-curvature.
\end{corollary}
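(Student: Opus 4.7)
The plan is to apply Theorem~\ref{thm:gro} to produce a specific infinitesimal deformation of $(E,\nabla)$ whose Hitchin invariant scales non-trivially under the $\mathbb{G}_m$-action on $\mathbb{A}_{Z',r}$, and then use cohomological rigidity to force that deformation to be constant. Vanishing of the Hitchin invariant $a := \chi_{dR}([(E,\nabla)])$ will then be read off from the identity of two $T$-points of the Hitchin base, and by definition a zero Hitchin invariant means nilpotent $p$-curvature.

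More precisely, let $a_i \in H^0(Z',\Sym^i\Omega^1_{Z'})$ for $2\le i\le r$ denote the components of $a$. Theorem~\ref{thm:gro} produces an integrable connection $(\widetilde E,\widetilde \nabla)$ on $Z\times_k T$, with $T=\Spec(k[s]/s^p)$ and $s=t-1$, which restricts to $(E,\nabla)$ at $s=0$ and whose Hitchin invariant is $\mu(a\times_k T)$, i.e.\ has $i$-th component $(1+s)^i a_i$ in $H^0(Z',\Sym^i\Omega^1_{Z'})\otimes_k k[s]/s^p$. On the other hand, cohomological rigidity $H^1(Z,\mathcal{E}nd^0(E,\nabla))=0$, combined with stability, says that $[(E,\nabla)]$ is a smooth isolated point of $M_{dR}(Z,L)(r)$ (or of the associated projective moduli, if one wants to finesse the determinant issue). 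Hence the classifying morphism $T\to M_{dR}$ attached to $(\widetilde E,\widetilde\nabla)$ is constant, and comparing Hitchin invariants yields
\[ \bigl((1+s)^i - 1\bigr)\, a_i \;=\; 0 \quad \text{in } H^0(Z',\Sym^i\Omega^1_{Z'})\otimes_k k[s]/s^p, \qquad 2\le i\le r. \]

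The final step is a small arithmetic computation in $k[s]/s^p$: for any $i$ with $p\nmid i$, the element $(1+s)^i-1$ has leading term $is$ with $i\neq 0$ in $k$, so it is a unit multiple of $s$ and in particular a non-zero-divisor in $k[s]/s^p$, forcing $a_i=0$. When $p>r$ every index $2\le i\le r$ satisfies $p\nmid i$, so all $a_i$ vanish and the $p$-curvature of $\nabla$ is nilpotent as claimed.

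The step I expect to be the main obstacle is the residual case $p\le r$: for indices $i$ divisible by $p$ one has $(1+s)^i = (1+s^p)^{i/p} = 1$ in $k[s]/s^p$, and the displayed identity becomes vacuous for these components, so the vanishing of $a_i$ is not forced by a single application of Theorem~\ref{thm:gro}. Overcoming this would require either an enhancement of Theorem~\ref{thm:gro} producing deformations beyond the $p$-th order infinitesimal neighborhood of $1\in\mathbb{G}_m$ (where the $\mathbb{G}_m$-action does act non-trivially on weight-$i$ symmetric forms with $p\mid i$), or an inductive argument combining the partial vanishing $a_i=0$ for $p\nmid i$ with the Cartier transform or Frobenius descent to treat the remaining components.
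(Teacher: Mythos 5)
The paper in fact states this corollary with no proof at all --- it is attributed to Y.~Brunebarbe and left as an immediate consequence of Theorem~\ref{thm:gro} --- and your deduction is exactly the intended one: Theorem~\ref{thm:gro} gives a deformation of $(E,\nabla)$ over $T=\Spec(k[t]/(t-1)^p)$ with Hitchin invariant $\mu(a\times_k T)$; cohomological rigidity forces any such Artinian deformation (of the associated $PGL(r)$-object, which finesses the determinant) to be constant; hence $\mu(a\times_kT)=a\times_kT$, i.e.\ $(t^i-1)a_i=0$ for all $i$, and comparing the coefficient of $s=t-1$ gives $i\,a_i=0$, so $a_i=0$ whenever $p\nmid i$. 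One small slip: a unit multiple of $s$ is \emph{not} a non-zero-divisor in $k[s]/s^p$ (it kills $s^{p-1}$); the correct reason $a_i=0$ follows is that $a_i\otimes 1$ is the degree-zero coefficient of an element of the free module $H^0(Z',\Sym^i\Omega^1_{Z'})\otimes_k k[s]/s^p$, and $s\cdot(a_i\otimes 1)=a_i\otimes s$ vanishes only if $a_i=0$. Equivalently, just read off the $s^1$-coefficient of $(t^i-1)a_i$ directly.

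Your flagged obstacle in the range $p\le r$ is genuine and is not addressed anywhere in the paper: since $t^{pj}=((t-1)+1)^{pj}\equiv 1 \bmod (t-1)^p$ in characteristic $p$, the weight-$pj$ components of the Hitchin base are fixed by the $\G_m$-action to order $p$ at $t=1$, which is precisely the order of the deformation that Proposition~\ref{prop:OV} (splitting of $\sD_{Z'}$ only on the $(p-1)$-st infinitesimal neighbourhood of the zero section) allows Theorem~\ref{thm:gro} to produce. So the argument establishes the corollary only when every index $2\le i\le r$ with $a_i\ne 0$ is prime to $p$ --- in particular whenever $p>r$ --- and the corollary as stated, with no hypothesis relating $p$ and $r$, is not fully justified by the appendix. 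This should be read as a gap (or at least an unstated restriction on $p$) in the paper's own claim rather than a defect of your reconstruction; your proposed remedies (a longer deformation, or a supplementary Frobenius-descent argument for the components $a_{pj}$) are the natural directions, but neither is supplied by the paper.
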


An interesting aspect of this corollary is that we only have to assume liftability of $Z$ to $W_2(k)$, and it even applies to small primes $p$. This begs the question whether the same property holds true for all rigid flat connections in positive characteristic, 
without the cohomological assumption, and even without the liftability assumption.

\begin{question}
Let $k$ be a perfect field of positive characteristic $p$ and $Z/k$ a smooth projective $k$-variety. Let $(E,\nabla)$ be an irreducible  stable rigid flat connection of rank $r$ on $Z$. Is the $p$-curvature of  $\nabla$ nilpotent?
\end{question}

\medskip


\end{document}